\titleformat{\subsection}{\it}{\thesubsection.\enspace}{1pt}{}
\newtheorem{theo}{Theorem}[section]
\newtheorem{lemm}[theo]{Lemma}
\newtheorem{defi}[theo]{Definition}
\newtheorem{prop}[theo]{Proposition}
\newtheorem{rema}[theo]{Remark}
\numberwithin{equation}{section}
\begin{document}
\title{Global regularity and optimal decay estimates of
	large solutions to the compressible FENE system
	\hspace{-4mm}
}

\author{Zhaonan $\mbox{Luo}^1$ \footnote{Email: luozhn@fudan.edu.cn},\quad
	Zhiying $\mbox{Meng}^2$\footnote{E-mail:  47-368@gduf.edu.cn} \quad and\quad
	Zhaoyang $\mbox{Yin}^{3}$\footnote{E-mail: mcsyzy@mail.sysu.edu.cn}\\
$^1\mbox{School}$ of Mathematical Sciences, Fudan University, Shanghai 200433, China.\\
$^2\mbox{Department}$ of Applied Mathematics, Guangdong University of Finance, \\
Guangzhou, 510521, PR China\\
$^3\mbox{School}$ of Science, Shenzhen Campus of Sun Yat-sen University, \\Shenzhen 518107, China}

\date{}
\maketitle
\hrule

\begin{abstract}
	In this paper, we are concerned with the compressible FENE dumbbell model. By virtue of the dissipative structure and the interpolation method, we firstly prove global regularity in $H^2$ framework for the compressible FENE system with some large data. Then, we obtain optimal
	decay estimates of large solutions in $H^1$ and remove the smallness assumption of low frequencies by virtue
	of the Fourier splitting method and the Littlewood-Paley decomposition theory. Furthermore, we
	establish optimal decay rate for the highest derivative of the solutions by a different method combining
	time frequency decomposition and the time weighted energy estimate.
	These obtained results generalize and cover the classical results
	of the incompressible FENE dumbbell model. 
	
\vspace*{5pt}
\noindent {\it 2020 Mathematics Subject Classification}: 35Q30, 35Q84, 76B03, 76D05.
	
	\vspace*{5pt}
	\noindent{\it Keywords}: The compressible FENE dumbbell model; Large solutions; Global regularity; Optimal decay estimates.
\end{abstract}

\vspace*{10pt}

\tableofcontents

	\section{Introduction}
	
	$~~$The paper is devoted to the study of the following compressible FENE system \cite{Bird1977,LZNcom.co.small.2021Global}:
		\begin{align}\label{eq0}
		\left\{
		\begin{array}{ll}
			\varrho_t+{\rm div}(\varrho u)=0 , \\[1ex]
			(\varrho u)_t+{\rm div}(\varrho u\otimes u)-{\rm div}\Sigma{(u)}+ \nabla{P(\varrho)}= {\rm div}\tau, \\[1ex]
			\psi_t+u\cdot\nabla\psi={\rm div}_{R}[- \sigma(u)\cdot{R}\psi+\nabla_{R}\psi+\nabla_{R}\mathcal{U}\psi],  \\[1ex]
			\tau_{ij}=\int_{B}(R_{i}\nabla_{Rj}\mathcal{U})\psi dR, \\[1ex]
			\varrho|_{t=0}=\varrho_0,~~u|_{t=0}=u_0,~~\psi|_{t=0}=\psi_0, \\[1ex]
			(\nabla_{R}\psi+\nabla_{R}\mathcal{U}\psi)\cdot{n}=0 ~~~~ \text{on} ~~~~ \partial B(0,R_{0}).\\[1ex]
		\end{array}
		\right.
	\end{align}
In \eqref{eq0}, the unknown of this model $(\varrho,u)=(\varrho,u)(t,x),\psi=\psi(t,x,R)$ denote respectively  the density of the solvent, velocity of the polymeric liquid and distribution function of the internal configuration, where $x\in \mathbb{R}^d,d=\{2,3\}$ and $R\in B \triangleq B(0,R_0)$ denotes the dumbbell extension.
Here the stress tensor $\Sigma(u)$, the pressure $P(\varrho),$ the potential of dumbbell force $\mathcal{U}$  and the additional stress tensor $\sigma(u)$ are given by 
\begin{align*}
	&\Sigma(u)=\mu(\nabla u+\nabla^{\mathrm{T}} u)+\mu'{\rm div}u\cdot Id,\\
	&P(\varrho)=a\varrho^{\gamma}~  with ~\gamma\geq 1,a>0,\\
	&\mathcal{U}(R)=-k\log(1-(\frac{|R|}{|R_{0}|})^{2})~ for~k>0,\\
	 & \sigma(u)=\frac{\nabla u-(\nabla u)^{\mathrm{T}}}{2}~(co\text{-}rotational~case)~or~ \sigma(u)=\nabla u ~(general~case),
\end{align*}
with the viscosity coefficient $\mu,\mu'$  satisfy the condition $2\mu+\mu'>0,\mu>0.$
 For notational simplicity, we take $a=\gamma=R_{0}=1$ in this article. For a detailed introduction of physical and mechanical background for \eqref{eq0}, one  can refer to \cite{Bird1977,Doi1988}.

	The system \eqref{eq0} has a trivial solution $(\varrho,u,\psi)$ with $(\varrho,u)=(1,0)$ and $$\psi_{\infty}(R)=\frac{e^{-\mathcal{U}(R)}}{\int_{B}e^{-\mathcal{U}(R)}dR}=\frac{(1-|R|^2)^k}{\int_{B}(1-|R|^2)^kdR}.$$  
Therefore, the purpose of this article is the study of the perturbations near the global equilibrium. Let
$(\rho,u,g)=(\varrho-1,u,\frac {\psi-\psi_\infty} {\psi_\infty})$ and $h(\rho)=\frac{\rho}{\rho+1}.$

In general case, the system \eqref{eq0} is now reduced to the following system for $(\rho,u,g)$:
\begin{align}\label{eq1}
\left\{
\begin{array}{ll}
	\rho_t+{\rm div}u=F, \\[1ex]
	u_t+\nabla \rho-{\rm div}\Sigma(u)-{\rm div}\tau=H, \\[1ex]
	g_t+\mathcal{L}g+{\rm div}u-\nabla uR\nabla_R\mathcal{U}=G,  \\[1ex]
	\tau_{ij}(g)=\int_{B}(R_{i}\nabla_{Rj}\mathcal{U})g\psi_\infty dR, \\[1ex]
	\rho|_{t=0}=\rho_0,~~u|_{t=0}=u_0,~~g|_{t=0}=g_0, \\[1ex]
	\psi_\infty\nabla_{R}g\cdot{n}=0 ~~~~ \text{on} ~~~~ \partial B(0,1) ,\\[1ex]
\end{array}
\right.
\end{align}
where  
\begin{align*}
	&F=-{\rm div}(\rho u),~~ H=-u\cdot\nabla u-h(\rho) {\rm div}\Sigma{(u)}+h(\rho)\nabla \rho-h(\rho) {\rm div}\tau, \\
	&G=-u\cdot\nabla g-\frac 1 {\psi_\infty}\nabla_R\cdot(\sigma(u) Rg\psi_\infty),~~\mathcal{L}g=-\frac 1 {\psi_\infty}\nabla_R\cdot(\psi_\infty\nabla_{R}g).
\end{align*}

		\textbf{(1) Short reviews for the incompressible FENE model}
	
	With potential $\mathcal{U}(R)=(1-|R|^2)^{1-\sigma}$ for $\sigma>1$, the first local well-posedness result in Sobolev spaces was established by  M. Renardy \cite{Renardy}. For the co-rotation case in two dimensions, F. Lin, P. Zhang, and Z. Zhang \cite{F.Lin} obtained a global existence results $k > 6$. In \cite{Masmoudi2008}, N. Masmoudi proved the global well-posedness of the  general FENE model near equilibrium with $k>0.$ 
	In the co-rotation case with $d=2$, the author \cite{Masmoudi2008} studied a global result for $k>0$ without any small conditions.  However, the global existence result with large initial data to the
	general FENE dumbbell model is still open (see \cite{InF2020,Masmoudi2016}). In \cite{Masmoudi2013}, N. Masmoudi established the global existence of weak solutions in $L^2$ under some entropy conditions.

	In a recent work, the $L^2$ decay of the velocity for the co-rotation
	FENE dumbbell model was proved by M. Schonbek \cite{Schonbek} and she established the
	decay rate $(1+t)^{-\frac{d}{4}+\frac{1}{2}}$, $d\geq 2$ with initial data  $u_0\in L^1$.
	On the other hand, she gave a conjecture with the sharp decay rate should be $(1+t)^{-\frac{d}{4}}$,~$d\geq 2$. By reason of the additional stress tensor, see \cite{Schonbek1985}, she could not verify the conjecture by using the bootstrap
	argument. Based on  M. Schonbek's work,   W. Luo and Z. Yin \cite{Luo-Yin,Luo-Yin2} improved Schonbek's result
	and showed that the decay rate is $(1+t)^{-\frac{d}{4}}$ with $d\geq 2$.
	This result showed that M. Schonbek's conjecture is true when $d \geq 2$. The optimal time decay rate  was proved by  \cite{Jia-Ye}  for a different method.
	
	\textbf{(2) Short reviews for the compressible Navier-Stokes (CNS) equations}
	
	When $\psi\equiv 0$, the system becomes the CNS equations. Now, we present some results about the CNS equations. J. Nash firstly proved the local existence and uniqueness \cite{miaocompressns23} for smooth initial data without vacuum. The global well-posedness near equilibrium in three dimension was established by A. Matsumura and T. Nishida \cite{Matsumura} for smooth data. Scaling invariance plays a fundamental role in the study of the CNS equations. R. Danchin \cite{miaocompressns11,miaocompressns155}  established a lot of global existence and uniqueness results in the "critical spaces".

	On the other hand, the long time behaviour of solutions for the CNS equations has
	many mathematical results. A. Matsumura and T. Nishida obtained the large time behaviour of the global solutions $(\rho,u)$ of the $3d$ CNS equations, see \cite{Matsumura}. 
	Recently, for $d=3$, H. Li and T. Zhang \cite{Li2011Large} proved the optimal time decay rate, which depends on  spectrum analysis in Sobolev spaces. For $d=2,$ the large time behaviour in the critical Besov space  was presented recently by R. Danchin  and J. Xu \cite{2016Optimal}. J. Xu \cite{Xu2019} obtained the optimal time decay rate in some Besov spaces with negative index, which requires the smallness of low-frequency. Very recently, Z. Xin  and J. Xu \cite{2018Optimal} studied the large time behaviour without the smallness restriction of low frequencies.
	
	\textbf{(3) The compressible FENE system}
	
	For the co-rotation case, Z. Luo, W. Luo  and Z. Yin \cite{LZNcom.co.small.2021Global} obtained the global existence ($d\geq 2$) and the $L^2$ optimal decay rate of strong  solutions, which requires small initial data   $(\rho_0,u_0,g_0)\in H^s\times H^s\times H^s(\mathcal{L}^2), s>1+\frac d2$. On the other hand, 
	the authors \cite{LZN.com.2021Global} also studied the optimal $H^1$ decay rate of global strong solutions
	for the general compressible FENE model  with small data and $d\geq 2$. 
	It is worth noting that the condition in \cite{LZN.com.2021Global} of the optimal  $L^2$ decay rate of solutions is weaker than the condition in \cite{LZNcom.co.small.2021Global}. 
	
	To our best knowledge, global critical regularity and optimal decay estimates for the compressible FENE system with large initial data have not been studied yet. This problem is interesting and more difficult than the case with small data. In this paper, we are devoting to the study of  the global regularity and optimal decay estimates of the compressible FENE system with some large data in the  framework of $H^2(\mathbb{R}^d),d\in\{2,3\}$. For the  case $d=2,$ 
	the key point is to prove a new global priori estimate for \eqref{eq1} with some large data. Using  interpolation method and the cancellation relation between the CNS equations and Fokker-Planck equation, we obtain a global priori estimate. To obtain optimal time decay rate in $H^2$, we give a more precise higher order derivatives estimate in the proof of global existence for strong solutions to \eqref{eq1}. Similarly, for the case  $d=3,$ we can  obtain the global existence results of the solutions (see Remark \ref{Re1}) and the optimal decay rate of each derivative of the solutions in $L^2$ 
	(see Remark \ref{gg}). 
		
	For the critical case $d=2$, optimal decay estimates for the compressible FENE system \eqref{eq1} with large data has not been studied yet. We now develop a universal method to obtain optimal $H^1$ decay rate with large data. Firstly, by using the method in \cite{Luo-Yin}, we  prove logarithmic decay rate of solutions. The main obstacle  is obtaining the initial polynomial decay rate. By virtue of the time weighted energy estimate and logarithmic decay rate, we improve initial polynomial decay rate for the solutions in $H^1$. This new Fourier splitting method plays a key role in the proof of optimal time decay rate. By using the time decay rate and the estimate of the stress tensor $\tau$ with $k>0$, we obtain $\|u\|_{L^\infty(0,\infty; \dot{B}^{-1}_{2,\infty})}\leq C$. Finally, we obtain optimal time decay estimates for the solutions in $H^1$. Furthermore, we establish optimal decay rate  of the solutions in $\dot{H}^2$ by a different method combining
	time frequency decomposition and the time weighted energy estimate. On the other hand, by applying the optimal decay rate of $\|\nabla u\|_{H^1}$, we  improve the decay rate of $\|\nabla^s g\|_{L^2(\mathcal{L}^2)}$ to $(1+t)^{-\frac{2+s}{2}}$ with $s\in [0,1].$ For the case $d=3,$ the proof of  optimal decay rate of solutions with large data is simpler, so we omit it.
		
	Compared with previous work, the new ingredients in
	our paper are as follows:
	
	$({\rm i})$ Expanding the range of initial value corresponding to the global solutions. Compared with the co-rotation incompressible FENE model, the difficulty of the  compressible FENE model  is the lack of conservation law. By using  the dissipative structure of $(\rho,u,g)$, the continuity argument and the special interpolation method, we obtain global existence of the  compressible FENE system. Let $d=\{2,3\}$ and the critical regularity with $\beta=2.$ Then the initial data satisfy the following conditions for $\epsilon$ small enough
	\begin{align*}
		E_1(0)+E_2(0)\leq M,~~E_0(0)+E_1(0)\leq \epsilon(M),
	\end{align*} 
	which implies that the $H^2$ norm of initial data is large. It is worth mentioning that  we obtain the first result of  global existence of the compressible FENE model with large initial value. Moreover, we get the following two closed energy estimates:
	\begin{align*} 
		&E_0(t)+E_1(t)+\eta(u,\nabla\rho)+\int_0^tD_0(t)+D_1(t)+2\eta\|\nabla\rho\|^2_{L^2}dt'\leq C (E_0(0)+E_1(0)),\\
		&E_2(t)+\eta(\nabla u,\nabla^2\rho)+\int_0^tD_2(t)+2\eta\|\nabla^2\rho\|^2_{L^2}dt'\leq C (E_1(0)+E_2(0)).
	\end{align*}
	
	$({\rm ii})$ Optimal decay rate for the highest derivative of the solutions. Owing to $E_2+\eta(\nabla u,\nabla^2 \rho)\ncong E_2$ with $\eta$  small enough, we can not obtain decay rate
	of $(\rho,u)$ in $\dot{H}^2$  by virtue of the same approach as in $H^1.$ 
	By the improved Fourier splitting method,  the time weighted energy estimate and the optimal decay rate of $(\rho,u)$ in $H^1$, we prove the optimal decay rate for  the highest derivative of  $(\rho,u)$ in $L^2.$ In addition, one also see that  the decay rate of $\nabla^{\beta}g$ in $L^2(\mathcal{L}^2)$ with $\beta\in[0,2]$ is the same as that of $\nabla^{\beta}\nabla u.$ 
	 Note that the result of optimal decay rate for the highest derivative is novel.

	The article has the following structure. In Section \ref{sec2}, we give some notations and main results. In Section \ref{sec3}, we are  devoted to the study of  global regularity of solutions to \eqref{eq0} and $H^2$ decay estimates for the compressible FENE dumbbell system with some large data.
	
	\section{\textbf{Notations and main results}}\label{sec2}
	
	$~~$In this section we will introduce some notations that will be used in the
sequel and present our main results as follows. 

	\subsection{\textbf{Notations and functional spaces}} 
We give the following notations and  some functional spaces.
	
	$(1)~\nabla u ({\rm div}u)$ represents the gradient(divergence) of $u$ with respect to the variable $x;$

	 $(2)~\hat{f}=\mathcal{F}(f)$ is the Fourier transform of $f$ and $\Lambda^sf=\mathcal{F}^{-1}(|\xi|^s \widehat{f} );$ 
	
	$(3)~	\mathcal{L}^{p}=\big\{f \big|\|f\|^{p}_{\mathcal{L}^{p}}=\int_{B} \psi_{\infty}|f|^{p}dR<\infty\big\};$
	
	$(4)~L^{p}(\mathcal{L}^{q})=L^{p}[\mathbb{R}^{d};\mathcal{L}^{q}]=\big\{f \big|\|f\|_{L^{p}_{x}(\mathcal{L}^{q})}=(\int_{\mathbb{R}^{d}}(\int_{B} \psi_{\infty}|f|^{q}dR)^{\frac{p}{q}}dx)^{\frac{1}{p}}<\infty\big\};$
	
	$(5)~H^{s}(\mathcal{L}^{2})=\{f\big| \|f\|^2_{H^{s}(\mathcal{L}^{2})}=\int_{\mathbb{R}^{d}}\int_B(|f|^2+|\Lambda^s f|^2)\psi_\infty dRdx<\infty\};$	
	
	$(6)~$The energy and energy dissipation functionals for $(\rho,u,g)$ as follows:
	\begin{align*}
			&E_{\beta}(t)=\|\nabla^{\beta}(\rho,u)\|^2_{L^2}+\|\nabla^{\beta}g\|^2_{L^2(\mathcal{L}^2)},\\
			&D_{\beta}(t)=\mu\|\nabla^{\beta}\nabla u\|^2_{L^2}+(\mu+\mu')\|\nabla^{\beta}{\rm div} u\|^2_{L^2}+\|\nabla^{\beta}\nabla_R g\|^2_{L^2(\mathcal{L}^2)},\\
			&\mathcal{E}_{\delta}(t)=\|\nabla^{\delta}(\rho,u)\|^2_{H^{2-\delta}}+\|\nabla^{\delta}g\|^2_{H^{2-\delta}(\mathcal{L}^2)}+2\eta(\nabla^{\delta}u,\nabla\nabla^{\delta}\rho)_{H^{1-\delta}},\\
			&\mathcal{D}_{\delta}(t)={\mu}\|\nabla^{\delta}\nabla u\|^2_{H^{2-\delta}}+(\mu+\mu')\|\nabla^{\delta}{\rm div}u\|^2_{H^{2-\delta}}+\|\nabla^{\delta}\nabla_Rg\|^2_{H^{2-\delta}(\mathcal{L}^2)}+\frac{\eta}{2}\|\nabla\nabla^{\delta}\rho\|^2_{H^{1-\delta}}.
		    	\end{align*}
	where ${\beta}\in[0,2],\delta\in[0,1]$ and $\eta$  small enough. 
Next we recall the Littlewood-Paley decomposition.

		Let $\mathcal{C}$ be the annulus $\{\xi\in\mathbb{R}^d:\frac 3 4\leq|\xi|\leq\frac 8 3\}$. There exist radial functions $\varphi$, valued in the interval $[0,1]$, belonging respectively to $\mathcal{D}(\mathcal{C})$, and such that
		$$ \forall\xi\in\mathbb{R}^d\backslash\{0\},\ \sum_{j\in\mathbb{Z}}\varphi(2^{-j}\xi)=1, $$
		$$ |j-j'|\geq 2\Rightarrow\mathrm{Supp}\ \varphi(2^{-j}\cdot)\cap \mathrm{Supp}\ \varphi(2^{-j'}\cdot)=\emptyset. $$
		Furthermore, we have
		$$ \forall\xi\in\mathbb{R}^d\backslash\{0\},\ \frac 1 2\leq\sum_{j\in\mathbb{Z}}\varphi^2(2^{-j}\xi)\leq 1. $$

	Let $u$ be a tempered distribution in $\mathcal{S}'_h(\mathbb{R}^d)$. For all $j\in\mathbb{Z}$, define
	$$
	\dot{\Delta}_j u=\mathcal{F}^{-1}(\varphi(2^{-j}\cdot)\mathcal{F}u).
	$$
	Then the Littlewood-Paley decomposition is given as follows:
	$$ u=\sum_{j\in\mathbb{Z}}\dot{\Delta}_j u \quad \text{in}\ \mathcal{S}'(\mathbb{R}^d). $$
	Now, we introduce the definition of homogenous Besov spaces as follows.
	\begin{defi}
	Let $s\in\mathbb{R},\ 1\leq p,r\leq\infty$ and $1\leq q\leq p\leq \infty.$ The homogeneous Besov space $\dot{B}^s_{p,r}$ and $\dot{B}^s_{p,r}(\mathcal{L}^q)$ are defined by
	$$ \dot{B}^s_{p,r}=\{u\in \mathcal{S}'_h:\|u\|_{\dot{B}^s_{p,r}}=\Big\|(2^{js}\|\dot{\Delta}_j u\|_{L^p})_j \Big\|_{l^r(\mathbb{Z})}<\infty\}, $$
	$$ \dot{B}^s_{p,r}(\mathcal{L}^q)=\{\phi\in \mathcal{S}'_h:\|\phi\|_{\dot{B}^s_{p,r}(\mathcal{L}^q)}=\Big\|(2^{js}\|\dot{\Delta}_j \phi\|_{L_{x}^{p}(\mathcal{L}^q)})_j \Big\|_{l^r(\mathbb{Z})}<\infty\}.$$
\end{defi}
\subsection{\textbf{Main results}}
In this section, we state our main results.      
\begin{theo}(Global regularity)\label{global solution'}
	Let $d=2.$ Assume that the initial data $(\rho_0,u_0,g_0)\in H^2\times H^2\times H^2(\mathcal{L}^2)$ and satisfy the conditions $\int_{B}g_0\psi_{\infty}dR=0$ and $1+g_0>0.$  For any constant $M$, there  exists a constant $\epsilon$  ($\epsilon$ is small enough and depends on $M$) such that  if 
	\begin{align}\label{initial}
		E_1(0)+E_2(0)\leq M,	~~~and~~~E_0(0)+E_1(0)\leq \epsilon,
	\end{align}
	then the system \eqref{eq1} admits a unique global solution $(\rho,u,g) $ with $\int_B g\psi_{\infty}dR=0$ and $1+g>0.$ 
	
	Moreover, we have 
	\begin{align*}
		\sup_{t\in [0,\infty)} \mathcal{E}_0(t)+\int_0^{\infty} \mathcal{D}_0(t)dt\leq C\mathcal{E}_0(0),
	\end{align*}
	where $C>1$ is a constant.
\end{theo}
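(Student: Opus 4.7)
My plan is local existence combined with a continuity argument driven by two separate \textit{a priori} energy estimates: a low-order one (at $H^1$ level) that uses and reproduces the smallness $\epsilon$, and a high-order one (at $\dot H^2$ level) that uses the resulting $H^1$ smallness to absorb the dangerous cubic terms while only requiring $M$-boundedness of $\nabla^2(\rho,u,g)$. Local well-posedness of the reformulated system \eqref{eq1} in $H^2\times H^2\times H^2(\mathcal L^2)$ is standard (Friedrichs approximation with a lifting of the Fokker--Planck drift), and the constraints $\int_B g\psi_\infty\,dR=0$ and $1+g>0$ propagate from the conservation identity and the maximum principle of the Fokker--Planck equation; in $d=2$, Gagliardo--Nirenberg gives $\|g\|_{L^\infty}\lm\|g\|_{L^2}^{1/2}\|\nabla^2 g\|_{L^2}^{1/2}\lm\sqrt{\epsilon(M+\epsilon)}$, which, by choosing $\epsilon$ small relative to $M$, keeps $1+g$ strictly positive. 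On this basis I would set bootstrap assumptions $\sup_{[0,T]}(E_0+E_1)\leq 2C_1\epsilon$ and $\sup_{[0,T]}E_2\leq 2C_2(M+\epsilon)$ on the maximal interval and improve them.

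For the low-order step, I test the $\rho,u,g$ equations in $L^2$ and $\mathcal L^2$. The structural observation is the FENE cancellation $({\rm div}\,u,\tau_{ii}(g))=(\nabla u R\nabla_R\mathcal U,g)_{\mathcal L^2}$, which kills the linear coupling between the momentum and Fokker--Planck equations and leaves the dissipation $D_0$; repeating at $\nabla$ level yields the analogous $H^1$ bound. Because $\rho$ carries no direct dissipation, I add $\eta\frac{d}{dt}(u,\nabla\rho)_{H^1}$, which, on using the equations, produces $\eta\|\nabla\rho\|_{H^1}^2$ on the dissipation side at the cost of controllable remainders. The nonlinearities $F,H,G$ are at least quadratic in $(\rho,u,g)$ and their first derivatives, so in $d=2$ the Gagliardo--Nirenberg inequality $\|f\|_{L^4}\lm\|f\|_{L^2}^{1/2}\|\nabla f\|_{L^2}^{1/2}$ lets me bound each such term by $\sqrt{E_0+E_1}\cdot(D_0+D_1)$; the smallness bootstrap absorbs this factor into the dissipation, giving
\begin{align*}
(E_0+E_1)(t)+\int_0^t (D_0+D_1+\eta\|\nabla\rho\|_{H^1}^2)\,dt'\leq C(E_0(0)+E_1(0))\leq C\epsilon.
\end{align*}

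The higher-order step is the principal obstacle, since $E_2$ is only bounded by $M$ and smallness at the $\nabla^2$ level is unavailable. Following the hint in the introduction, I allocate derivatives in each nonlinear term so that two $\nabla^2$ factors land inside the dissipation norms and the remaining factor is controlled in $H^1$, which is already small. Concretely, for the worst term $\int\nabla u\cdot\nabla^2\rho\cdot\nabla^2\rho\,dx$ I would use
\begin{align*}
\int |\nabla u||\nabla^2\rho|^2\,dx\lm\|\nabla u\|_{L^2}^{1/2}\|\nabla^2 u\|_{L^2}^{1/2}\|\nabla^2\rho\|_{L^2}\|\nabla^2\rho\|_{L^4},
\end{align*}
together with $\|\nabla^2\rho\|_{L^4}\lm\|\nabla^2\rho\|_{L^2}^{1/2}\|\nabla^3\rho\|_{L^2}^{1/2}$; the crucial factor $\|\nabla u\|_{L^2}^{1/2}$ carries the $H^1$ smallness, while the rest goes into $D_1+D_2$ by Young's inequality. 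The FENE coupling contributions are dealt with by the same cancellation identity at the $\nabla^2$ level, and the $h(\rho)$-dependent terms by Moser-type composition estimates combined with the $L^\infty$ smallness of $\rho$ noted above. Adding $\eta\frac{d}{dt}(\nabla u,\nabla^2\rho)$ to recover $\|\nabla^2\rho\|_{L^2}^2$ dissipation produces a differential inequality whose right-hand side is $\lm\sqrt{E_0+E_1}\,\mathcal D_0$; under the low-order bootstrap this closes
\begin{align*}
E_2(t)+\int_0^t(D_2+2\eta\|\nabla^2\rho\|_{L^2}^2)\,dt'\leq C(E_1(0)+E_2(0)).
\end{align*}

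Summing the two closed estimates and choosing $\epsilon$ small enough (depending on $M$) that both bootstraps are strictly improved, a standard continuity argument yields $T^*=\infty$, and adding the two inequalities gives exactly $\mathcal E_0(t)+\int_0^t\mathcal D_0(t')\,dt'\leq C\mathcal E_0(0)$. The main difficulty, as stressed above, is the high-order estimate: in the critical dimension $d=2$ one cannot afford $L^\infty$ bounds on first derivatives, and the essential trick is the interpolation allocation that always places the ``small'' $H^1$ factor in front of the ``large'' $H^2$ pair, thus converting the $H^1$ smallness into an absorption into the $H^2$-dissipation.
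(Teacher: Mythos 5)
Your overall strategy coincides with the paper's: local existence by iteration, a two-tier bootstrap with a small $H^1$-level estimate and an $M$-bounded $\dot H^2$-level estimate, cross terms $\eta(u,\nabla\rho)$ and $\eta(\nabla u,\nabla^2\rho)$ to manufacture dissipation for the density, and Gagliardo--Nirenberg allocation so that the small $H^1$ factor multiplies the large $H^2$ quantities. However, your treatment of the term you yourself single out as the worst one, $\int|\nabla u||\nabla^2\rho|^2\,dx$, contains a genuine error: you bound $\|\nabla^2\rho\|_{L^4}\lesssim\|\nabla^2\rho\|_{L^2}^{1/2}\|\nabla^3\rho\|_{L^2}^{1/2}$ and claim the result ``goes into $D_1+D_2$ by Young's inequality.'' But $\rho$ solves a transport-type equation with no smoothing: $\|\nabla^3\rho\|_{L^2}$ is not controlled by the $H^2$ regularity class at all, and it appears in none of the dissipation functionals ($D_2$ contains only $\|\nabla^3 u\|_{L^2}^2$, $\|\nabla^2\mathrm{div}\,u\|_{L^2}^2$ and $\|\nabla^2\nabla_R g\|_{L^2(\mathcal L^2)}^2$; the only density dissipation available is $\eta\|\nabla^2\rho\|_{L^2}^2$ from the cross term). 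The correct allocation, which the paper uses, is to put the sup norm on the velocity instead: $\|\nabla u\|_{L^\infty}\|\nabla^2\rho\|_{L^2}^2\lesssim\|\nabla u\|_{L^2}^{1/2}\|\nabla^3 u\|_{L^2}^{1/2}\|\nabla^2\rho\|_{L^2}^2\lesssim (E_1E_2)^{1/4}D_2^{1/4}\|\nabla^2\rho\|_{L^2}^{3/2}$, after which Young's inequality leaves $(E_1E_2)^{1/4}(D_2+\|\nabla^2\rho\|_{L^2}^2)$ with the small prefactor $(\epsilon M)^{1/4}$.

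A secondary structural point: you place $\eta\|\nabla\rho\|_{H^1}^2$ in the dissipation of the \emph{low-order} closed estimate by adding $\eta\frac{d}{dt}(u,\nabla\rho)_{H^1}$. The piece $(\nabla u,\nabla^2\rho)$ of that cross term produces, upon differentiation, $\|\nabla\mathrm{div}\,u\|_{L^2}^2$ and terms controlled only by $D_2$ and $\|\nabla^2 u\|_{H^1}^2$ (which contains $\|\nabla^3 u\|_{L^2}^2$); these do not sit inside $D_0+D_1$, so the low-order block cannot close with $\|\nabla^2\rho\|_{L^2}^2$ on its dissipation side. The paper avoids this by keeping $(u,\nabla\rho)$ (hence only $\|\nabla\rho\|_{L^2}^2$) in the low-order closed estimate and attaching $(\nabla u,\nabla^2\rho)$ (hence $\|\nabla^2\rho\|_{L^2}^2$) to the high-order one, where $D_2$ is available for absorption. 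Both issues are repairable without changing your architecture, but as written the key high-order step would fail.
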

\begin{rema}
	Taking $\rho=1,$ the system \eqref{eq1} reduces to the incompressible FENE dumbbell model. These results of the global regularity of strong solutions in the current manuscript improve the result of  \cite{Masmoudi2008}.
\end{rema}
\begin{rema}\label{Re1}
	Under the conditions that the initial value satisfies Theorem \ref{global solution'}, the above result of global regularity is also valid for the  compressible co-rotational FENE model \eqref{eq0} with $\sigma(u)=\frac{\nabla u-(\nabla u)^{\mathrm{T}}}{2}$ and  the compressible FENE model with $d=3$. For the case $d=3,$ the proof of global existence is similar to $d=2,$ then we omit it in this paper. 
\end{rema}
\begin{theo} (Optimal decay estimates) \label{opdecay'}
	Let  $(\rho_0,u_0,g_0)$ satisfy the conditions in Theorem \ref{global solution'}. Assume further that $(\rho_0,u_0,g_0)\in \dot{B}^{-1}_{2,\infty}\times\dot{B}^{-1}_{2,\infty}\times\dot{B}^{-1}_{2,\infty}(\mathcal{L}^2),$ then it holds that for $s\in [0,1]$, 
	\begin{align*}
		\|\nabla^{s}\rho\|_{L^2}+\|\nabla^{s}u\|_{L^2}\leq C(1+t)^{-\frac {1+s} {2}},~~\|\nabla^{s}g\|_{L^2(\mathcal{L}^2)}\leq C(1+t)^{-\frac {2+s} {2}},
	\end{align*}
and
\begin{align*}
	\|\nabla^{2}\rho\|_{L^2}+\|\nabla^{2}u\|_{L^2}+\|\nabla^{2}g\|_{L^2(\mathcal{L}^2)}\leq C(1+t)^{-\frac 3 2}.
\end{align*}
\end{theo}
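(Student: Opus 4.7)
The plan is a three-stage bootstrap: first extract a weak logarithmic decay via a Fourier splitting argument in the spirit of Schonbek and of Luo--Yin, then upgrade to the optimal polynomial rate in $H^1$ by combining a time-weighted energy estimate with a uniform $\dot B^{-1}_{2,\infty}$ bound, and finally recover the sharp $(1+t)^{-3/2}$ rate on the top derivative via a low--high frequency decomposition. Starting from the global bound supplied by Theorem \ref{global solution'}, I would differentiate the basic energy functional $\mathcal{E}_0(t)$ in time and split frequencies at $S(t)=\{\xi:|\xi|^2\leq C(e+t)^{-1}\}$. On $S(t)^c$ the dissipation $\mathcal{D}_0$ controls $\mathcal{E}_0$ up to the factor $(e+t)^{-1}$, while on $S(t)$ the low-frequency contribution must be tracked through a linearized representation. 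The crucial point is that the $R$-Poincar\'e inequality on $B$ gives $\|\nabla_R g\|_{L^2(\mathcal{L}^2)}^2 \gtrsim \|g\|_{L^2(\mathcal{L}^2)}^2$, which lets the stress $\tau(g)$ be absorbed. This machinery yields $\mathcal{E}_0(t)\lm (\ln(e+t))^{-\ell}$ for any $\ell\geq 1$.

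Using the logarithmic decay to render nonlinear remainders small, I would next establish the uniform low-frequency estimate
\begin{align*}
\|(\rho,u)(t)\|_{\dot B^{-1}_{2,\infty}} + \|g(t)\|_{\dot B^{-1}_{2,\infty}(\mathcal{L}^2)} \leq C.
\end{align*}
The derivation applies $\dot\Delta_j$ to \eqref{eq1}, pairs with $\dot\Delta_j(\rho,u,g)$ in $L^2$ or $L^2(\mathcal{L}^2)$, and handles the nonlinearities via Bony's decomposition; the only dangerous contribution, ${\rm div}\,\tau$ acting as a source in the $u$-equation, gains an $R$-gradient that is time-integrable through $\int_0^\infty \mathcal{D}_0\,dt<\infty$ thanks to the coercive structure of $\mathcal{L}$. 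With this uniform bound secured, the interpolation
\begin{align*}
\|\nabla^s f\|_{L^2} \lm \|f\|_{\dot B^{-1}_{2,\infty}}^{1/(s+2)}\, \|\nabla^{s+1} f\|_{L^2}^{(s+1)/(s+2)}
\end{align*}
fed into the Fourier-split differential inequality for $\mathcal{E}_0$ closes at the optimal rate $(1+t)^{-(1+s)/2}$ for $s\in[0,1]$. The extra half-order of decay for $g$, namely $(1+t)^{-(2+s)/2}$, is gained by trading the Poincar\'e estimate $\|g\|_{L^2(\mathcal{L}^2)} \lm \|\nabla_R g\|_{L^2(\mathcal{L}^2)}$ for a further factor of dissipation-rate decay.

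The main obstacle will be the top-order rate $(1+t)^{-3/2}$ on $\|\nabla^2(\rho,u,g)\|_{L^2(\mathcal{L}^2)}$. As the authors emphasize, the augmented functional $E_2+2\eta(\nabla u,\nabla^2\rho)$ used in the global regularity proof is not coercive over $E_2$ at low frequencies, so the Fourier-splitting trick from the previous stage fails verbatim here. I would circumvent this by a time-frequency decomposition $f=f^L+f^H$ with $f^L=\sum_{2^j\leq 1}\dot\Delta_j f$. On the low-frequency part Bernstein's inequality reduces $\|\nabla^2 f^L\|_{L^2}$ to $\|\nabla f\|_{L^2}$, which already decays at $(1+t)^{-1}$, and a further interpolation against the $\dot B^{-1}_{2,\infty}$ bound upgrades the rate to $(1+t)^{-3/2}$. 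On the high-frequency part Bernstein gives $\|\nabla u^H\|_{L^2}\lm \|\nabla^2 u^H\|_{L^2}$, so the cross term $(\nabla u^H,\nabla^2\rho^H)$ is dominated by $D_2^H$ and therefore $E_2^H$ is equivalent to its augmented version. A time-weighted estimate of the form
\begin{align*}
\frac{d}{dt}\bigl[(1+t)^{3} E_2^H\bigr] + (1+t)^{3} D_2^H \lm (1+t)^{3}\,(\text{nonlinear remainders}),
\end{align*}
where the remainders are controlled by the already-established optimal decay of the lower-order norms, closes via Gr\"onwall and delivers the sharp top-order rate.
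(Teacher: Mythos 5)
Your overall architecture --- logarithmic decay by Fourier splitting, propagation of negative Besov regularity, a bootstrap to the optimal $H^1$ rate, and a separate treatment of the highest derivative --- matches the paper's, but two steps as written do not close. The first gap is the claim that the uniform bound $\|(\rho,u)(t)\|_{\dot B^{-1}_{2,\infty}}+\|g(t)\|_{\dot B^{-1}_{2,\infty}(\mathcal{L}^2)}\leq C$ can be extracted directly once the logarithmic decay ``renders the nonlinear remainders small.'' In the $\dot B^{-\sigma}_{2,\infty}$ energy inequality the critical sources scale like $\|u\|_{L^{2/\sigma}}\|\nabla u\|_{L^2}\lesssim (1+t)^{-(1+\alpha-\sigma/2)}$ when $\mathcal{E}_0\lesssim(1+t)^{-\alpha}$ and $\mathcal{E}_1\lesssim(1+t)^{-\alpha-1}$, so time-integrability forces $\sigma<2\alpha$. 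With only logarithmic decay nothing closes, and even with the first polynomial rate $\alpha=\tfrac12$ (which the paper must first establish in Lemma \ref{decayp} by feeding the \emph{initial-data} Besov assumption into the low-frequency integral and running a Gronwall argument against the logarithmic decay) the choice $\sigma=1$ produces the exactly borderline, non-integrable rate $(1+t)^{-1}$. This is precisely why the paper iterates: $\alpha=\tfrac12\Rightarrow\sigma=\tfrac34\Rightarrow\alpha=\tfrac34\Rightarrow\sigma=1\Rightarrow\alpha=1$. Your plan skips the intermediate index and the divergence is genuine, not technical.

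The second gap is in the low-frequency half of your top-order argument. With the fixed cutoff $2^j\leq1$, Bernstein gives only $\|\nabla^2 f^L\|_{L^2}\lesssim\|\nabla f\|_{L^2}\lesssim(1+t)^{-1}$, and no interpolation against the uniform $\dot B^{-1}_{2,\infty}$ bound can upgrade this: that bound carries no decay, and writing $\|\nabla^2 f\|_{L^2}\lesssim\|f\|_{\dot B^{-1}_{2,\infty}}^{1-\theta}\|\nabla f\|_{L^2}^{\theta}$ would require $\theta=3/2>1$. The missing half power must come from a time-dependent threshold $2^j\lesssim(1+t)^{-1/2}$; but then on the complementary high frequencies Bernstein only yields $\|\nabla u^H\|_{L^2}^2\lesssim(1+t)\|\nabla^2 u^H\|_{L^2}^2$, the equivalence of $E_2^H$ with its augmented version degenerates at rate $(1+t)^{-1}$, and your weighted Gronwall argument collapses back into the paper's device of the functional $\kappa(1+t)E_2+(\nabla u,\nabla^2\rho)$ together with the weight $(1+t)^3$ (Proposition \ref{highdecay}). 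Your high-frequency step is salvageable along those lines, but as stated the decomposition does not by itself deliver $(1+t)^{-3/2}$; you would also need the time-weighted integrability $\int_0^t(1+t')^{\alpha_2}\mathcal{D}_1\,dt'\leq C$, $\alpha_2<2$, which the paper extracts from the closed $\mathcal{E}_1$ estimate, to absorb remainders such as $(1+t)^{-4}\|\nabla\nabla_Rg\|_{L^2(\mathcal{L}^2)}^2$ and the quadratic terms containing $\|\nabla^2\rho\|_{L^2}^2$ itself.
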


\begin{rema}\label{Re2} initial condition
	Under the conditions that the initial value satisfies Theorem \ref{opdecay'},	the above results of optimal decay rate for $(\rho,u)$ is also valid for the compressible co-rotational FENE model and the optimal decay rate of $g$ satisfies $\|g\|_{H^2(\mathcal{L}^2)}\leq Ce^{-ct}$. In addition, if $(\rho_0,u_0,g_0)\in H^3\times H^3\times H^3(\mathcal{L}^2)$ with $d=2,$  the optimal decay rate of $\|\nabla^2g\|_{L^2(\mathcal{L}^2)}$ is $(1+t)^{-2}.$ These results of the optimal decay estimates of
	strong solutions in the current manuscript is an extension of \cite{Luo-Yin2,luo2020large} to the compressible fluid. 
\end{rema}
\begin{rema}\label{gg}
	For $d=3,$ 	let the initial value satisfy the conditions in Theorem \ref{global solution'}. Assume further that $(\rho_0,u_0,g_0)\in \dot{B}^{-\frac 32}_{2,\infty}\times\dot{B}^{-\frac 32}_{2,\infty}\times\dot{B}^{-\frac 32}_{2,\infty}(\mathcal{L}^2).$ Then, for any $\beta\in [0,2],$  the optimal decay rate of $\|\nabla^{\beta}(\rho,u)\|_{L^2}$ is $(1+t)^{-\frac{3+2\beta}{4}}$ and  the optimal decay rate of $\|\nabla^{s} g\|_{L^2(\mathcal{L}^2)}$ is $(1+t)^{-\frac{5+2s}{4}}$ with $s\in [0,1].$ In addition, if $(\rho_0,u_0,g_0)\in H^3\times H^3\times H^3(\mathcal{L}^2)$, the optimal decay rate of $\|\nabla^2g\|_{L^2(\mathcal{L}^2)}$ is $(1+t)^{-\frac 94}.$ Note that the proof of optimal decay rate of solutions is simpler. Therefore, we omit the  proof of the optimal decay  rate for $d=3.$
\end{rema}
\begin{rema}
Together with the result in \cite{Li2011Large,Xu2019}, one can see that the $H^2$ decay estimates of $(\rho,u)$  is optimal in Theorem \ref{opdecay'}.
\end{rema}

\section{Global regularity and optimal decay estimates with large data}\label{sec3}
We start the proof of main results by the following  lemmas.
\subsection{\textbf{Interpolation inequality and Hardy type inequality}}
     \begin{lemm}\cite{1959On}\label{intef}
		Let $d\geq2,~p\in[2,+\infty)$ and $0\leq s,s_1\leq s_2$, then there exists a constant $C$ such that
		$$\|\Lambda^{s}f\|_{L^{p}}\leq C \|\Lambda^{s_1}f\|^{1-\theta}_{L^{2}}\|\Lambda^{s_2} f\|^{\theta}_{L^{2}},$$
		where $0\leq\theta\leq1$ and $\theta$ satisfy
		$$ s+d(\frac 1 2 -\frac 1 p)=s_1 (1-\theta)+\theta s_2.$$
		Note that we require that $0<\theta<1$, $0\leq s_1\leq s$, when $p=\infty$.
	\end{lemm}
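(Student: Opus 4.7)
The plan is to combine a Littlewood-Paley decomposition with Bernstein's inequality and a one-parameter optimization, an approach well suited to the fractional-derivative setting. First, I would write $f=\sum_{j\in\mathbb{Z}}\dot{\Delta}_j f$ using the decomposition already recalled in the paper. Since $\dot{\Delta}_j f$ has Fourier support in a dyadic annulus at frequency $2^j$, Bernstein's inequality gives
\begin{equation*}
\|\Lambda^s\dot{\Delta}_j f\|_{L^p}\lesssim 2^{j\sigma}\|\dot{\Delta}_j f\|_{L^2},\qquad \sigma:=s+d\Big(\tfrac12-\tfrac1p\Big),
\end{equation*}
and the scaling hypothesis of the lemma reads exactly $\sigma=(1-\theta)s_1+\theta s_2\in[s_1,s_2]$.

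Next I would apply the triangle inequality $\|\Lambda^s f\|_{L^p}\leq\sum_{j\in\mathbb{Z}}\|\Lambda^s\dot{\Delta}_j f\|_{L^p}$ and split the sum at an integer $N$ to be chosen. On low frequencies $j\leq N$, I would bound $2^{j\sigma}\|\dot{\Delta}_j f\|_{L^2}\leq 2^{j(\sigma-s_1)}\|\Lambda^{s_1}f\|_{L^2}$; on high frequencies $j>N$, the same quantity is bounded by $2^{j(\sigma-s_2)}\|\Lambda^{s_2}f\|_{L^2}$. Provided $0<\theta<1$, the two geometric series sum to
\begin{equation*}
\|\Lambda^s f\|_{L^p}\lesssim 2^{N(\sigma-s_1)}\|\Lambda^{s_1}f\|_{L^2}+2^{N(\sigma-s_2)}\|\Lambda^{s_2}f\|_{L^2}.
\end{equation*}
Optimizing in $N$ by equating the two contributions forces $2^{N(s_2-s_1)}\sim\|\Lambda^{s_2}f\|_{L^2}/\|\Lambda^{s_1}f\|_{L^2}$, and the algebraic identity $(\sigma-s_1)/(s_2-s_1)=\theta$ converts this optimized bound into exactly the claimed product form $\|\Lambda^{s_1}f\|_{L^2}^{1-\theta}\|\Lambda^{s_2}f\|_{L^2}^{\theta}$.

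The degenerate endpoints $\theta\in\{0,1\}$ reduce to a pure homogeneous Sobolev embedding $\dot{H}^{s_i}\hookrightarrow\dot{H}^s_p$ and are handled directly from Bernstein on a single Littlewood-Paley block together with the square-function characterization of $L^p$ for $p<\infty$. The main (mild) obstacle is the $p=\infty$ case, where both geometric sums genuinely require the strict inequalities $s_1<\sigma<s_2$; this is precisely the restriction $0<\theta<1$ stated in the lemma, and it reflects the well-known failure of the borderline embedding $\dot{H}^{d/2}\hookrightarrow L^\infty$. The auxiliary condition $0\leq s_1\leq s$ in the $p=\infty$ case ensures $\sigma\geq s_1\geq 0$, so that the low-frequency sum remains well defined on $\mathcal{S}'_h$.
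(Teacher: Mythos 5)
Your argument is correct. Note that the paper does not prove this lemma at all: it is imported verbatim from Nirenberg's 1959 paper, where such interpolation inequalities are obtained by real-variable means (one-dimensional calculus inequalities, integration by parts, and induction on the dimension), and the fractional $\Lambda^{s}$ version used here is the standard $L^2$-based extension of that result. Your Littlewood--Paley/Bernstein proof is therefore a genuinely different, self-contained route, and it is well adapted to the precise form stated: since all norms on the right are $L^2$-based, Plancherel makes the block norms $2^{js_i}\|\dot{\Delta}_j f\|_{L^2}$ exactly comparable to $\|\Lambda^{s_i}\dot{\Delta}_j f\|_{L^2}$, the two geometric series converge precisely when $0<\theta<1$, and the optimization in the splitting index $N$ produces the multiplicative form $\|\Lambda^{s_1}f\|_{L^2}^{1-\theta}\|\Lambda^{s_2}f\|_{L^2}^{\theta}$ with no loss. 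Your treatment of the endpoints is also the right one: for $\theta\in\{0,1\}$ and $p<\infty$ the claim degenerates to the embedding $\dot{B}^{\sigma}_{2,2}\hookrightarrow \dot{B}^{0}_{p,2}$, which your square-function argument (using Minkowski's inequality, valid since $p\ge 2$) delivers, while for $p=\infty$ the failure of the borderline case is exactly why the lemma imposes $0<\theta<1$ there. What the citation buys the authors is brevity and a statement valid in the full Gagliardo--Nirenberg generality (arbitrary $L^q$ norms on the right); what your proof buys is transparency about where each hypothesis ($p\ge 2$, strictness of $\theta$ at $p=\infty$) enters. The only points left implicit in your writeup are routine: the reverse Bernstein inequality on annuli needed for $2^{js_1}\|\dot{\Delta}_j f\|_{L^2}\lesssim\|\Lambda^{s_1}f\|_{L^2}$, the choice of $N$ as the nearest integer to the optimizer, and the reduction to the case where both right-hand norms are finite and nonzero.
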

\begin{lemm}\cite{Masmoudi2008}\label{Lemma1}
	If $\int_B g\psi_\infty dR=0$, then there exists a constant $C$ such that
	$$\|g\|_{\mathcal{L}^{2}}\leq C \|\nabla _{R} g\|_{\mathcal{L}^{2}}.$$
\end{lemm}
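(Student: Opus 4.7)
The statement is a weighted Poincaré inequality on the bounded ball $B=B(0,1)$ with the degenerate weight $\psi_\infty(R)=Z^{-1}(1-|R|^2)^k$, restricted to the subspace of $\psi_\infty$-mean-zero functions. My plan is to deduce it from a spectral-gap property of the Fokker--Planck operator $\mathcal{L}$ defined earlier in the paper. The crucial identity, obtained by integrating by parts against the weight $\psi_\infty$, is
\begin{equation*}
(\mathcal{L}g,g)_{\mathcal{L}^2}=\int_B \tfrac{1}{\psi_\infty}\nabla_R\cdot(\psi_\infty\nabla_R g)\,(-g)\,\psi_\infty\,dR=\int_B|\nabla_R g|^2\,\psi_\infty\,dR=\|\nabla_R g\|_{\mathcal{L}^2}^2,
\end{equation*}
where the boundary term vanishes because $\psi_\infty$ vanishes (to order $k>0$) on $\partial B$, so no boundary condition on $g$ is needed beyond $\nabla_R g\in\mathcal{L}^2$. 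Hence the lemma is equivalent to the lower bound $(\mathcal{L}g,g)_{\mathcal{L}^2}\geq C^{-1}\|g\|_{\mathcal{L}^2}^2$ on $\{g:\int_B g\psi_\infty\,dR=0\}$.

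Next I would set up the spectral theory. The operator $\mathcal{L}$ is symmetric and nonnegative on $\mathcal{L}^2=L^2(B;\psi_\infty dR)$ via the Dirichlet form above, and the natural form domain is the weighted Sobolev space $\{g\in\mathcal{L}^2:\nabla_R g\in\mathcal{L}^2\}$. A standard compactness argument, using that $\psi_\infty$ is bounded on compact subsets of $B$ and that the weight degenerates at $\partial B$, shows that the embedding of this form domain into $\mathcal{L}^2$ is compact. Consequently $\mathcal{L}$ has purely discrete spectrum $0=\lambda_0<\lambda_1\leq\lambda_2\leq\cdots$. The kernel is spanned by the constant function $\mathbf{1}$, because $(\mathcal{L}g,g)_{\mathcal{L}^2}=\|\nabla_R g\|_{\mathcal{L}^2}^2=0$ forces $\nabla_R g\equiv 0$ on $B$. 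Therefore $\lambda_1>0$ and, by the variational characterization,
\begin{equation*}
\lambda_1\|g\|_{\mathcal{L}^2}^2\leq(\mathcal{L}g,g)_{\mathcal{L}^2}=\|\nabla_R g\|_{\mathcal{L}^2}^2
\end{equation*}
for every $g$ in the orthogonal complement of constants, which is exactly $\{g:\int_B g\psi_\infty dR=0\}$. Taking $C=\lambda_1^{-1/2}$ finishes the proof.

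The main obstacle is the compactness of the embedding in the presence of the degenerate weight: the measure $\psi_\infty\,dR$ vanishes at $\partial B$, so care is needed to rule out mass escaping to the boundary and to justify the discreteness of the spectrum. I would handle this by localization -- work on $B_{1-\delta}$, where the weight is bounded from below, apply the classical Rellich theorem there, and then use the fact that $k>0$ (together with a weighted Hardy-type bound near $\partial B$) to show that the contribution of the annulus $1-\delta<|R|<1$ is controlled uniformly by $\delta^k$ times $\|\nabla_R g\|_{\mathcal{L}^2}^2$. This is the same mechanism by which $\mathcal{U}(R)=-k\log(1-|R|^2)$ enforces confinement of the polymer tip away from the boundary and which makes the FENE potential well-suited to spectral-gap estimates. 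Since this is exactly the content of Masmoudi's proof in \cite{Masmoudi2008}, I would simply cite it.
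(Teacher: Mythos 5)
The paper gives no proof of this lemma at all: it is quoted directly from \cite{Masmoudi2008}, so there is no internal argument to compare against, and your decision to ultimately defer to that reference matches what the paper does. That said, your sketch of \emph{why} the inequality holds is sound and is the standard route. The Dirichlet-form identity $(\mathcal{L}g,g)_{\mathcal{L}^2}=\|\nabla_R g\|_{\mathcal{L}^2}^2$ is correct (the boundary term carries the factor $\psi_\infty\sim(1-|R|^2)^k$, consistent with the stated boundary condition $\psi_\infty\nabla_R g\cdot n=0$), the kernel of the form is the constants on the connected ball, the orthogonal complement of constants in $\mathcal{L}^2$ is exactly $\{g:\int_B g\psi_\infty\,dR=0\}$, and once the form domain embeds compactly into $\mathcal{L}^2$ the variational characterization of $\lambda_1$ gives the inequality with $C=\lambda_1^{-1/2}$. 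You also correctly isolate the compact embedding for the degenerate weight as the only genuinely technical point.

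One quantitative slip in the localization step: the contribution of the annulus $1-\delta<|R|<1$ is not controlled by $\delta^{k}\|\nabla_R g\|_{\mathcal{L}^2}^2$. Writing $t=1-|R|$ and checking the Muckenhoupt condition for the one-dimensional weighted Hardy inequality with weight $t^k$ on $(0,\delta)$, one finds $\sup_{0<r<\delta}\bigl(\int_0^r t^k\,dt\bigr)\bigl(\int_r^\delta t^{-k}\,dt\bigr)\leq C_k\,\delta^2$ for every $k>0$, so the correct gain is a factor $C_k\delta^2$ (plus a trace term on $|R|=1-\delta$ absorbed by the interior Rellich step); the exponent $k$ only enters through the finiteness of the Muckenhoupt constant, not as the power of $\delta$. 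Since any factor tending to $0$ with $\delta$ suffices for compactness, this does not affect the validity of your argument, but the stated exponent is inaccurate. Everything else is fine.
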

\begin{lemm}\cite{LZN.com.2021Global,Masmoudi2008}\label{luoz}
If $\int_B g\psi_{\infty}dR=0$ and $\|g\|_{\mathcal{L}^2}\leq C\|\nabla_Rg\|_{\mathcal{L}^2}$,  then  there exists a constant $C$ such that for any $k>0$,
		\begin{align*}
			|\tau(g)|\leq C\|g\|^{\frac 1 2}_{\mathcal{L}^{2}}
			\|\nabla _{R}g\|^{\frac 1 2}_{\mathcal{L}^{2}}.
		\end{align*} 
\end{lemm}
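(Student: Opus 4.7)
The starting point is to simplify $\tau_{ij}(g)$ using the structure of the equilibrium. Since $\nabla_R\mathcal{U}\,\psi_\infty=-\nabla_R\psi_\infty$ and $\psi_\infty$ vanishes on $\partial B(0,1)$ for every $k>0$, an integration by parts followed by the mean-zero hypothesis $\int_B g\psi_\infty\,dR=0$ gives the identity
\[
\tau_{ij}(g)=\int_B R_i\,\nabla_{Rj}g\,\psi_\infty\,dR.
\]
A direct Cauchy--Schwarz on this representation, using $|R|\le 1$, already produces $|\tau(g)|\le C\|\nabla_R g\|_{\mathcal{L}^2}$. This is however weaker than the claim; the target bound gains an extra factor $\|g\|_{\mathcal{L}^2}^{1/2}$ that must come from the smallness of $g$ itself.

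To extract that factor, I would instead keep the singular weight and work from $\tau_{ij}(g)=2k\int_B R_iR_j(1-|R|^2)^{-1}g\psi_\infty\,dR$, which comes from $\nabla_{Rj}\mathcal{U}=2kR_j/(1-|R|^2)$. Distributing the weight $(1-|R|^2)^{-1/2}$ symmetrically and applying Cauchy--Schwarz yields
\[
|\tau_{ij}(g)|^2\le 4k^2\Bigl(\int_B\frac{g^2\psi_\infty}{1-|R|^2}\,dR\Bigr)\Bigl(\int_B\frac{R_i^2R_j^2\psi_\infty}{1-|R|^2}\,dR\Bigr).
\]
Since $\psi_\infty/(1-|R|^2)=(1-|R|^2)^{k-1}/Z$ is integrable on $B$ for every $k>0$, the second factor is a finite constant $C_k$. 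A second Cauchy--Schwarz on the remaining factor, pairing $g\sqrt{\psi_\infty}$ with $g\sqrt{\psi_\infty}/(1-|R|^2)$, produces
\[
\int_B\frac{g^2\psi_\infty}{1-|R|^2}\,dR\le\|g\|_{\mathcal{L}^2}\Bigl(\int_B\frac{g^2\psi_\infty}{(1-|R|^2)^2}\,dR\Bigr)^{1/2}.
\]

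The final and critical ingredient is a weighted Hardy--Poincar\'e inequality,
\[
\int_B\frac{g^2\psi_\infty}{(1-|R|^2)^2}\,dR\le C\|\nabla_R g\|_{\mathcal{L}^2}^2,
\]
valid for every $k>0$ under the mean-zero condition; this is part of the toolbox developed in \cite{Masmoudi2008} and also used in \cite{LZN.com.2021Global}. Plugging it in and chaining the three estimates gives $|\tau(g)|^2\le C_k\|g\|_{\mathcal{L}^2}\|\nabla_R g\|_{\mathcal{L}^2}$, which is the claim. The main obstacle is precisely this Hardy input: the ordinary Poincar\'e inequality from Lemma \ref{Lemma1} only controls $g$ by $\nabla_R g$ without a singular weight and cannot by itself produce the $\|g\|^{1/2}$ factor; the sharper weighted version uses the FENE structure in an essential way.
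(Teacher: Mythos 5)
Your reduction of $\tau_{ij}(g)$ is fine: the identity $\tau_{ij}(g)=\int_BR_i\nabla_{Rj}g\,\psi_\infty\,dR$ (via $\nabla_R\mathcal{U}\,\psi_\infty=-\nabla_R\psi_\infty$, integration by parts, and mean-zero) is correct, as is the first Cauchy--Schwarz splitting that reduces everything to bounding $\int_B\frac{g^2\psi_\infty}{1-|R|^2}\,dR$, since $\int_B R_i^2R_j^2(1-|R|^2)^{k-1}\,dR<\infty$ for every $k>0$. (Note the paper itself does not prove this lemma; it is quoted from the references, so you are being compared against the standard argument.)

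The gap is the final ingredient. The weighted Hardy inequality $\int_B\frac{g^2\psi_\infty}{(1-|R|^2)^2}\,dR\le C\|\nabla_Rg\|^2_{\mathcal{L}^2}$ is \emph{false} for $0<k\le 1$, and the mean-zero condition cannot save it: take $g\equiv 1$ on $\{|R|>1/2\}$ and extend smoothly into the interior so that $\int_Bg\psi_\infty\,dR=0$. Then $\nabla_Rg$ is bounded and compactly supported away from $\partial B$, so the right-hand side is finite, while the left-hand side contains $\int_{|R|>1/2}(1-|R|^2)^{k-2}\,dR=+\infty$ whenever $k\le 1$. A two-power gain at the boundary requires either $k>1$ or a vanishing trace of $g$ on $\partial B$, neither of which is assumed; and for $k>1$ the whole lemma is anyway immediate from combining $|\tau(g)|\le C\|g\|_{\mathcal{L}^2}$ (the weight is then square-integrable) with your $|\tau(g)|\le C\|\nabla_Rg\|_{\mathcal{L}^2}$. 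So your argument only covers the easy range.

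The repair is to stop one power earlier: estimate $\int_B\frac{g^2\psi_\infty}{1-|R|^2}\,dR$ directly rather than via a second Cauchy--Schwarz. Using $2kR_j(1-|R|^2)^{k-1}/Z=-\nabla_{Rj}\psi_\infty$ one has
\begin{align*}
\int_B\frac{|R|^2g^2\psi_\infty}{1-|R|^2}\,dR=-\frac1{2k}\int_Bg^2\,R\cdot\nabla_R\psi_\infty\,dR=\frac1{2k}\int_B\bigl(d\,g^2+2g\,R\cdot\nabla_Rg\bigr)\psi_\infty\,dR
\le C\|g\|^2_{\mathcal{L}^2}+C\|g\|_{\mathcal{L}^2}\|\nabla_Rg\|_{\mathcal{L}^2},
\end{align*}
the boundary term vanishing because $\psi_\infty=0$ on $\partial B$ for $k>0$; the region $|R|^2<1/2$ contributes at most $2\|g\|^2_{\mathcal{L}^2}$. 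Together with the hypothesis $\|g\|_{\mathcal{L}^2}\le C\|\nabla_Rg\|_{\mathcal{L}^2}$ this gives $\int_B\frac{g^2\psi_\infty}{1-|R|^2}\,dR\le C\|g\|_{\mathcal{L}^2}\|\nabla_Rg\|_{\mathcal{L}^2}$, and inserting this into your first Cauchy--Schwarz yields $|\tau(g)|^2\le C\|g\|_{\mathcal{L}^2}\|\nabla_Rg\|_{\mathcal{L}^2}$ for every $k>0$, as claimed.
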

	\subsection{\textbf{Local existence and uniqueness}}
	\par
	The main part of this subsection is the proof of existence and uniqueness of local solutions. For notational simplicity,  we will adopt the following notations. 
	\begin{align*}
		&E^n_{\beta}(t)=\|\nabla^{\beta} (\rho^n, u^n)\|^2_{L^2}+\|\nabla^{\beta} g^n\|^2_{L^2(\mathcal{L}^2)}, \\
		&D^n_{\beta}(t)=\mu\|\nabla^{{\beta}+1} u^n\|^2_{L^2}+(\mu+\mu')\|\nabla^{{\beta}} {\rm div}u^n\|^2_{L^2}+\|\nabla^{\beta} \nabla_Rg^n\|^2_{L^2(\mathcal{L}^2)},
	\end{align*}
with ${\beta}\in[0,2].$
	\begin{prop}(Local existence)\label{local existence }
	Let $d=2.$ Let $(\rho_0,u_0,g_0)\in H^2\times H^2\times H^2(\mathcal{L}^2)$ and  satisfy the conditions $\int_{B}g_0\psi_{\infty}dR=0$ and $1+g_0>0.$ For any  constant $M,$ there exists a constant $\epsilon(M)$ such that
	\begin{align*}
		(E_1+E_2)(0)\leq M,~~~(E_0+E_1)(0)\leq \epsilon,
	\end{align*}
where $\epsilon$ small enough.

	Then, the system \eqref{eq1} has a unique local solution $$(\rho,u,g)\in  L^{\infty}(0,T; H^2)\times L^{\infty}(0,T;H^2)\times L^{\infty}(0,T;H^2(\mathcal{L}^2)),$$  with $\int_B g\psi_{\infty}dR=0$ and $1+g>0.$
	
	 Moreover, we have
	\begin{align*}
		\sup\limits_{t\in [0,T]}\mathcal{E}_0(t)+\int_0^T \mathcal{D}_0(t)dt\leq C\mathcal{E}_0(0),
	\end{align*}
with $C>0.$
\end{prop}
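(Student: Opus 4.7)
The plan is a Picard iteration tailored to the compressible Navier--Stokes / Fokker--Planck coupling, followed by a uniform $H^2$ energy estimate that exploits exactly the structure already built into $\mathcal{E}_0,\mathcal{D}_0$.

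\textbf{Iteration and linear existence.} Set $(\rho^0,u^0,g^0):=(\rho_0,u_0,g_0)$ extended trivially in $t$, and define $(\rho^{n+1},u^{n+1},g^{n+1})$ as the solution of the linearized system obtained from \eqref{eq1} by inserting $(\rho^n,u^n,g^n)$ into $F,H,G$ while keeping the principal part $(\mathrm{div}\,u,\nabla\rho-\mathrm{div}\,\Sigma(u)-\mathrm{div}\,\tau(g),\mathcal{L}g+\mathrm{div}\,u-\nabla u R\nabla_R\mathcal{U})$ at level $n{+}1$. This is a linear parabolic--hyperbolic system in $(\rho,u)$ coupled to a linear Fokker--Planck equation for $g$; existence on some interval $[0,T_n]$ is standard by a Friedrichs/Galerkin scheme in $x$ combined with an eigenbasis of $\mathcal{L}$ in $R$.

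\textbf{Uniform $\mathcal{E}_0$ bound.} This is the heart of the argument. Applying $\nabla^j$ for $j=0,1,2$ and testing against $\nabla^j\rho^{n+1}$, $\nabla^j u^{n+1}$ and $\psi_\infty\nabla^j g^{n+1}$, two structural cancellations drive the estimate: the compressible cancellation $(\nabla\rho,u)+(\mathrm{div}\,u,\rho)=0$, and the polymer cancellation $(-\mathrm{div}\,\tau(g),u)+(\mathrm{div}\,u-\nabla u R\nabla_R\mathcal{U},g)_{L^2(\mathcal{L}^2)}=0$, the latter after integration by parts in $R$ using the boundary condition $\psi_\infty\nabla_R g\cdot n=0$. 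To recover dissipation of $\nabla\rho$ we then test $\nabla^j$ of the $u$-equation against $\nabla^{j+1}\rho^{n+1}$ for $j=0,1$ and add with small weight $\eta$, producing via the Matsumura--Nishida trick exactly the $\frac{\eta}{2}\|\nabla\rho\|_{H^1}^2$ term appearing in $\mathcal{D}_0$. Nonlinear sources are estimated in $H^2$ by Moser and Kato--Ponce inequalities, together with Lemma \ref{intef} for interpolations; crucially, Lemma \ref{luoz} bounds $\tau(g)$ by $\|g\|_{\mathcal{L}^2}^{1/2}\|\nabla_R g\|_{\mathcal{L}^2}^{1/2}$, so the polymer coupling feeds into the $\nabla_R g$ dissipation rather than into the energy budget. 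The smallness hypothesis $E_0(0)+E_1(0)\le\epsilon$ is used to keep $\|\rho^n\|_{L^\infty}$ small, so that $h(\rho^n)=\rho^n/(1+\rho^n)$ is well-defined with $\|h(\rho^n)\|_{L^\infty}\le\tfrac12$; the potentially large $H^2$ norm is then controlled by Gronwall on a short interval $[0,T]$ with $T=T(M)$. By induction one closes $\sup_{[0,T]}\mathcal{E}_0^{n+1}(t)+\int_0^T\mathcal{D}_0^{n+1}\,dt\le C\mathcal{E}_0(0)$.

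\textbf{Contraction and passage to the limit.} Writing $\delta f^n:=f^{n+1}-f^n$ for $f\in\{\rho,u,g\}$, the same energy method applied to the differences, but only in the base norm $L^2\times L^2\times L^2(\mathcal{L}^2)$, yields after possibly shrinking $T$ a contraction
\begin{align*}
\sup_{[0,T]}\!\bigl(\|\delta\rho^n\|_{L^2}^2+\|\delta u^n\|_{L^2}^2+\|\delta g^n\|_{L^2(\mathcal{L}^2)}^2\bigr)\le\tfrac12\sup_{[0,T]}\!\bigl(\|\delta\rho^{n-1}\|_{L^2}^2+\|\delta u^{n-1}\|_{L^2}^2+\|\delta g^{n-1}\|_{L^2(\mathcal{L}^2)}^2\bigr).
\end{align*}
Hence $(\rho^n,u^n,g^n)$ is Cauchy in $C([0,T];L^2\times L^2\times L^2(\mathcal{L}^2))$; interpolating with the uniform $H^2$ bound yields convergence in $C([0,T];H^s)$ for every $s<2$, enough to pass to the limit in every nonlinearity of \eqref{eq1}. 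The limit lies in $L^\infty(0,T;H^2\times H^2\times H^2(\mathcal{L}^2))$ and satisfies the stated energy estimate by weak-$*$ lower semicontinuity. Uniqueness follows from the same difference argument applied to any two solutions.

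\textbf{Structural constraints and main obstacle.} Multiplying the $g$-equation by $\psi_\infty$, integrating in $R$, and using $\psi_\infty\nabla_R g\cdot n|_{\partial B}=0$ together with the symmetry identity $\int_B R_i(\nabla_{R_j}\mathcal{U})\psi_\infty\,dR=\delta_{ij}$ (verified by integration by parts against $(1-|R|^2)^k$) gives $\partial_t\!\int_B g\psi_\infty\,dR=0$, propagating the mean-zero constraint. Positivity $1+g>0$ is obtained by a maximum-principle argument applied to $\psi:=(1+g)\psi_\infty$, which satisfies the original Fokker--Planck equation in \eqref{eq0} with coefficients depending only on the now-controlled $u\in L^\infty(0,T;H^2)$. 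The main obstacle throughout is the $\mathcal{E}_0$ closure in the second step: the hypothesis only provides $\epsilon$-smallness at the lowest two orders, and closing at order two requires that the polymer stress be absorbed into the dissipation via Lemma \ref{luoz} and that the Matsumura--Nishida cross term be carried along inside $\mathcal{E}_0$ from the outset.
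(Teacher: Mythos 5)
Your overall architecture --- Picard iteration, a two-tier uniform $H^2$ energy bound exploiting the pressure/divergence and polymer cancellations, an $L^2$ contraction for the differences, and uniqueness by the same difference estimate --- is exactly the paper's, and your treatment of the constraints (propagating $\int_B g\psi_\infty\,dR=0$ by integrating the $g$-equation in $R$, positivity via the Fokker--Planck structure) fills in details the paper omits. However, there is one concrete flaw in how you set up the iteration.

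You put the entire nonlinearity $F,H,G$ at level $n$, keeping only the constant-coefficient principal part at level $n+1$. This makes $F^{n+1}=-\mathrm{div}(\rho^n u^n)$ and the transport part of $G^{n+1}$, namely $-u^n\cdot\nabla g^n$, pure source terms, and the top-order estimate then fails: testing $\nabla^2$ of the continuity equation against $\nabla^2\rho^{n+1}$ produces $(\nabla^2\mathrm{div}(\rho^n u^n),\nabla^2\rho^{n+1})$, whose worst contribution $u^n\cdot\nabla\nabla^2\rho^n$ requires $\|\nabla^3\rho^n\|_{L^2}$ --- one derivative more than the $H^2$ norm you are propagating --- and it cannot be integrated by parts because the indices $n$ and $n+1$ differ. (Kato--Ponce only gives $\|\rho^n u^n\|_{H^3}\lesssim\|\rho^n\|_{H^3}\|u^n\|_{L^\infty}+\|\rho^n\|_{L^\infty}\|u^n\|_{H^3}$, so there is no way around the $H^3$ norm.) The same derivative loss occurs for $u^n\cdot\nabla g^n$ in $H^2(\mathcal{L}^2)$; neither $\rho$ nor $g$ enjoys any smoothing in $x$ that could absorb it. The paper avoids this by linearizing semi-implicitly: the scheme \eqref{approximate} keeps $u^n\cdot\nabla\rho^{n+1}$, $\rho^n\mathrm{div}\,u^{n+1}$, $u^n\cdot\nabla u^{n+1}$, $h(\rho^n)\mathrm{div}\Sigma(u^{n+1})$, $u^n\cdot\nabla g^{n+1}$ and $\psi_\infty^{-1}\nabla_R\cdot(\sigma(u^n)Rg^{n+1}\psi_\infty)$, so that the top-order transport terms pair with the same iterate and can be integrated by parts, e.g. $(u^n\cdot\nabla\nabla^2\rho^{n+1},\nabla^2\rho^{n+1})=-\tfrac12(\mathrm{div}\,u^n,|\nabla^2\rho^{n+1}|^2)$, costing only $\|\nabla u^n\|_{L^\infty}$. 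With that modification (and the attendant two-tier bootstrap $(E^n_0+E^n_1)\le 2\epsilon$, $(E^n_1+E^n_2)\le 2M$ with $T\lesssim\epsilon^{1/2}$ that the paper uses to close the large-data estimate), the rest of your argument goes through as in the paper.
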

		\begin{proof}
			\textbf{Step 1: Constructing approximate solutions}
			
			Starting from $(\rho^{0}(t,x), u^{0}(t,x), g^{0}(t,x,R))=(0, 0, 0)$, we define a sequence $(\rho^n, u^n, g^n)$ of smooth solutions  by solving the following equation:
			\begin{align}\label{approximate}
				\left\{
				\begin{array}{ll}
					\partial_t\rho^{n+1}+{\rm div}u^{n+1}=-\rho^{n}{\rm div}u^{n+1}-u^{n}\cdot\nabla\rho^{n+1}\triangleq F^{n+1} , \\[1ex]
					\partial_tu^{n+1}+\nabla\rho^{n+1}-{\rm div}\Sigma(u^{n+1})-{\rm div}\tau^{n+1}=-u^{n}\cdot\nabla u^{n+1}-h(\rho^n){\rm div}\Sigma{(u^{n+1})}\\[1ex]
					~~~~~~~~~+h(\rho^n) \nabla\rho^{n+1}-h(\rho^n){\rm div}\tau^{n+1}\triangleq H^{n+1}, \\[1ex]
					\partial_tg^{n+1}+\mathcal{L}g^{n+1}+{\rm div}u^{n+1}-\nabla u^{n+1}R\nabla_R\mathcal{U}=-u^{n}\cdot\nabla g^{n+1}\\[1ex]
					~~~~~~~~~-\frac 1 {\psi_\infty}\nabla_R\cdot(\sigma (u^{n})Rg^{n+1}\psi_\infty)\triangleq G^{n+1},  \\[1ex]
					(\rho^{n+1}, u^{n+1}, g^{n+1})|_{t=0}=(\rho_0(x), u_0(x), g_0(x,R)).
				\end{array}
				\right.
			\end{align}
		
	\textbf{Step 2: $(\rho^n, u^n, g^n)$ uniformly bounded in $H^2\times H^2\times H^2(\mathcal{L}^2)$}
	
	For any constant $M>0$, there exists a constant $\epsilon(M)>0$ and $T>0$ such that for any $n\in N$, we have $$(E^n_1+E^n_2)(0)\leq 2M,~(E^n_0+E^n_1)(0)\leq 2\epsilon.$$
	Assume further that
	\begin{align}
			&\sup_{t\in[0,T]}({E}^{n}_1+{E}^{n}_2)(t)+\int_0^T(D^{n}_1+D^{n}_2)(t)dt\leq 2M,\label{initial1'}\\
			&\sup_{t\in[0,T]}(E^{n}_0+E^{n}_1)(t)+\int_0^T(D^{n}_0+D^{n}_1)(t)dt\leq 2\epsilon\label{initial1''}.
	\end{align}
\noindent
Then, we will prove the following estimates
\begin{align}
		&\sup_{t\in[0,T]}(E^{n+1}_0+E^{n+1}_1)(t)+\int_0^T(D^{n+1}_0+D^{n+1}_1)(t)dt\leq 2\epsilon,\label{ee0}\\
&\sup_{t\in[0,T]}(E^{n+1}_1+E^{n+1}_2)(t)+\int_0^T(D^{n+1}_1+D^{n+1}_2)(t)dt\leq 2M.\label{ee1}
\end{align}
To begin with, for any $\beta\in \{0,1,2\},$
by using the fact that $\int_{B}g^{n+1}\psi_{\infty}dR=\int_{B}g_0\psi_{\infty}dR=0$ and integrating by parts, we deduce that
\begin{align}\label{e1}
	&(\nabla^\beta{\rm div}u^{n+1},\nabla^\beta\rho^{n+1})+(\nabla^\beta\nabla\rho^{n+1},\nabla^\beta u^{n+1})=0,\\\label{e2}
	&(\nabla^\beta{\rm div}\tau^{n+1},\nabla^\beta u^{n+1})+\left\langle\nabla^{\beta+1} u^{n+1}R\nabla_R\mathcal{U},\nabla^\beta g^{n+1}\right\rangle\\ \notag
	&=(\nabla^\beta{\rm div}\tau^{n+1},\nabla^\beta u^{n+1})+(\nabla^{\beta+1}u^{n+1},\nabla^\beta\tau^{n+1})=0,
\end{align}
and
\begin{align}\label{e3}
	&\left\langle{\nabla^\beta\rm div}~u^{n+1},\nabla^\beta g^{n+1}\right\rangle=(-1)^{\beta}\int_{\mathbb{R}^2}\nabla^{2\beta}{\rm div}~u^{n+1}\int_{B}g^{n+1}\psi_{\infty}dRdx=0.
\end{align} 
We now give the following estimates.

\textbf{({\rm i}) The energy estimate of ${E}^{n+1}_0(t)$}
 
 Multiplying $\eqref{approximate}_1,\eqref{approximate}_2,\eqref{approximate}_3$ by $\rho^{n+1},u^{n+1}$ and $ g^{n+1}$, respectively, and  integrating the
 resulting equations. 
 Then, using \eqref{e1}-\eqref{e3}, we get
\begin{align*}
	\frac1 2\frac{d}{dt}{E}^{n+1}_0(t)+{D}^{n+1}_0(t)= (F^{n+1},\rho^{n+1})+(H^{n+1},u^{n+1})+\left\langle G^{n+1},g^{n+1}\right\rangle,
\end{align*}
where \begin{align*}
&(F^{n+1},\rho^{n+1})=-(u^n\cdot \nabla \rho^{n+1},\rho^{n+1})-(\rho^n{\rm div}u^{n+1},\rho^{n+1})\triangleq 	\sum_{i=1}^{2}I_i,\\
&(H^{n+1}
		,u^{n+1})=-(u^n\cdot\nabla u^{n+1},u^{n+1})+(h(\rho^n)\nabla\rho^{n+1},u^{n+1})+(h(\rho^n)\Sigma(u^{n+1}),\nabla u^{n+1})\\ \notag
&~~~~~~~~~~~~~~~~~~~~~+(\nabla(h(\rho^n))\Sigma(u^{n+1}),u^{n+1})-(h(\rho^n){\rm div}\tau^{n+1},u^{n+1})\triangleq\sum_{i=3}^{7}I_i,\\
&\left\langle G^{n+1},g^{n+1}\right\rangle=-\left\langle u^n\cdot\nabla g^{n+1},g^{n+1}\right\rangle+\left\langle\sigma(u^n)Rg^{n+1},\nabla_Rg^{n+1}\right\rangle\triangleq \sum_{i=8}^{9}I_i	.
\end{align*}
Using Lemmas \ref{intef}-\ref{luoz}, one has 
\begin{align}\label{ineq0}
	&\|f^n\|_{L^{\infty}}\leq C\|f^n\|^{\frac  12}_{L^2}\|\nabla^2 f^n\|^{\frac  12}_{L^2},~~\|\nabla f^n\|_{L^4}\leq C\|\nabla f^n\|^{\frac  12}_{L^2}\|\nabla^2 f^n\|^{\frac  12}_{L^2},~~\\ \notag
	&\|{\rm div}\tau^n\|_{L^2}\leq C\|\nabla \nabla_Rg^n\|_{L^2(\mathcal{L}^2)},~~\|f^n\|_{L^4}\leq C\|f^n\|^{\frac12}_{L^2}\|\nabla f^n\|^{\frac12}_{L^2}.
\end{align}
Using $\eqref{ineq0}$, there holds
\begin{align*}
	&|I_1|\leq C\|u^n\|_{L^{\infty}}\|\nabla \rho^{n+1}\|_{L^2}\|\rho^{n+1}\|_{L^2}\leq C(E^n_0E^n_2)^{\frac 1 4}(E^{n+1}_0E^{n+1}_1)^{\frac 1 2},
\end{align*}
and
\begin{align*}
(I_2+I_3)+I_4+I_5\leq C(E_0^nE_2^n)^{\frac 14}\Big((E_0^{n+1}D_0^{n+1})^{\frac 12}+(E_0^{n+1}E_1^{n+1})^{\frac 12}+D_0^{n+1}\Big).
\end{align*}
It follows from $\eqref{ineq0}$ that
\begin{align*}
	&|I_6|\leq C\|\nabla\rho^n\|_{L^2}\|\nabla u^{n+1}\|_{L^4}\|u^{n+1}\|_{L^4}\leq  C(E_1^n)^{\frac 12}(E_0^{n+1})^{\frac 14}(D_0^{n+1})^{\frac 12}(D_1^{n+1})^{\frac 14},\\
	&|I_7|\leq C\|\rho^n\|_{L^{\infty}} \|{\rm div}\tau^{n+1}\|_{L^2}\|u^{n+1}\|_{L^2}\leq C(E^n_0E^n_2)^{\frac 1 4}(E^{n+1}_0D^{n+1}_1)^{\frac  12}.
\end{align*}
The terms of $I_8,I_9$ can be estimated as 
\begin{align*}
	&|I_8|\leq C(E^n_0E^n_2)^{\frac 1 4}(E^{n+1}_0D^{n+1}_0)^{\frac 1 2},~~|I_9|\leq C(E^n_1E^n_2)^{\frac 14}(E^{n+1}_0E^{n+1}_1)^{\frac 14}(D^{n+1}_0)^{\frac 12}.
\end{align*}
Gathering the estimates of $I_1-I_9$ and using the conditions of \eqref{initial1'}, \eqref{initial1''}, we arrive at the following estimate
\begin{align}\label{E0}
&\frac 1 2 \frac d {dt} E^{n+1}_0+\Big(
1-C(\epsilon^{\frac 1 4}+\epsilon^{\frac 1 2})	\Big) D^{n+1}_0\leq C(1+\epsilon^{\frac 14}+\epsilon^{\frac 12})E^{n+1}_0+C\epsilon^{\frac14}E_1^{n+1}+C(\epsilon^{\frac 1 4}+\epsilon^{\frac 12})D_1^{n+1}.
\end{align}
\textbf{({\rm ii}) The energy estimate of ${E}^{n+1}_1(t)$}

Applying $\nabla$ to \eqref{approximate} and using \eqref{e1}-\eqref{e3}, we obtain 
\begin{align*}
	\frac 12\frac{d}{dt}E^{n+1}_1(t)+D^{n+1}_{1}(t)=
	&(\nabla F^{n+1},\nabla\rho^{n+1})+(H^{n+1},\nabla^2u^{n+1}) +(\nabla G^{n+1},\nabla g^{n+1}),
\end{align*}
with \begin{align*}
(\nabla F^{n+1},\nabla\rho^{n+1})\triangleq
&\sum\limits_{i=1}^4I'_i\\ \notag=
&-(\nabla u^n\cdot\nabla\rho^{n+1},\nabla\rho^{n+1})+\frac12({\rm div}u^n,|\nabla\rho^{n+1}|^2)\\ \notag
&-(\rho^n\nabla{\rm div}u^{n+1},\nabla\rho^{n+1})-(\nabla\rho^n{\rm div }u^{n+1},\nabla\rho^{n+1}),\\
\left\langle\nabla G^{n+1},\nabla g^{n+1}\right\rangle
&\triangleq\sum\limits_{i=5}^8I'_i\\ \notag
&=-\left\langle\nabla u^n\cdot\nabla g^{n+1},\nabla g^{n+1}\right\rangle+\frac{1}{2}\left\langle{\rm div}u^n,|\nabla g^{n+1}|^2\right\rangle\\\notag
&~~~+\left\langle\sigma(u^n)R\nabla g^{n+1},\nabla\nabla_Rg^{n+1}\right\rangle+\left\langle\nabla\sigma(u^n)Rg^{n+1},\nabla\nabla_Rg^{n+1}\right\rangle.
\end{align*}
It follows from Lemma \ref{intef} that
\begin{align}\label{ineq1'}
	\|\nabla f^n\|_{L^{\infty}}\leq C\|\nabla f^n\|^{\frac 12}_{L^2}\|\nabla^3 f^n\|^{\frac 12}_{L^2},~\|\nabla^2 f^n\|_{L^4}\leq C\|\nabla f^n\|^{\frac 14}_{L^2}        \|\nabla^3 f^n\|^{\frac 34}_{L^2}.                                    
\end{align}
Using $\eqref{ineq1'}$, we  derive that
\begin{align*}
	|I'_1|+|I'_2|\leq  C\|\nabla u^n\|_{L^{\infty}}\|\nabla \rho^{n+1}\|^2_{L^2}\leq C (E^{n}_1D^n_2)^{\frac 14}E^{n+1}_1.
\end{align*}
It follows from $\eqref{ineq1'}$ that
\begin{align*}
|I'_3|\leq C(E^n_0E_2^n)^{\frac 14}(D_1^{n+1}E_1^{n+1})^{\frac 12},
\end{align*}
and
\begin{align*}
	|I'_4|\leq C\|\nabla\rho^n\|_{L^4}\|\nabla u^{n+1}\|_{L^4}\|\nabla\rho^{n+1}\|_{L^2}\leq C (E^n_1E^n_2)^{\frac 14}(D^{n+1}_0D^{n+1}_1)^{\frac 14}(E^{n+1}_1)^{\frac 12}.
\end{align*}
According to Lemma \ref{Lemma1} and $\eqref{ineq1'}$, we derive that
\begin{align*}
	|I'_5|+|I'_6|+|I'_7|\leq C\|\nabla u^n\|_{L^{\infty}}\|\nabla g^{n+1}\|_{L^2(\mathcal{L}^2)}\|\nabla \nabla_Rg^{n+1}\|_{L^2(\mathcal{L}^2)}\leq C (E^{n}_1D^n_2)^{\frac 14}(E^{n+1}_1D^{n+1}_1)^{\frac 12}.
\end{align*}
Using $\eqref{ineq1'}$ and Lemma \ref{Lemma1}, one has
\begin{align*}
		|I'_8|
		&\leq C \|\nabla^2u^n\|_{L^4}\|g^{n+1}\|_{L^4(\mathcal{L}^2)}\|\nabla\nabla_Rg^{n+1}\|_{L^2(\mathcal{L}^2)}\notag \\ 
		&\leq C (E^n_1)^{\frac 18}(D_2^n)^{\frac 38}(E^{n+1}_0E^{n+1}_1)^{\frac 14}(D^{n+1}_1)^{\frac 12}\leq C(E^n_1)^{\frac 18}\Big(D^n_2(E^{n+1}_0+E^{n+1}_1)+D^{n+1}_1\Big).
\end{align*}
Now we give the estimate  of  $(H^{n+1},\nabla^2 u^{n+1})$.  Integrating by parts and using Lemmas \ref{intef}-\ref{luoz},  one arrives at
\begin{align}\label{ineq4'}
	(H^{n+1},\nabla^2 u^{n+1})
	&\leq C\|H^{n+1}\|_{L^2}\|\nabla^2 u^{n+1}\|_{L^2}\\ \notag
	&\leq C(E^n_0E^n_2)^{\frac 14}(D^{n+1}_1)^{\frac 12}\Big((E^{n+1}_1)^{\frac 12}+(D^{n+1}_1)^{\frac 12}+(D^{n+1}_0)^{\frac 12}\Big).
\end{align}
This together with the estimates of $I'_1-I'_8$ and \eqref{initial1'}, \eqref{initial1''}, we  have 
\begin{align}\label{E1}
	&\frac 12\frac{d}{dt}E_1^{n+1}+\Big(1-C(\epsilon^{\frac 18}+\epsilon^{\frac 14})\Big)D^{n+1}_1\\ \notag
	&\leq C\epsilon^{\frac 18}D^n_2E^{n+1}_0+C\Big(\epsilon^{\frac 14}+(\epsilon^{\frac 18}+\epsilon^{\frac 14})D^n_2\Big)E^{n+1}_1+C\epsilon^{\frac 14}D^{n+1}_0.
\end{align}
Define $$A^n_0(t)\triangleq (E^n_0+E^n_1)(t)+\int_0^t(D^n_0+D^n_1)(t)dt'.$$
Combining \eqref{E0} and \eqref{E1}, we have
\begin{align*}
	&\frac 12(E^{n+1}_0+E^{n+1}_1)+\Big(1-C(\epsilon^{\frac12}+\epsilon^{\frac14}+\epsilon^{\frac18})\Big)\int_0^t(D^{n+1}_0+D^{n+1}_1)dt'\\
	&\leq \frac 12(E^{n+1}_0+E^{n+1}_1)(0)+C\Big(T(1+\epsilon^{\frac14})+(\epsilon^{\frac 12}+\epsilon^{\frac14}+\epsilon^{\frac18})\Big)\sup\limits_{t\in [0,T]}A^{n+1}_0(t).
\end{align*}
Taking $T\leq \epsilon^{\frac 12}\leq \frac{1}{32C},$  for any $t\in [0,T]$, one can get  
\begin{align}\label{EE0}
	\sup\limits_{t\in [0,T]}A^{n+1}_0(t)\leq \epsilon.
\end{align}
 \textbf{({\rm iii}) The energy estimate of ${E}^{n+1}_2(t)$}
 
Applying $\nabla^2$ to \eqref{approximate} and multiplying the resulting equation by  $(\nabla^2\rho^{n+1},\nabla^2u^{n+1},\nabla^2g^{n+1})$ and applying \eqref{e1}-\eqref{e3}, we have 
\begin{align*}
	\frac 12\frac{d}{dt}E^{n+1}_2(t)+D^{n+1}_2(t)
	&=(\nabla^2F^{n+1},\nabla^2\rho^{n+1})+(\nabla^2 H^{n+1},\nabla^2u^{n+1})+\left\langle\nabla^2G^{n+1},\nabla^2g^{n+1}\right\rangle.
\end{align*}
Integrating by parts, we get the following equalities
\begin{align*}
	(\nabla^2F^{n+1},\nabla^2\rho^{n+1})
	&=-(\rho^n\nabla^2{\rm div}u^{n+1},\nabla^2\rho^{n+1})-2(\nabla{\rm div}u^{n+1}\nabla\rho^n,\nabla^2\rho^{n+1})\\&~~~-(\nabla^2u^n\cdot\nabla\rho^{n+1},\nabla^2\rho^{n+1})-({\rm div}u^{n+1}\nabla^2\rho^n,\nabla^2\rho^{n+1})\\
	&~~~-2(\nabla^2\rho^{n+1}\nabla u^n,\nabla^2\rho^{n+1})+\frac 12({\rm div}u^n,|\nabla^2\rho^{n+1}|^2)\\ \notag	&\triangleq\sum_{i=1}^6I^{''}_i ,\\	
	(\nabla^2H^{n+1},\nabla^2u^{n+1})
	&=\big(u^n\cdot\nabla^2 u^{n+1}+h(\rho^n)\nabla^2\rho^{n+1}+h(\rho^n)\nabla{\rm div}\Sigma(u^{n+1})\\
	&~~~+h(\rho^n)\nabla {\rm div}\tau^{n+1},\nabla^3u^{n+1}\big) 
	\\
	&~~~+\big(\nabla u^n\cdot\nabla u^{n+1}-\nabla h(\rho^n)\nabla\rho^{n+1}+\nabla h(\rho^n){\rm div}\Sigma(u^{n+1})\\ \notag
	&~~~+\nabla h(\rho^n){\rm div}\tau^{n+1},\nabla^3u^{n+1}\big)\\ \notag
	&\triangleq I''_7+I''_8,
	\\
\left\langle\nabla^2 G^{n+1},\nabla^2g^{n+1}\right\rangle
	&=-\left\langle\nabla^2u^n\cdot\nabla g^{n+1},\nabla^2g^{n+1}\right\rangle+2\left\langle\nabla\sigma(u^n)R\nabla g^{n+1},\nabla^2\nabla_{R}g^{n+1}\right\rangle\\ \notag 
	&~~~-2\left\langle\nabla u^n\cdot \nabla^2g^{n+1},\nabla^2g^{n+1}\right\rangle+\frac{1}{2}\left\langle{\rm div}u^n,|\nabla^2g^{n+1}|^2\right\rangle\\ \notag &~~~-\left\langle\sigma(u^n)R\nabla^2g^{n+1},\nabla^2\nabla_Rg^{n+1}\right\rangle+\left\langle\nabla^2\sigma(u^n)Rg^{n+1},\nabla^2\nabla_Rg^{n+1}\right\rangle\\ \notag
	&\triangleq\sum\limits_{i=9}^{14}I''_{i}\\ \notag.
\end{align*}
It follows from Lemma \ref{intef} that
\begin{align*}
	&|I''_1|\leq C\|\nabla^3 u^{n+1}\|_{L^2}\|\rho^n\|_{L^\infty}\|\nabla^2\rho^{n+1}\|_{L^2}\leq C  (E^n_0E^n_2)^{\frac 14}(E^{n+1}_2)^{\frac 12}(D^{n+1}_2)^{\frac 12},\\
&|I''_2|\leq C\|\nabla^2 u^{n+1}\|_{L^4}\|\nabla\rho^n\|_{L^4}\|\nabla^2\rho^{n+1}\|_{L^2}\leq C  (E^n_1E^n_2)^{\frac 14}(E^{n+1}_2)^{\frac 34}(D^{n+1}_2)^{\frac 14},
\end{align*}
and
\begin{align*}
	|I''_3|\leq C(E^n_1)^{\frac 18}(D^n_2)^{\frac 38}(E^{n+1}_1)^{\frac 14}(E^{n+1}_2)^{\frac 34}.
\end{align*}
According to \eqref{ineq0} and \eqref{EE0} with $(E^{n+1}_1)^{\frac 14}\leq C\epsilon^{\frac14},$ one has
\begin{align*}
	|I_4^{''}|
	&\leq C\|{\rm div}u^{n+1}\|_{L^{\infty}}\|\nabla^2\rho^{n+1}\|_{	L^2}\|\nabla^2\rho^n\|_{L^2}\\ \notag
	&\leq C(E^{n+1}_1)^{\frac 14}(D^{n+1}_2)^{\frac 14}(E^{n+1}_2E^n_2)^{\frac12}\leq  C\epsilon^{\frac14}(E^n_2+E^{n+1}_2+D^{n+1}_2).
\end{align*}
Moreover, one can derive that
\begin{align*}
	|I''_5|+|I''_6|\leq C(E^n_1D^n_2)^{\frac 14}E^{n+1}_2,
\end{align*}
and
\begin{align*}
	&|I''_7|\leq C(E^n_0E^n_2)^{\frac 14}(D^{n+1}_2)^{\frac 12}\Big((D^{n+1}_1)^{\frac 12}+(D^{n+1}_2)^{\frac 12}+(E^{n+1}_2)^{\frac 12}\Big).
\end{align*}
The estimates of $\sum\limits_{i=8}^{10}I''_i,  (\sum\limits_{i=11}^{13}I''_i)$ can be handled  as  $I''_7, (I''_2)$,
\begin{align*}
	&|I''_8|\leq C(E^n_1E^n_2)^{\frac 14}(D^{n+1}_2)^{\frac 12}\Big((E^{n+1}_1E^{n+1}_2)^{\frac 14}+(D^{n+1}_1D^{n+1}_2)^{\frac 14}\Big),\\	&|I''_9|+|I''_{10}|\leq C(E^n_1)^{\frac 18}(D^n_2)^{\frac 38}(E^{n+1}_1E^{n+1}_2)^{\frac 14}(D^{n+1}_2)^{\frac 12},
\end{align*}
and
\begin{align*}
	&\sum\limits_{i=11}^{13}|I''_i|\leq C(E^n_1D^n_2)^{\frac 14}(E^{n+1}_2D^{n+1}_2)^{\frac 12}.
\end{align*}
Using Lemmas \ref{intef}, \ref{Lemma1}, we obtain
\begin{align*}
	|I''_{14}|\leq C\|g^{n+1}\|_{L^{\infty}(\mathcal{L}^2)}\|\nabla^3 u^n\|_{L^2}\|\nabla^2\nabla_Rg^{n+1}\|_{L^2(\mathcal{L}^2)}\leq C(E^{n+1}_0)^{\frac 12}\Big((D^{n+1}_2D^n_2)^{\frac 12}(E^{n+1}_2)^{\frac 12}\Big).
\end{align*}
Combining the estimates of $I''_1-I''_{14}$ and \eqref{initial1'},\eqref{initial1''}, we have
\begin{align}\label{E2}
	\frac 12\frac{d}{dt}E^{n+1}_2+\Big(1-C(\epsilon^{\frac 12}+\epsilon^{\frac 14}+\epsilon^{\frac 18})\Big)D^{n+1}_2
	&\leq C\Big(\epsilon^{\frac 14}+\epsilon^{\frac 18}+(\epsilon^{\frac 12}+\epsilon^{\frac 14}+\epsilon^{\frac 18})D^n_2\Big)E^{n+1}_2\\ \notag
	&~~~+C(\epsilon^{\frac14}+\epsilon^{\frac18}D^n_2)E^{n+1}_1+C\epsilon^{\frac 14}E^n_2+C\epsilon^{\frac 14}D^{n+1}_1.
\end{align}
 Denote $$B^{n+1}(t)=(E^{n+1}_1+E^{n+1}_2)(t)+\int_0^t(D^{n+1}_1+D^{n+1}_2)(t')dt'.$$ According to  \eqref{E1}-\eqref{E2} and using the conditions of \eqref{initial1'},\eqref{initial1''} yields 
\begin{align}\label{energy2}
	\Big(\frac 12-C[T(\epsilon^{\frac 14}+\epsilon^{\frac 18})+(\epsilon^{\frac 12}+\epsilon^{\frac 14}+\epsilon^{\frac 18})]\Big)B^{n+1}(t)\leq \frac{M}{2}+CT\epsilon^{\frac14},
\end{align}
with $T\leq\epsilon^{\frac 12}\le \frac{1}{32C}.$
Therefore, we obtain
\begin{align}\label{E2'}
	\sup\limits_{t\in [0,T]}B^{n+1}(t)\leq M.
\end{align}
Furthermore,  combining \eqref{EE0} and \eqref{E2'}, we arrive at \eqref{ee0},\eqref{ee1}.

\textbf{Step 3:}~$(\rho^n,u^n,g^n)\to (\rho,u,g)$ in $L^2\times L^2\times L^2(\mathcal{L}^2),n\to \infty$

Denote $$\bar{\rho}^{n+1}=\rho^{n+1}-\rho^{n}, ~~\bar{u}^{n+1}=u^{n+1}-u^{n}, ~~\bar{g}^{n+1}=g^{n+1}-g^{n}.$$
Then $(\bar{\rho}^{n+1},\bar{u}^{n+1},\bar{g}^{n+1})$ satisfy the following equations
\begin{align}\label{convergence}
	\left\{
	\begin{array}{ll}
		\bar{\rho}^{n+1}_t+{\rm div}\bar{u}^{n+1}=-\bar{\rho}^{n}{\rm div}u^{n}-u^{n}\cdot\nabla\bar{\rho}^{n+1}-\rho^{n}{\rm div}\bar{u}^{n+1}-\bar{u}^{n}\cdot\nabla\rho^{n}\triangleq \sum\limits_{i=1}^4l_i , \\
		\bar{u}^{n+1}_t+\nabla\bar{\rho}^{n+1}-{\rm div}\Sigma(\bar{u}^{n+1})-{\rm div}{\bar{\tau}}^{n+1}=-(h(\rho^{n})-h(\rho^{n-1})){\rm div}~\tau^{n}-(h(\rho^{n})\\ [1ex]
		~~~~~~-h(\rho^{n-1})){\rm div}\Sigma{(u^{n})}+(h(\rho^{n})-h(\rho^{n-1}))\nabla\rho^{n}  
	-u^{n}\cdot\nabla \bar{u}^{n+1}\\ [1ex]
	~~~~~~-\bar{u}^{n}\cdot\nabla u^{n}-h(\rho^{n}){\rm div}\Sigma{(\bar{u}^{n+1})}+h(\rho^{n}) \nabla\bar{\rho}^{n+1}-h(\rho^{n}) {\rm div}\bar{\tau}^{n+1}\triangleq \sum\limits_{i=5}^{12}l_i, \\[1ex]
		\bar{g}^{n+1}_t+\mathcal{L}\bar{g}^{n+1}+{\rm div}\bar{u}^{n+1}-\nabla \bar{u}^{n+1}R\nabla_R\mathcal{U}=-u^{n}\cdot\nabla \bar{g}^{n+1}\\ ~~~~~~-\frac 1 {\psi_\infty}\nabla_R\cdot(\sigma(u^{n})R\bar{g}^{n+1}\psi_\infty)-\bar{u}^{n}\cdot\nabla g^{n}-\frac 1 {\psi_\infty}\nabla_R\cdot(\sigma(\bar{u}^{n})Rg^{n}\psi_\infty)
		\triangleq\sum\limits_{i=13}^{16}l_i.  \\[1ex]
	\end{array}
	\right.
\end{align}
Multiplying  $\eqref{convergence}_1$,$\eqref{convergence}_2$,$\eqref{convergence}_3$ by $\bar{\rho}^{n+1},\bar{u}^{n+1},\bar{g}^{n+1}$ and using \eqref{e1}-\eqref{e3} yields
\begin{align*}
\frac 12 \frac {d} {dt}\bar{E}^{n+1}_0+\bar{D}^{n+1}_0=\sum\limits_{i=1}^4(l_i,\bar{\rho}^{n+1})+\sum\limits_{i=5}^{12}(l_i,\bar{u}^{n+1})+\sum\limits_{i=13}^{16}\left\langle l_i,\bar{g}^{n+1}\right\rangle\triangleq\sum\limits_{i=1}^{16}L_i.
\end{align*}
Using \eqref{ineq1'} and integrating by parts with $L_2=({\rm div}u^n,|\bar{\rho}^{n+1}|^2),$  we have 
\begin{align*}
	|L_1|+|L_2|&\leq C\|\nabla u^n\|_{L^{\infty}}\|\bar{\rho}^{n+1}\|_{L^2}(\|\bar{\rho}^{n}\|_{L^2}+\|\bar{\rho}^{n+1}\|_{L^2})\\ \notag
	&\leq C(E^n_1D^n_2)^{\frac14}(\bar{E}^{n+1}_0)^{\frac12}\Big((\bar{E}^{n}_0)^{\frac12}+(\bar{E}^{n+1}_0)^{\frac12}\Big).
\end{align*}
For $L_3,$ we get
\begin{align*}
	|L_3|\leq C(E^n_0E^n_2)^{\frac 14}(\bar{E}^{n+1}_0\bar{D}^{n+1}_0)^{\frac12}.
\end{align*}
Using  $\eqref{ineq0}$, one can obtain
\begin{align*}
	|L_4|\leq C \|\nabla\rho^n\|_{L^4}\|\bar{u}^n\|_{L^4}\|\bar{\rho}^{n+1}\|_{L^2}\leq C (E^n_1E^n_2)^{\frac 14}(\bar{E}^{n}_0\bar{D}^{n}_0)^{\frac 14}(\bar{E}^{n+1}_0)^{\frac 12}.
\end{align*}
The terms of $\sum\limits_{i=5}^{9}L_i$ can be estimated as follows. 
\begin{align*}
	&\sum\limits_{i=5}^7|L_i|\leq C(\bar{E}^n_0)^{\frac1 2}(\bar{E}^{n+1}_0)^{\frac 12}\Big((E^n_1E^n_2)^{\frac1 4}+(E^n_1)^{\frac1 8}(D^n_2)^{\frac38}+(E^n_1E^n_2D^n_1D^n_2)^{\frac 18}\Big),\\
	&|L_8|+|L_{9}|\leq C(E^n_1D^n_2)^{\frac 14}(\bar{E}^n_0\bar{E}^{n+1}_0)^{\frac 12}+C(E^n_0E^n_2)^{\frac 14}(\bar{E}^{n+1}_0\bar{D}^{n+1}_0)^{\frac 12}.
\end{align*}
Performing an integration  by parts and making use of Lemmas \ref{intef}, \ref{Lemma1}, we have
\begin{align}\label{u1}
	|L_{10}|
	&\leq |(\nabla h(\rho^n)\Sigma(\bar{u}^{n+1}),\bar{u}^{n+1})|+|(h(\rho^n)\Sigma(\bar{u}^{n+1}),\nabla\bar{u}^{n+1})|\\ \notag 
	&\leq C\|\nabla \bar{u}^{n+1}\|_{L^2}\Big(\|\nabla\rho^n\|_{L^4}\|\bar{u}^{n+1}\|_{L^4}+\|\rho^n\|_{L^{\infty}}\|\nabla\bar{u}^{n+1}\|_{L^2}\Big)\\ \notag 
	&\leq C(\bar{D}^{n+1}_0)^{\frac 12}\Big( (E^n_1E^n_2)^{\frac 14}(\bar{E}^{n+1}_0\bar{D}^{n+1}_0)^{\frac 14}+(E^n_0E^n_2)^{\frac 14}(\bar{D}^{n+1}_0)^{\frac 12}\Big),
\end{align}
and
\begin{align*}
	|L_{11}|&\leq C(E^n_1E^n_2)^{\frac 14}\bar{E}^{n+1}_0+ C(E^n_0E^n_2)^{\frac 14}(\bar{E}^{n+1}_0\bar{D}^{n+1}_0)^{\frac 12},~~\\ \notag
	|L_{12}|&\leq C(E^n_1E^n_2)^{\frac 14}(\bar{E}^{n+1}_0\bar{D}^{n+1}_0)^{\frac 12}+C(E^n_0E^n_2)^{\frac 14}\bar{D}^{n+1}_0.
\end{align*}
For $\sum\limits_{i=13}^{16}L_i,$ we have the following estimates:
\begin{align*}
	&|L_{13}|+|L_{14}|\leq C(E^n_1D^n_2)^{\frac 14}(\bar{E}^{n+1}_0\bar{D}^{n+1}_0)^{\frac12}\leq C\epsilon^{\frac 14}(D^n_2\bar{E}^{n+1}_0+\bar{E}^{n+1}_0+\bar{D}^{n+1}_0),~~\\
	&|L_{15}|\leq C(E^n_1E^n_2)^{\frac 14}(\bar{D}^{n+1}_0)^{\frac 12}(\bar{E}^n_0\bar{D}^n_0)^{\frac 14},~~~~|L_{16}|\leq C(E^n_0E^n_2)^{\frac 14}(\bar{D}^n_0\bar{D}^{n+1}_0)^{\frac 12}.
\end{align*}
Therefore, by adding up the estimates for $L_1-L_{16}$ and using again \eqref{initial1'}, \eqref{initial1''}, we conclude that
\begin{align}\label{convl2}
	\frac 12\frac{d}{dt}\bar{E}^{n+1}_0(t)+\bar{D}^{n+1}_0(t)&\leq C
\epsilon^{\frac 18}\bar{E}^{n+1}_0(t)+C\epsilon^{\frac18}\bar{D}^{n+1}_0(t)\\ \notag
&~~~+C\Big(\epsilon^{\frac18}+\epsilon^{\frac 18}(D^n_1+D^n_2)\Big)\bar{E}^n_0(t)+C\epsilon^{\frac 14}\bar{D}^n_0(t).
\end{align}
For convenience, we give the following notation $$\bar{A}^{n}(t)\triangleq\bar{E}^n_0(t)+\int_0^t\bar{D}^n_0(t')dt'.$$ It follows from \eqref{convl2} that
\begin{align*}
	\sup\limits_{t'\in[0,T]}\bar{A}^{n+1}(t)\leq  C(T+\epsilon^{\frac 18})\sup\limits_{t\in[0,T]}\bar{A}^{n}(t),
\end{align*}
where we used the fact that $\bar{E}^{n+1}_0(0)=0.$

Choosing $\epsilon,T$ sufficiently small and making use of the compactness argument, we conclude that the system \eqref{eq1} exists a  local solution with $\sup\limits_{t\in [0,T]}\mathcal{E}^{n+1}_0(t)+\int_0^T\mathcal{D}^{n+1}_0(t)dt\leq 2M.$

\textbf{Step 4:}~\textbf{Uniqueness}
\par
Assume  $(\rho_1,u_1,g_1)$ and $(\rho_2,u_2,g_2)$ are two solutions of \eqref{eq1}  with the same initial data. Then $\bar{\rho}=\rho_1- \rho_2,\bar{u}=u_1- u_2, \bar{g}=g_1- g_2$ satisfy the following equations
	\begin{align}\label{uniqueness}
	\left\{
	\begin{array}{ll}
	\bar{\rho}_t+{\rm div}\Sigma(\bar{u})=-u_1\cdot \nabla 	\bar{\rho}-{\rm div}u_1\bar{\rho} -{\rm div} \bar{u} \rho_2-\bar{u}\cdot \nabla\rho_2\triangleq\bar{F}, \\[1ex]
		\bar{u}_t+\nabla\bar{\rho}-{\rm div}\Sigma(\bar{u})-{\rm div}\bar{\tau}=-h(\rho_1){\rm div}\Sigma(\bar{u})+h(\rho_1)\nabla\bar{\rho}\\[1ex]~~~-h(\rho_1){\rm div}\bar{\tau}-(h(\rho_1)-h(\rho_2)){\rm div}\Sigma(\bar{u}_2)+(h(\rho_1)-h(\rho_2))\nabla\rho_2\\[1ex]~~~	 
		-(h(\rho_1)-h(\rho_2)){\rm div}\tau_2+u_1\cdot\nabla\bar{u}-\bar{u}\cdot\nabla u_2\triangleq\bar{H}, \\[1ex]
	\bar{g}_t+\mathcal{L}\bar{g}+{\rm div}\bar{u}-\nabla\bar{u}R\nabla_R\mathcal{U}=-u_1\cdot\nabla\bar{g}-\bar{u}\cdot\nabla g_2-\frac{1}{\psi_{\infty}}\nabla_{R}\cdot(\sigma(u_1)R\bar{g}\psi_{\infty})\\[1ex]
	~~~-\frac{1}{\psi_{\infty}}\nabla_{R}\cdot(\sigma(\bar{u})R{g_2}\psi_{\infty})\triangleq\bar{G}.  \\[1ex]
	\end{array}
	\right.
\end{align}
Taking $L^2$ inner product of \eqref{uniqueness}, we deduce that the following estimates:
\begin{align}\label{uq0}
		\frac 12 \frac{d}{dt}\|\bar{\rho}\|^2_{L^2}+({\rm div}\bar{u},\bar{\rho})\leq C\Big(\|\nabla u_1\|_{L^{\infty}}+\|\rho_2\|_{L^{\infty}}\Big)\|\bar{\rho}\|^2_{L^2}+C\|\bar{\rho}\|_{L^2}\|\nabla\rho_2\|_{L^4}\|\bar{u}\|_{L^4},
		\end{align}
	and
	\begin{align}\label{uq1}
			&\frac 1 2\frac{d}{dt}\|\bar{u}\|^2_{L^2}+(\nabla\bar{\rho},\bar{u})+\mu\|\nabla \bar{u}\|^2_{L^2}+(\mu+\mu')\|\nabla \bar{u}\|^2_{L^2}\\ \notag
			&\leq C\Big(\|\nabla\bar{u}\|_{L^2}+\|\bar{\rho}\|_{L^2}+\|\bar{\tau}\|_{L^2}\Big)\Big(\|\nabla\rho_1\|_{L^4}\|\bar{u}\|_{L^4}+\|\rho_1\|_{L^{\infty}}\|\nabla\bar{u}\|_{L^2}\Big)\\ \notag
			&~~~+C\|\bar{\rho}\|_{L^2}\|\bar{u}\|_{L^4}\Big(\|\nabla^2u_2\|_{L^4}+\|\nabla\rho_2\|_{L^4}+\|\nabla\tau_2\|_{L^4}\Big)+C\Big(\|\nabla u_1\|_{L^{\infty}}+\|\nabla u_2\|_{L^{\infty}}\Big)\|\bar{u}\|^2_{L^2}.
\end{align}
For the third equation of \eqref{uniqueness}, we have
\begin{align}\label{uq2}
		\frac {1}{2}\frac{d}{dt}\|\bar{g}\|^2_{L^2(\mathcal{L}^2)}+\|\nabla_R\bar{g}\|^2_{L^2(\mathcal{L}^2)}
		&\leq C\|\nabla u_1\|_{L^{\infty}}\|\bar{g}\|_{L^2(\mathcal{L}^2)}\|\nabla_R\bar{g}\|_{L^2(\mathcal{L}^2)}\\ \notag
		&~~~+C\|\nabla_R\bar{g}\|_{L^2(\mathcal{L}^2)}\Big(\|\bar{u}\|_{L^4}\|\nabla g_2\|_{L^4}+\|g_2\|_{L^{\infty}(\mathcal{L}^2)}\|\nabla\bar{u}\|_{L^2}\Big).
\end{align}
Denote
\begin{align*}
	&\bar{E}_0(t)=\|\bar{u}\|^2_{L^2}+\|\bar{\rho}\|^2_{L^2}+\|\bar{g}\|^2_{L^2(\mathcal{L}^2)},\\
	&\bar{D}_0(t)=\mu\|\nabla \bar{u}\|^2_{L^2}+(\mu+\mu')\|\nabla \bar{u}\|^2_{L^2}+\|\nabla_R\bar{g}\|^2_{L^2(\mathcal{L}^2)}.
\end{align*} 
Combining \eqref{uq0}-\eqref{uq2}, one has
\begin{align}\label{uq00}
\bar{E}_0(t)+\int_0^TD_0(t)dt\leq C\bar{E}_0(0)e^{\int_0^TC_1(t)dt},
\end{align}
where $C_1(t)=C\Big(\sum\limits_{i=1}^2(\|\nabla u_i\|_{L^{\infty}}+\|\rho_i\|_{L^{\infty}}+\|\nabla\rho_i\|_{L^4})+\|\nabla \tau_2\|_{L^4}\Big).$

According to \textbf{Step1-Step 3}, we infer that \eqref{eq1} exists a solution $(\rho,u,g)\in H^2\times H^2\times H^2(\mathcal{L}^2).$ Then, using  interpolation method and the fact that $\bar{E}_0(0)=0,$ we arrive at $(\rho_1,u_1,g_1)=(\rho_2,u_2,g_2).$

Therefore, according to \textbf{Step~1}-\textbf{Step~4}, we infer that the system \eqref{eq1} has a unique local solution. The proof of Proposition \ref{local existence } is completed.
	\end{proof}
\subsection{\textbf{Global regularity}}
	\par
	In this subsection, we give the proof of global solutions for \eqref{eq1} with large data by the dissipative term structure and the  interpolation method. Note that the dissipative of $(\rho,u,g)$ plays a key role and cannot be removed. Before giving the proof of the global solutions, we firstly give the global prior estimate of each derivative
	 (see Lemmas \ref{lemma1}, \ref{lemma2}, \ref{lemma3}). Then, using the bootstrap argument, we can obtain that the solutions exists global in time.

 Let the initial value satisfy \eqref{initial} and assume that  $(\rho,u,g)$ is a local  solution to the system \eqref{eq1}. In addition, assume further that $(\rho,u,g)$ satisfy the following conditions
\begin{align}\label{initial1}
E_1(t)+E_2(t)\leq 2M,~~and~~E_0(t)+E_1(t)\leq 2\epsilon(M).
\end{align}
For any $\beta\in \{0,1,2\},$ from $\int_B g\psi_{\infty}dR=0,$ one can obtain the following facts
\begin{align}\label{fa}
	&(\nabla^{\beta}{\rm div}u,\nabla^{\beta}\rho)+(\nabla^{\beta+1} \rho,\nabla^{\beta}u)=-(\nabla^{\beta}u,\nabla^{\beta+1}\rho)+(\nabla^{\beta+1} \rho,\nabla^{\beta}u)=0,\\\notag
	&\left\langle\nabla^{\beta}{\rm div}u,\nabla^{\beta}g\right\rangle=(-1)^{\beta}\int_{\mathbb{R}^2}\nabla^{2\beta}{\rm div}u\int_{B}g\psi_{\infty}dRdx=0, \\ \notag
&\left\langle\nabla^{\beta+1} uR\nabla_R\mathcal{U},\nabla^{\beta}g\right\rangle+(\nabla^{\beta}u,\nabla^{\beta}{\rm div}\tau)=(\nabla^{\beta+1} u,\nabla^{\beta}\tau)-(\nabla^{\beta+1}u,\nabla^{\beta}\tau)=0. 
\end{align}
Now  we present the following lemma.
\begin{lemm}\label{lemma1}
	Let $(\rho, u,g)$ be a local solution for \eqref{eq1} with the initial data $(\rho_0,u_0,g_0)$ under the conditions of Theorem \ref{global solution'} and satisfy \eqref{initial1}. Then, we have 
	\begin{align*}
		\frac{d}{dt}E_0(t)+D_0(t)\leq C(\epsilon^{\frac 12}+\epsilon^{\frac 14})(D_1(t)+\|\nabla\rho\|^2_{L^2}),	\end{align*}
	and
	\begin{align*}
	\frac{d}{dt} (u,\nabla \rho)+\frac 12\|\nabla \rho\|^2_{L^2}\leq C(\epsilon^{\frac 12}+\epsilon^{\frac 14})(D_0+D_1)(t)+{C}\Big(D_1(t)+\|\nabla u\|^2_{H^1}\Big).
	\end{align*}
\end{lemm}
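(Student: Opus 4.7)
The plan is to derive the two inequalities by adapting the $n$-independent bounds from Step~2 of Proposition \ref{local existence } to the actual solution $(\rho,u,g)$, and then, for the second one, to carry out the standard cross-term manipulation that recovers the missing $\|\nabla\rho\|_{L^2}$ dissipation in CNS-type systems.

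\textbf{First estimate.} I would take the $L^2$ (resp.\ $L^2(\mathcal{L}^2)$) inner product of $\eqref{eq1}_1,\eqref{eq1}_2,\eqref{eq1}_3$ against $\rho,u,g$, add the three identities, and use the cancellations collected in \eqref{fa} to eliminate the linear coupling $(\nabla\rho,u)$, $({\rm div}\tau,u)+\langle\nabla uR\nabla_R\mathcal{U},g\rangle$ and $\langle{\rm div}\,u,g\rangle$. This produces
\begin{align*}
\tfrac{1}{2}\tfrac{d}{dt}E_0(t)+D_0(t)=(F,\rho)+(H,u)+\langle G,g\rangle.
\end{align*}
The right-hand side is exactly the sum $I_1+\cdots+I_9$ analyzed in Step~2 of Proposition \ref{local existence }, with the iteration indices $n$ and $n{+}1$ identified. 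Applying Lemmas \ref{intef}, \ref{Lemma1}, \ref{luoz} together with $E_1+E_2\le 2M$ and $E_0+E_1\le 2\epsilon$, each term is controlled by a product of $\epsilon^{1/4}$ or $\epsilon^{1/2}$ with one of $D_0,D_1,E_0,E_1$ or $\|\nabla\rho\|^2_{L^2}$. The $D_0$-contribution absorbs on the left, yielding the claimed bound.

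\textbf{Second estimate (cross term).} Differentiating $(u,\nabla\rho)$ in time and substituting from $\eqref{eq1}_1$--$\eqref{eq1}_2$,
\begin{align*}
\tfrac{d}{dt}(u,\nabla\rho)&=(u_t,\nabla\rho)+(u,\nabla\rho_t)\\
&=(-\nabla\rho+{\rm div}\Sigma(u)+{\rm div}\tau+H,\nabla\rho)+(u,-\nabla{\rm div}\,u+\nabla F),
\end{align*}
so the first bracket produces the essential dissipation $-\|\nabla\rho\|^2_{L^2}$, which I move to the left. Integration by parts gives $-(u,\nabla{\rm div}\,u)=\|{\rm div}\,u\|^2_{L^2}\le\|\nabla u\|^2_{H^1}$, fitting the right-hand side of the lemma. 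The remaining linear terms $({\rm div}\Sigma(u),\nabla\rho)$ and $({\rm div}\tau,\nabla\rho)$ are split by Young's inequality into $\tfrac14\|\nabla\rho\|^2_{L^2}+C(\|\nabla^2 u\|^2_{L^2}+\|\nabla\nabla_R g\|^2_{L^2(\mathcal{L}^2)})$; the last two pieces are part of $D_1$. The nonlinear contributions $(H,\nabla\rho)$ and $(u,\nabla F)=-({\rm div}\,u,F)$ are treated by the same interpolation tools as $I_1$--$I_9$, producing a bound of the form $C(\epsilon^{1/4}+\epsilon^{1/2})(D_0+D_1)$ under the smallness/boundedness hypotheses on $(E_0,E_1,E_2)$. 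Collecting everything and leaving $\tfrac12\|\nabla\rho\|^2_{L^2}$ on the left yields the inequality.

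\textbf{Main obstacle.} The delicate point is the contribution $(h(\rho)\nabla\rho,\nabla\rho)$ hidden inside $(H,\nabla\rho)$: it carries a pure $\|\nabla\rho\|^2_{L^2}$ factor whose prefactor must be genuinely small, since we want to retain $\tfrac12\|\nabla\rho\|^2_{L^2}$ on the left. The remedy is the interpolation $\|h(\rho)\|_{L^\infty}\lesssim\|\rho\|_{L^\infty}\lesssim(E_0E_2)^{1/4}\lesssim(\epsilon M)^{1/4}$, which is precisely small. A similar vigilance is required for the composite term $(h(\rho)\,{\rm div}\tau,\nabla\rho)$, which must be distributed between $D_1$ and $\|\nabla\rho\|^2_{L^2}$ via an $\epsilon$-weighted Young inequality so that only the advertised $(\epsilon^{1/4}+\epsilon^{1/2})$ constants appear; an $O(1)$ leakage here would kill the absorption and force reopening the whole priori bound.
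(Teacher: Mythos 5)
Your proposal is correct and follows essentially the same route as the paper: the same energy identity via the cancellations \eqref{fa} with term-by-term interpolation under \eqref{initial1} for the first inequality, and the same cross-term $\frac{d}{dt}(u,\nabla\rho)$ computation (identical to \eqref{ineq8}) with Young-splitting of the linear terms and a smallness check on $\|h(\rho)\|_{L^\infty}\lesssim (E_0E_2)^{1/4}$ for the second. The delicate point you flag about the $(h(\rho)\nabla\rho,\nabla\rho)$ contribution is exactly how the paper absorbs that term into the $\frac34-C(\epsilon^{1/2}+\epsilon^{1/4})$ coefficient in \eqref{endiss}.
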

\begin{proof}
Taking the $L^2$ inner product of \eqref{eq1} with $(\rho,u,g)$, using  $\eqref{fa}$, we infer that
	\begin{align}\label{ineq4}
		\frac{1}{2}	\frac{d}{dt}E_0(t)+D_0(t)=\sum\limits_{i=1}^8J_i,
	\end{align}
with
	\begin{align}\label{ineq5}
		\sum\limits_{i=1}^8J_i
		&\triangleq (\rho u,\nabla \rho)+(h(\rho)\nabla\rho,u)-(h(\rho){\rm div}\tau,u)-(u\cdot \nabla u,u)-\left\langle u\cdot\nabla g,g\right\rangle\\ \notag
		&~~~+(\nabla(h(\rho))\Sigma(u),u)+(h(\rho)\Sigma(u),\nabla u)+\left\langle\sigma(u)Rg,\nabla_Rg\right\rangle.
	\end{align}
Notice that
\begin{align}\label{luoz1}
	\|f\|_{L^4}\leq C\|f\|^{\frac 12}_{L^2}\|\nabla f\|^{\frac 12}_{L^2},~~\|\nabla f\|_{L^4}\leq C\|\nabla f\|^{\frac 12}_{L^2}\|\nabla^2 f\|^{\frac 12}_{L^2}.
\end{align}
According to $\eqref{luoz1}$ and Lemmas \ref{Lemma1}, \ref{luoz}, we have
\begin{align*}
		\sum\limits_{i=1}^4J_i\leq C\|u\|_{L^4}\|\rho\|_{L^4}(\|\nabla\rho\|_{L^2}+\|\nabla\tau\|_{L^2})+C\|u\|^2_{L^4}\|\nabla u\|_{L^2}\leq CE_0^{\frac 12}D^{\frac 14}_0\Big(\|\nabla\rho\|^{\frac 32}_{L^2}+D^{\frac 34}_0\Big),
\end{align*}
and
\begin{align*}
	J_5\leq C\| u\|_{L^4}\|g\|_{L^4(\mathcal{L}^2)}\|\nabla \nabla_Rg\|_{L^2(\mathcal{L}^2)}\leq C E^{\frac 12}_0D^{\frac 12}_0D^{\frac 12}_1.
\end{align*}
For $\sum\limits_{i=6}^8J_i,$ by $\eqref{luoz1}$ and Lemma \ref{luoz}, one has
\begin{align*}
&J_6+J_7\leq C\|\nabla u\|_{L^4}\Big(\|\nabla\rho\|_{L^2}\| u\|_{L^4}+\|\rho\|_{L^2}\| \nabla u\|_{L^4}\Big)\\ \notag&~~~~~~~~~~\leq C\|\nabla\rho\|_{L^2}E^{\frac 14}_0E^{\frac 14}_1D^{\frac 14}_0D^{\frac 14}_1+C\|\rho\|_{L^2}D^{\frac 12}_0D^{\frac 12}_1,\\
&J_8\leq C\|\nabla u\|_{L^4}\|g\|_{L^4(\mathcal{L}^2)}\| \nabla_Rg\|_{L^2(\mathcal{L}^2)}\leq CE^{\frac 14}_0E^{\frac 14}_1D_0^{\frac 34}D_1^{\frac 14}.
\end{align*}
Using the estimates of $J_1-J_8$ and the condition of \eqref{initial1}, we arrive at
\begin{align*}
	\frac12\frac{d}{dt}E_0+\Big(1-C(\epsilon^{\frac 12}+\epsilon^{\frac 14})\Big)D_0\leq C(\epsilon^{\frac 12}+\epsilon^{\frac 14})(D_1+\|\nabla\rho\|^2_{L^2}).
\end{align*}

	Note that $\|\nabla \rho\|^{ 2}_{L^2}$ appears on the right of the energy estimate in $E_0$, and there is no $\|\nabla \rho\|^{ 2}_{L^2}$ in $D_0$ to absorb it. In order to handle this difficulty, we multiply  $\nabla\rho$ to the second equation of \eqref{eq1} to generate a dissipative term on the density $\rho.$ Then, we deduce that 
	\begin{align}\label{ineq8}
		\frac{d}{dt}( u,\nabla \rho)+\|\nabla \rho\|^2_{L^2}=\|{\rm div}u\|^2_{L^2}+(u,\nabla F)+(H,\nabla \rho)+({\rm div}\tau,\nabla \rho)+({\rm div}\Sigma(u),\nabla\rho).
	\end{align}
	Integrating by parts and using \eqref{luoz1}, we have
	\begin{align*}
		(u,\nabla F)&=({\rm div}u,u\cdot\nabla\rho)+(|{\rm div}u|^2,\rho)\\ \notag
		&\leq C\|\nabla u\|_{L^4}\|u\|_{L^4}\|\nabla\rho\|_{L^2}+C\|\nabla^2 u\|_{L^2}\|\nabla u\|_{L^2}\|\rho\|_{L^4}\\ \notag
		&\leq CE^{\frac 14}_0E_1^{\frac 14}D^{\frac 14}_0D_1^{\frac 14}\|\nabla\rho\|_{L^2}+CE^{\frac 14}_0E^{\frac 14}_1D_0^{\frac 12}D_1^{\frac 12},
	\end{align*}
and
\begin{align*}
		&(H,\nabla \rho)+({\rm div}\tau,\nabla \rho)\\
		&\leq C \|\nabla\rho\|_{L^2}\Big(E^{\frac 14}_0E^{\frac 14}_1D^{\frac 14}_0D^{\frac 14}_1+E^{\frac 14}_0E^{\frac 14}_2D^{\frac 12}_1+E^{\frac 14}_0E^{\frac 14}_2\|\nabla\rho\|_{L^2}\Big)+CD_1+\frac 14\|\nabla\rho\|^2_{L^2}.
\end{align*}
Combining the above two estimates and \eqref{initial1}, we derive that
\begin{align}\label{endiss}
	\frac{d}{dt}(u,\nabla \rho)+\Big(\frac 34-C(\epsilon^{\frac 12}+\epsilon^{\frac 14})\Big)\|\nabla \rho\|^2_{L^2}\leq C(\epsilon^{\frac 12}+\epsilon^{\frac 14})(D_0+D_1)+{C}D_1+C\|\nabla u\|^2_{H^1}.
\end{align}
Then, the proof of Lemma \ref{lemma1} is finished.
\end{proof}
  In order to make $E_0(t)+E_1(t)\leq 2\epsilon$ hold at global time, the estimate of $E_1$ cannot involve $E_2$ and $D_2$. For example, using the interpolation method and the condition of \eqref{initial1}, we have $$(\nabla u\cdot\nabla\rho,\nabla\rho)\leq C\|\nabla u\|_{L^4}\|\nabla \rho\|_{L^4}\|\nabla\rho\|_{L^2}\leq C\epsilon^{\frac 14}\|\nabla^2 u\|_{L^2}^{\frac 12}\|\nabla\rho\|^{\frac 32}_{L^2}\leq C\epsilon^{\frac 14}D_1^{\frac 14}\|\nabla\rho\|^{\frac 32}_{L^2}.$$
 To make $E_1(t)+E_2(t)\leq 2M$ hold at global time, then  the estimate of $E_1(t)$ cannot involve $E_0(t)$ and $D_0(t)$. Therefore, we give two different estimates of $E_1(t)$ in following lemma.
\begin{lemm}\label{lemma2}
	Let $(\rho, u,g)$ be a local  solution for \eqref{eq1} and satisfy \eqref{initial1}. Assume that the initial data $(\rho_0,u_0,g_0)$ satisfy the conditions in Theorem \ref{global solution'}. Then, we have 
	\begin{align}\label{lemma2'}
	\frac{d}{dt}E_1(t)+D_1(t)\leq C\Big(\epsilon^{\frac 14}+\epsilon^{\frac 12}\Big)\|\nabla\rho\|^2_{L^2}.
	\end{align}
and
\begin{align}\label{lemma2''}
\frac{d}{dt}E_1(t)+D_1(t)\leq C(\epsilon^{\frac 14}+\epsilon^{\frac 12})\|\nabla^2\rho\|^2_{L^2}.
\end{align}
\end{lemm}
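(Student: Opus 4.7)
The plan is to apply the gradient $\nabla$ to the three equations in \eqref{eq1} and take the $L^2$ (or $L^2(\mathcal{L}^2)$) inner product with $(\nabla\rho, \nabla u, \nabla g)$, respectively. Using the cancellation identities \eqref{fa} with $\beta=1$, the linear couplings between $\rho$ and $u$, and between $u$ and $g$, drop out, and one obtains an identity of the form
$$\frac{1}{2}\frac{d}{dt}E_1(t)+D_1(t)=(\nabla F,\nabla\rho)+(\nabla H,\nabla u)+\langle\nabla G,\nabla g\rangle.$$
Each term on the right is a cubic or quartic nonlinearity, to be controlled by combining the interpolation inequalities of Lemma \ref{intef}, the Hardy-type inequality of Lemma \ref{Lemma1}, the stress bound of Lemma \ref{luoz}, and the a priori assumption \eqref{initial1}. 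Structurally the computation is analogous to the $\nabla$-level estimates already performed in Proposition \ref{local existence } for the iterates, only now applied directly to $(\rho,u,g)$.

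To prove \eqref{lemma2'}, I would distribute derivatives inside each nonlinearity so that the low-regularity factor is measured in a norm controlled by $E_0\leq 2\epsilon$, producing a prefactor $\epsilon^{1/4}$ or $\epsilon^{1/2}$, while the remaining factors are controlled by $E_1,E_2\leq 2M$ and by the dissipation $D_1$. Young's inequality is then used to absorb every $D_1$-contribution into the left-hand side, and quadratic remainders in $\|\nabla u\|_{L^2}$ or $\|\nabla g\|_{L^2(\mathcal{L}^2)}$ are absorbed through $D_1$ using the Poincar\'e-type bound $\|\nabla g\|_{L^2(\mathcal{L}^2)}\leq C\|\nabla\nabla_R g\|_{L^2(\mathcal{L}^2)}$ from Lemma \ref{Lemma1}. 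What is left is of the form $C(\epsilon^{1/4}+\epsilon^{1/2})\|\nabla\rho\|_{L^2}^2$.

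To prove \eqref{lemma2''}, the pairing and cancellations are identical, but every interpolation is rebalanced so that the extra derivative lands on $\rho$ rather than on $u$. Whenever \eqref{lemma2'} writes $\|\nabla\rho\|_{L^4}\lesssim\|\nabla\rho\|_{L^2}^{1/2}\|\nabla^2\rho\|_{L^2}^{1/2}$ and absorbs the $\|\nabla^2\rho\|_{L^2}^{1/2}$ factor by $M^{1/4}$, one instead keeps $\|\nabla^2\rho\|_{L^2}^{1/2}$ as the large factor and extracts the small prefactor from $\|\nabla\rho\|_{L^2}\leq(2\epsilon)^{1/2}$. Equivalently, the residual $\|\nabla\rho\|_{L^2}^2$ from \eqref{lemma2'} can be converted via the interpolation $\|\nabla\rho\|_{L^2}^2\lesssim\|\rho\|_{L^2}\|\nabla^2\rho\|_{L^2}\lesssim\epsilon^{1/2}\|\nabla^2\rho\|_{L^2}$, after which Young's inequality produces a term of the form $\epsilon^{1/4}\|\nabla^2\rho\|_{L^2}^2$ plus a piece that is absorbable by $D_1$.

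The main obstacle is the purely density-driven coupling $(h(\rho)\nabla\rho,\nabla^2 u)$, together with its analogues $(\nabla h(\rho)\cdot\mathrm{div}\,\tau,\nabla u)$ and $(\nabla h(\rho)\cdot\mathrm{div}\,\Sigma(u),\nabla u)$: these carry no intrinsic dissipation on $\rho$, so they can only be pushed onto $\|\nabla\rho\|_{L^2}^2$ (for \eqref{lemma2'}) or onto $\|\nabla^2\rho\|_{L^2}^2$ (for \eqref{lemma2''}) with a small prefactor, and this requires genuinely using the smallness of $E_0+E_1$ against the largeness of $E_1+E_2$. Choosing $\epsilon=\epsilon(M)$ sufficiently small then closes both estimates, and the two different placements of the derivative on $\rho$ are exactly what allows the lemma to be used later both in the $(E_0+E_1)$-level energy argument and in the $(E_1+E_2)$-level one.
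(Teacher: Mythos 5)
Your proposal follows essentially the same route as the paper: apply $\nabla$, pair with $(\nabla\rho,\nabla u,\nabla g)$, use the cancellations \eqref{fa} to reach $\tfrac12\tfrac{d}{dt}E_1+D_1=(\nabla F,\nabla\rho)+(\nabla H,\nabla u)+\langle\nabla G,\nabla g\rangle$, and then estimate each nonlinear term by Gagliardo--Nirenberg so that the small prefactor $\epsilon^{1/4}$ or $\epsilon^{1/2}$ is extracted from $E_0+E_1\le 2\epsilon$ while the $D_1$-contributions are absorbed by Young; for \eqref{lemma2''} the paper likewise just rebalances the interpolations in $J'_1$--$J'_3$, $J'_4$, $J'_7$ so that the uncontrolled factor becomes $\|\nabla^2\rho\|_{L^2}$ instead of $\|\nabla\rho\|_{L^2}$, exactly as you describe. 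One caveat: your ``equivalently'' shortcut for \eqref{lemma2''} does not work. The bound $\|\nabla\rho\|_{L^2}^2\lesssim\|\rho\|_{L^2}\|\nabla^2\rho\|_{L^2}\lesssim\epsilon^{1/2}\|\nabla^2\rho\|_{L^2}$ is \emph{linear} in $\|\nabla^2\rho\|_{L^2}$, so Young's inequality yields $C\epsilon^{a}\|\nabla^2\rho\|_{L^2}^2+C\epsilon^{1-a}$ with a constant remainder that is neither absorbable by $D_1$ nor present in \eqref{lemma2''}; such a constant would accumulate linearly in time and destroy the closed energy estimate of Remark \ref{re2}. So you must use your primary route (term-by-term rebalancing of the interpolations, which is what the paper does) and discard the claimed equivalence.
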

\begin{proof}
	Applying $\nabla$ to \eqref{eq1}, taking $L^2$ inner product and applying \eqref{fa}, we obtain
	\begin{align}\label{ineq11}
		\frac 1 2\frac{d}{dt}E_1(t)+D_1(t)=(\nabla F,\nabla\rho)+(\nabla H,\nabla u) +\left\langle\nabla G,\nabla g\right\rangle.
	\end{align}
	Integrating by parts, we have
	\begin{align*}
		(\nabla F,\nabla\rho)
		&=-(\nabla u\cdot \nabla\rho,\nabla\rho )+\frac 1 2({\rm div}u,|\nabla\rho|^2)-({\rm div}u\nabla\rho,\nabla\rho)-(\rho\nabla{\rm div}u,\nabla\rho)\triangleq\sum_{i=1}^{4}J'_i,\\
		(\nabla H,\nabla u)
		&=(u\cdot\nabla u,\nabla^2u)+(h(\rho) {\rm div}\Sigma{(u)},\nabla^2u)-(h(\rho)\nabla \rho,\nabla^2u)+(h(\rho) {\rm div}\tau,\nabla^2u)\triangleq\sum_{i=5}^{8}J'_i,\\
		\left\langle \nabla G,\nabla g\right\rangle
		&=	\left\langle\nabla\sigma(u)Rg,\nabla\nabla_Rg\right\rangle+\left\langle\sigma(u)R\nabla g,\nabla\nabla_Rg\right\rangle-\left\langle\nabla u\cdot\nabla g,	\nabla g\right\rangle+\frac 1 2\left\langle{\rm div}u,|\nabla g|^2\right\rangle\triangleq\sum_{i=9}^{12}J'_i.
	\end{align*}
Now we estimate the above terms. Using Lemma \ref{intef}, one has
\begin{align}\label{luoz2}
&	\|f\|_{L^{\infty}}\leq C\|f\|^{
	\frac 12}_{L^2}\|\nabla^2f\|^{
	\frac 12}_{L^2},~~\|\nabla f\|_{L^4}\leq C\|f\|^{\frac 14}_{L^2}\|\nabla^2f\|^{\frac 34}_{L^2},\\ \notag
&\|\nabla f\|_{L^2}\leq C\|f\|^{\frac 12}_{L^2}\|\nabla^2f\|^{\frac 12}_{L^2},~~\| f\|_{L^4}\leq C\|f\|^{\frac 34}_{L^2}\|\nabla^2f\|^{\frac 14}_{L^2}.
\end{align}
	Using $\eqref{luoz1}$, we infer that
	\begin{align*}
		\sum_{i=1}^3{J}'_i\leq C\|\nabla u\|_{L^4}\|\nabla\rho\|_{L^4}\|\nabla \rho\|_{L^2}\leq C(E_1E_2)^{\frac 14}D^{\frac 14}_1\|\nabla\rho\|^{\frac 32}_{L^2}.
	\end{align*}
where we use $\|\nabla u\|^{\frac 12}_{L^2}\|\nabla^2\rho\|^{\frac 12}_{L^2}\leq C(E_1E_2)^{\frac 14}$ and $\|\nabla^2u\|^2_{L^2}\leq CD_1.$ 

It follows from \eqref{luoz2} and Lemmas \ref{Lemma1}, \ref{luoz} that
\begin{align*}
		&J'_4\leq C\|\rho\|_{L^{\infty}}\|\nabla^2u\|_{L^2}\|\nabla\rho\|_{L^2}\leq C(E_0E_2)^{\frac 14}D_1^{\frac 12}\|\nabla\rho\|_{L^2},\\ \notag
		&	J'_5\leq C\|\nabla^2u\|_{L^2}\|u\|_{L^4}\|\nabla u\|_{L^4}\leq CE^{\frac 12}_0D_1,
	\end{align*}
and
\begin{align*}
		&\sum\limits_{i=6}^7J'_i\leq C(E_0E_2)^{\frac 14}D^{\frac 12}_1\Big(D^{\frac 12}_1+\|\nabla\rho\|_{L^2}\Big),~~~J'_8+ J'_{9}\leq  C(E_0E_2)^{\frac 14}D_1.
\end{align*}
Using \eqref{luoz1} and Lemma \ref{Lemma1}, we get
\begin{align*}
	\sum\limits_{i=10}^{12}J'_i\leq C\|\nabla u\|_{L^4} \|\nabla g\|_{L^4(\mathcal{L}^2)}\|\nabla\nabla_R g\|_{L^2(\mathcal{L}^2)}\leq C(E_1E_2)^{\frac 14}D_1,
\end{align*}
where we use $\|\nabla u\|^{\frac 12}_{L^2}\|\nabla^2g\|^{\frac 12}_{L^2(\mathcal{L}^2)}\leq C(E_1E_2)^{\frac 14}.$
 
 Combining estimates of $J'_1-J'_{12}$ and \eqref{initial1}, we conclude that 
 \begin{align}\label{en1}
 	\frac12\frac{d}{dt}E_1(t)+\Big(1-C(\epsilon^{\frac 14}+\epsilon^{\frac 12})\Big)D_1(t)\leq C\Big(\epsilon^{\frac 14}+\epsilon^{\frac 12}\Big)\|\nabla\rho\|^2_{L^2}.
 \end{align}
On the other hand, using $\eqref{luoz2}$, we estimate $J'_1-J'_3$, $J'_4$ and $J'_7$ as follows
\begin{align*}
	&\sum_{i=1}^3{J}'_i\leq C\|u\|^{\frac12}_{L^2}\|\rho\|^{\frac12}_{L^2}\|\nabla^2u\|^{\frac 12}_{L^2}\|\nabla^2\rho\|^{\frac 32}_{L^2}\leq C\epsilon^{\frac 12}D^{\frac 14}_1\|\nabla^2\rho\|^{\frac 32}_{L^2},\\ \notag
	&	J'_4+J'_7\leq C\|\rho\|_{L^2}\|\nabla^2u\|_{L^2}\|\nabla^2\rho\|_{L^2}\leq C\epsilon^{\frac 12}D^{\frac 12}_1\|\nabla^2\rho\|_{L^2}.
\end{align*}
Hence, we obtain
\begin{align}\label{en1'}
	\frac12\frac{d}{dt}E_1(t)+\Big(1-C(\epsilon^{\frac 14}+\epsilon^{\frac 12})\Big)D_1(t)\leq C(\epsilon^{\frac 14}+\epsilon^{\frac 12})\|\nabla^2\rho\|^2_{L^2},
\end{align}
which will be useful to prove a different closed energy estimate by combining the estimate for $E_2(t)+\eta(\nabla u,\nabla^2\rho)$.
Then, we finish the proof of Lemma \ref{lemma2}.
\end{proof}
Making use of the dissipative structure of $(\rho,u,g)$ and combining the energy of $E_0(t),E_1(t)$  and $(u,\nabla\rho),$ we get the first closed energy estimate as follows.
\begin{rema}\label{re1}
	Combining Lemmas \ref{lemma1}, \ref{lemma2}, for any $t\in [0,\infty),$ we have 
	\begin{align*}
		E_0(t)+E_1(t)+2\eta(u,\nabla\rho)+\int_0^t\Big(D_0+D_1+2\eta\|\nabla\rho\|^2_{L^2}\Big)(t')
	dt'\leq E_0(0)+E_1(0).
	\end{align*}
\end{rema}
To obtain the optimal decay rate in $H^2$, we give a more precise higher order derivatives estimate in the proof of global existence for strong solutions to \eqref{eq1}. The following lemma is the energy estimate of $E_2(t).$
\begin{lemm}\label{lemma3}
	Let $(\rho, u,g)$ be a local  solution for \eqref{eq1} with the initial data $(\rho_0,u_0,g_0)$ under the conditions in Theorem \ref{global solution'} and satisfy \eqref{initial1}. Then, we have
	\begin{align}\label{lemma3'}
	\frac{d}{dt}E_2(t)+D_2(t)\leq C\Big((E_0E_2)^{\frac 14}+(E_1E_2)^{\frac 14}+E_1^{\frac 38}E_2^{\frac 18}+E_1^{\frac 12}\Big)\Big(\|\nabla^2\rho\|^2_{L^2}+D_1 \Big),
	\end{align}
	and
	\begin{align}\label{lemma3''}	\frac{d}{dt}(\nabla u,\nabla^2 \rho)+\frac12\|\nabla^2 \rho\|^2_{L^2}\leq C(\epsilon^{\frac 14}+\epsilon^{\frac 12})(D_1+D_2)+{C}\Big(D_2+\|\nabla^2 u\|^2_{H^1}\Big).
	\end{align}
\end{lemm}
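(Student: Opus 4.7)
The plan is to mirror the strategy from Lemmas \ref{lemma1} and \ref{lemma2}, but now at the level of two spatial derivatives. The first inequality \eqref{lemma3'} is a second-order energy identity for $(\rho,u,g)$, while the second inequality \eqref{lemma3''} plays the same role for $\nabla\rho$ that the bracket $(u,\nabla\rho)$ played in Lemma \ref{lemma1}: it recovers the missing dissipative term $\|\nabla^2\rho\|^2_{L^2}$ that cannot appear in $D_2$ alone.

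For \eqref{lemma3'}, I would apply $\nabla^2$ to each of the three equations in \eqref{eq1} and take the $L^2$ (resp. $L^2(\mathcal{L}^2)$) inner product with $\nabla^2\rho$, $\nabla^2 u$, and $\nabla^2 g$. Thanks to the three cancellation identities \eqref{fa} with $\beta=2$, the linear couplings between pressure and velocity, and between $\nabla u$ and ${\rm div}\tau$, vanish exactly, leaving
\begin{align*}
\tfrac12\tfrac{d}{dt}E_2(t)+D_2(t)=(\nabla^2 F,\nabla^2\rho)+(\nabla^2 H,\nabla^2 u)+\langle\nabla^2 G,\nabla^2 g\rangle.
\end{align*}
I would then expand each $\nabla^2$ of the nonlinear terms by Leibniz, integrate by parts to symmetrize the transport-type contributions of the form $(u\cdot\nabla\nabla^2 f,\nabla^2 f)=-\tfrac12({\rm div}u,|\nabla^2 f|^2)$, and bound each summand by a Hölder product in $L^p$ with $p\in\{2,4,\infty\}$. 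The bounds are then converted into the required profile via the 2D Gagliardo--Nirenberg inequalities of Lemma \ref{intef} (for instance $\|f\|_{L^\infty}\lm\|f\|^{1/2}_{L^2}\|\nabla^2 f\|^{1/2}_{L^2}$ and $\|\nabla^2 f\|_{L^4}\lm\|\nabla f\|^{1/4}_{L^2}\|\nabla^3 f\|^{3/4}_{L^2}$) together with the Hardy-type inequalities of Lemmas \ref{Lemma1} and \ref{luoz} whenever a polymer factor appears.

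For \eqref{lemma3''}, I would differentiate the momentum equation in $x$ and test with $\nabla^2\rho$, writing
\begin{align*}
\tfrac{d}{dt}(\nabla u,\nabla^2\rho)=(\nabla u_t,\nabla^2\rho)+(\nabla u,\nabla^2\rho_t).
\end{align*}
Substituting $u_t=-\nabla\rho+{\rm div}\Sigma(u)+{\rm div}\tau+H$ into the first term yields $-\|\nabla^2\rho\|^2_{L^2}$ together with $(\nabla{\rm div}\Sigma(u),\nabla^2\rho)$, $(\nabla{\rm div}\tau,\nabla^2\rho)$ and $(\nabla H,\nabla^2\rho)$. In the second term I would use the continuity equation $\rho_t=F-{\rm div}u$, integrate by parts once to transfer a derivative off $\rho_t$, and obtain $\|\nabla{\rm div}u\|^2_{L^2}-(\nabla{\rm div}u,\nabla F)$. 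The linear pieces are bounded by $C(D_2+\|\nabla^2 u\|^2_{H^1})+\tfrac14\|\nabla^2\rho\|^2_{L^2}$ by Cauchy--Schwarz, while the nonlinear pieces $(\nabla H,\nabla^2\rho)$ and $(\nabla{\rm div}u,\nabla F)$ are treated exactly as in the estimate of $\sum J_i''$ inside Proposition \ref{local existence }, giving the $C(\epsilon^{1/4}+\epsilon^{1/2})(D_1+D_2)$ contribution under the bootstrap assumption \eqref{initial1}.

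The main obstacle is the bookkeeping in \eqref{lemma3'}: several commutator terms, notably $(\rho\,\nabla^2{\rm div}u,\nabla^2\rho)$, $(\nabla^2 u\cdot\nabla g,\nabla^2 g)$, $({\rm div}u\,\nabla^2\rho,\nabla^2\rho)$ and $(\nabla h(\rho)\,{\rm div}\tau,\nabla^3 u)$ cannot be absorbed into $D_2$ alone and must be distributed between $\|\nabla^2\rho\|^2_{L^2}$, $D_1$, and $D_2$ with the correct fractional prefactors. The profile $(E_0E_2)^{1/4}$ comes from $\|\rho\|_{L^\infty}$; $(E_1E_2)^{1/4}$ from pairings of $\|\nabla\rho\|_{L^4}$ or $\|\nabla u\|_{L^\infty}$; the unusual exponent $E_1^{3/8}E_2^{1/8}$ from the interpolation $\|\nabla^2 u\|_{L^4}\lm\|\nabla u\|^{1/4}_{L^2}\|\nabla^3 u\|^{3/4}_{L^2}$ combined with an $\mathcal{L}^2$-valued $L^4$ factor on $g$; and $E_1^{1/2}$ from $\|\nabla u\|_{L^2}$ factors that cannot be upgraded further without touching $D_2$. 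Once each term is matched to one of these four prefactors, the two estimates combine with the bound on $E_1$ in Lemma \ref{lemma2} to yield the second closed energy inequality announced in the introduction.
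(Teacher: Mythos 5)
Your proposal follows essentially the same route as the paper: the second-order energy identity obtained from $\nabla^2$ of \eqref{eq1} together with the cancellations \eqref{fa}, Leibniz expansion plus one integration by parts on $(\nabla^2H,\nabla^2u)=-(\nabla H,\nabla^3u)$, Gagliardo--Nirenberg and the Hardy-type Lemmas \ref{Lemma1}--\ref{luoz} for the individual terms, and the cross functional $(\nabla u,\nabla^2\rho)$ tested against the momentum and continuity equations to recover the $\|\nabla^2\rho\|^2_{L^2}$ dissipation. The only slight imprecision is in attributing the $E_1^{3/8}E_2^{1/8}$ prefactor: in the paper it arises mainly from $\|\nabla\rho\|_{L^4}\|\nabla^2u\|_{L^4}\|\nabla^2\rho\|_{L^2}$ (no polymer factor needed there), with the $\tau$-bearing variant appearing only in the $\nabla h(\rho)\,{\rm div}\tau$ term, but this does not affect the validity of the argument.
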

\begin{proof}
	Applying $\nabla^2$ to \eqref{eq1}, taking $L^2$ inner product and using \eqref{fa}, we obtain the following estimate
	\begin{align}\label{ineq14}
		&~~~\frac 1 2\frac{d}{dt}E_2(t)+D_2(t)= (\nabla^2 F,\nabla^2\rho)+(\nabla^2 H,\nabla^2 u)+\left\langle \nabla^2 G,\nabla^2 g\right\rangle,
	\end{align}
	where
	\begin{align}\label{ineq16'}
		(\nabla^2 F,\nabla^2 \rho)
		=&-2(\nabla u\cdot\nabla^2\rho,\nabla^2 \rho)+\frac{1}{2}({\rm div}u,|\nabla^2 \rho|^2)-(\rho\nabla^2{\rm div}u,\nabla^2 \rho)\\ \notag &-2(\nabla \rho \nabla{\rm  div}u,\nabla^2 \rho)-(\nabla^2u\cdot\nabla\rho,\nabla^2 \rho)\triangleq\sum_{i=1}^{5}J''_i.
	\end{align}
	Integrating by parts, we get $(\nabla^2 H,\nabla^2 u)=-(\nabla H,\nabla^3 u).$
	
	 Hence, we have
	\begin{align}\label{ineq16}
		(\nabla^2 H,\nabla^2 u)=&(h(\rho)\nabla {\rm div}\Sigma (u),\nabla^3u)-(h(\rho)\nabla^2\rho,\nabla^3u)+(h(\rho)\nabla{\rm div}\tau,\nabla^3u)\\ \notag 
		&+(u\cdot\nabla^2u,\nabla^3u)+(\nabla u\cdot\nabla u,\nabla^3u)-(\nabla (h(\rho))\nabla \rho,\nabla^3u)\\ \notag 
		&+(\nabla (h(\rho)) {\rm div}\Sigma (u),\nabla^3u)+(\nabla (h(\rho)){\rm div}\tau,\nabla^3u)\triangleq\sum_{i=6}^{13}J''_i,
	\end{align}
	and
	\begin{align}\label{ineq16''}
		\left\langle\nabla^2G,\nabla^2g\right\rangle=&-\left\langle\nabla^2u\cdot\nabla g,\nabla^2g\right\rangle+2\left\langle\nabla\sigma(u)R\nabla g,\nabla^2\nabla_Rg\right\rangle+\left\langle\nabla^2\sigma(u)Rg,\nabla^2\nabla_Rg\right\rangle\\ \notag 
		&-2\left\langle\nabla u\cdot\nabla^2 g,\nabla^2g\right\rangle+\frac 1 2\left\langle{\rm div}u,|\nabla^2g|^2\right\rangle+\left\langle\sigma(u)R\nabla^2g,\nabla^2\nabla_Rg\right\rangle\triangleq\sum_{i=14}^{19}J''_i.
	\end{align}
	Now, we estimate the above terms. It follows from Lemma \ref{intef} that 
	\begin{align}\label{b0}
		\|\nabla f\|_{L^{\infty}}\leq C	\|\nabla f\|^{\frac 1 2}_{L^{2}}\|\nabla^3 f\|^{\frac 1 2}_{L^{2}},~~
		\|\nabla^2f\|_{L^4}\leq C\|\nabla f\|^{\frac  1 4}_{L^2}\|\nabla^3 f\|^{\frac  3 4}_{L^2}.
	\end{align}
According to \eqref{luoz2},\eqref{b0}, we obtain
\begin{align*}
	J''_1+J''_2+J''_3
	&\leq C\|\nabla u\|_{L^{\infty}}\|\nabla^2\rho\|^2_{L^2}+C\|\rho\|_{L^{\infty}}\|\nabla^3u\|_{L^2}\|\nabla^2\rho\|_{L^2}\\ \notag
	&\leq C(E_1E_2)^{\frac 14}D^{\frac 14}_2\|\nabla^2\rho\|^{\frac 32}_{L^2}+C(E_0E_2)^{\frac 14}D^{\frac 12}_2\|\nabla^2\rho\|_{L^2},
\end{align*}
with $\|\nabla u\|^{\frac12}_{L^{2}}\|\nabla^2 \rho\|^{\frac12}_{L^{2}}\leq C(E_1E_2)^{\frac 14}$ and $\|\nabla^3u\|^{\frac 12}_{L^2}\leq CD^{\frac 14}_2.$

Using  $\eqref{luoz2}, \eqref{b0}$ again, we have
\begin{align*}
	J''_4+J''_5\leq C\|\nabla\rho\|_{L^4}\|\nabla^2u\|_{L^4}\|\nabla^2\rho\|_{L^2}\leq CE^{\frac 38}_1E^{\frac 18}_2D^{\frac 38}_2\|\nabla^2\rho\|^{\frac 54}_{L^2},
\end{align*}
and
\begin{align*}
	\sum\limits_{i=6}^9J''_i&\leq C\|\nabla^3u\|_{L^2}\Big[\|\rho\|_{L^{\infty}}\Big(\|\nabla^3u\|_{L^2}+\|\nabla^2\rho\|_{L^2}+\|\nabla^2\nabla_Rg\|_{L^2(\mathcal{L}^2)}\Big)+\|u\|_{L^{\infty}}\|\nabla^2u\|_{L^2}\Big]\\ \notag
	&\leq C(E_0E_2)^{\frac 14}D^{\frac12}_2\Big(D^{\frac12}_2+\|\nabla^2\rho\|_{L^2}\Big)+C(E_0E_2)^{\frac 14}(D_1D_2)^{\frac 12},
\end{align*}
where $\|\nabla u\|^{\frac12}_{L^2}\|\nabla \rho\|^{\frac14}_{L^2}\|\nabla^2\rho\|^{\frac 14}_{L^2}\leq CE^{\frac 38}_1E^{\frac 18}_2.$

By Lemmas \ref{intef}, \ref{luoz}, one has
\begin{align}\label{tau}
	\|\nabla\tau\|_{L^4}\leq C\|\nabla g\|^{\frac14}_{L^2(\mathcal{L}^2)}\|\nabla^2 g\|^{\frac14}_{L^2(\mathcal{L}^2)}\|\nabla\nabla_R g\|^{\frac14}_{L^2(\mathcal{L}^2)}\|\nabla^2\nabla_R g\|^{\frac14}_{L^2(\mathcal{L}^2)}.
\end{align}
This together with $\eqref{luoz2},\eqref{b0}$ and \eqref{tau} yields that
\begin{align*}
	\sum\limits_{i=10}^{13}J''_i
	&\leq C\|\nabla^3u\|_{L^2}\Big(\|\nabla u\|^2_{L^4}+\|\nabla \rho\|^2_{L^4}+\|\nabla\rho\|_{L^4}\|\nabla^2u\|_{L^4}+\|\nabla\rho\|_{L^4}\|\nabla\tau\|_{L^4}\Big)\\ \notag
	&\leq CD^{\frac 12}_2\Big[E^{\frac 12}_1(D^{\frac 12}_1+\|\nabla^2\rho\|_{L^2})+\|\nabla^2\rho\|^{\frac 12}_{L^2}\Big((E_1E_2D_2)^{\frac 14}+E_1^{\frac 38}E^{\frac18}_2D^{\frac18}_1D_2^{\frac 18}\Big)\Big],
\end{align*}
and 
\begin{align*}
	J''_{14}+	J''_{15}
	&\leq \|\nabla g\|_{L^4(\mathcal{L}^2)}\|\nabla^2u\|_{L^4}\|\nabla^2\nabla_Rg\|_{L^2(\mathcal{L}^2)}\leq C(E_1E_2)^{\frac 14}D_2.
\end{align*}
For $\sum\limits_{i=16}^{19}J''_i,$ we have the following estimate
\begin{align*}
\sum\limits_{i=16}^{19}J''_i
	&\leq C\|\nabla^2\nabla_Rg\|_{L^2(\mathcal{L}^2)}\Big(\|g\|_{L^{\infty}(\mathcal{L}^2)}\|\nabla^3u\|_{L^2}+\|\nabla u\|_{L^{\infty}}\|\nabla^2g\|_{L^2(\mathcal{L}^2)}\Big)\\ \notag 
	&\leq CD^{\frac 12}_2\Big((E_0E_2)^{\frac 14}D^{\frac 12}_2+(E_1E_2)^{\frac 14}D^{\frac 12}_2\Big).
\end{align*}
	Inserting the estimates of $\sum\limits_{i=1}^{19}J''_i$ into \eqref{ineq14}, we get
	\begin{align}\label{E2''}
		&\frac{d}{dt}E_2(t)+\Big(1-C(\epsilon^{\frac 14}+\epsilon^{\frac 38}+\epsilon^{\frac 12})\Big)D_2(t)\\ \notag
		&\leq C\Big((E_0E_2)^{\frac 14}+(E_1E_2)^{\frac 14}+E_1^{\frac 38}E_2^{\frac 18}+E_1^{\frac 12}\Big)\Big(\|\nabla^2\rho\|^2_{L^2}+D_1 \Big),
	\end{align}
	which implies that \eqref{lemma2'} holds.
	
	On the other hand, we have 
	\begin{align}\label{B7}
		\frac{d}{dt}(\nabla u,\nabla^2 \rho)+\|\nabla^2 \rho\|^2_{L^2}
		&= \|\nabla{\rm div}u\|^2_{L^2}+(\nabla u,\nabla^2F)+(\nabla H,\nabla^2\rho)\\ \notag
		&~~~+(\nabla{\rm div}\tau,\nabla^2\rho)+(\nabla{\rm div}\Sigma(u),\nabla^2\rho).
	\end{align}
Using Lemmas \ref{intef}-\ref{luoz} and integrating by parts, we obtain
	\begin{align}\label{b7}
	(\nabla^3 u,F)
	&\leq C\|\nabla^3 u\|_{L^2}\Big(\|u\|_{L^4}\|\nabla\rho\|_{L^4}+\|\rho\|_{L^4}\|\nabla u\|_{L^4}\Big)\\ \notag
	&
		\leq C\|\nabla^3 u\|_{L^2}\Big(\|u\|^{\frac 34}_{L^2}\|\rho\|^{\frac 14}_{L^2}\|\nabla^2u\|^{\frac 14}_{L^2}\|\nabla^2\rho\|^{\frac 34}_{L^2}+\|\rho\|^{\frac 34}_{L^2}\|u\|^{\frac 14}_{L^2}\|\nabla^2\rho\|^{\frac 14}_{L^2}\|\nabla^2u\|^{\frac 34}_{L^2}\Big)\\ \notag
		&
		\leq CE^{\frac 12}_0D^{\frac 12}_2\Big(D^{\frac 18}_1\|\nabla^2\rho\|^{\frac 34}_{L^2}+D^{\frac 38}_1\|\nabla^2\rho\|^{\frac 14}_{L^2}\Big),
	\end{align}
	and
	\begin{align}\label{b7'}
			(\nabla H,\nabla^2\rho)&\leq
			 C\|\nabla^2\rho\|_{L^2}\Big[\|\rho\|_{L^{\infty}}\Big(\|\nabla^3 u\|_{L^2}+\|\nabla^2 \tau\|_{L^2}+\|\nabla^2\rho\|_{L^2}\Big)+\|u\|_{L^{\infty}}\|\nabla^2 u\|_{L^2}\Big]\\ \notag
			&~~~+C\|\nabla^2\rho\|_{L^2}\Big[\|\nabla\rho\|_{L^4}\Big(\|\nabla^2u\|_{L^4}+\|\nabla\tau\|_{L^4}+\|\nabla\rho\|_{L^4}\Big)+\|\nabla u\|^2_{L^4}\Big]\\ \notag
			&\leq C(E_0E_2)^{\frac 14}\|\nabla^2\rho\|_{L^2}\Big(D^{\frac 12}_2+D^{\frac 12}_1+\|\nabla^2\rho\|_{L^2}\Big)\\ \notag
			&~~~+C\|\nabla^2\rho\|_{L^2}\Big((E_1E_2)^{\frac 14}D^{\frac 14}_2\|\nabla^2\rho\|^{\frac 12}_{L^2}+E_1^{\frac 12}D^{\frac 14}_2\|\nabla^2\rho\|^{\frac 12}_{L^2}+E_1^{\frac 12}\|\nabla^2\rho\|_{L^2}+E^{\frac 12}_1D^{\frac 12}_1\Big).
	\end{align}
H{\"o}lder's inequality and Lemma \ref{Lemma1} yields 
\begin{align}\label{b7''}
	&(\nabla{\rm div}\tau,\nabla^2\rho)+(\nabla{\rm div}\Sigma(u),\nabla^2\rho)\\ \notag
	&\leq \|\nabla^2\rho\|_{L^2}(\|\nabla^2\tau\|_{L^2}+\|\nabla^3u\|_{L^2})\leq {C}\Big(D_2+\|\nabla^3 u\|^2_{L^2}\Big)+\frac14 \|\nabla^2\rho\|^2_{L^2} .
\end{align}
This together with \eqref{b7}-\eqref{b7''} and \eqref{initial1},  we arrive at 
	\begin{align}
		&\frac{d}{dt}(\nabla u,\nabla^2 \rho)+\Big(\frac 34-C(\epsilon^{\frac14}+\epsilon^{\frac12})\Big)\|\nabla^2 \rho\|^2_{L^2}\leq C(\epsilon^{\frac 14}+\epsilon^{\frac 12})(D_1+D_2)+{C}\Big(D_2+\|\nabla^2u\|^2_{H^1}\Big).
	\end{align}
Then, we finish the proof of Lemma \ref{lemma3}.
\end{proof}
According to the energy $E_1(t),E_2(t)$ and the dissipative structures of $(\rho,u,g)$, we obtain  another closed energy estimate, which will be used to prove optimal decay rate of $\dot{H}^1$ of $(\rho,u).$
\begin{rema}\label{re2}
	Combining Lemmas \ref{lemma2}, \ref{lemma3}. Then, for any $t\in[0,\infty),$ we have
	\begin{align*}
		\mathcal{E}_1(t)+\int_0^t\mathcal{D}_1(t')dt'\leq C\mathcal{E}_1(0).
	\end{align*}
\end{rema}
We now come to the proof of global existence of solutions for \eqref{eq1}.\\
\textbf{Proof of Theorem \ref{global solution'}:}
\begin{proof}
Combining Remarks \ref{re1}, \ref{re2}, we deduce that $\eqref{initial1}$ is valid for any $t\in[0,\infty)$ and  
\begin{align*}
		\sup_{t\in [0,\infty)} \mathcal{E}_0(t)+\int_0^{\infty} \mathcal{D}_0(t)dt\leq C\mathcal{E}_0(0).
\end{align*}
	Therefore, the proof of Theorem \ref{global solution'} is finished.
\end{proof}

The rest of this paper is devoted to proving the decay rate of $\|\nabla^{s+1}(\rho,u)\|_{L^2}, \|\nabla^s g\|_{L^2(\mathcal{L}^2)}$ with $s\in [0,1],$ where optimal decay rate of $\|\nabla^2(\rho,u)\|^2_{L^2}$ is a new result.

\subsection{\textbf{Optimal decay estimates}}
\par

In this subsection, we are devoted to studying the decay rate of the derivatives of each order of $(\rho,u,g)$ in $L^2,$ which mainly depends on Schonbek's strategy.  Firstly, applying the Fourier splitting method and taking Fourier transform, one can get the initial  logarithmic decay rate with $\mathcal{E}_0
\leq C{\rm ln}^{-l}(e+t),l\in\mathbb{N}.$ Then, making use of the time weighted energy estimate and  $\mathcal{E}_0
\leq C{\rm ln}^{-l}(e+t)$, we improve the decay rate to $\mathcal{E}_0
\leq C(1+t)^{-\frac 12}.$ By using  the Fourier splitting method and the Littlewood-Paley decomposition theory, we obtain the decay rate of $\|\nabla^s(\rho,u)\|_{L^2}+\|\nabla^sg\|_{L^2(\mathcal{L}^2)}$ with $s\in [0,1].$ Since the decay rate of $\nabla^sg$ is the same as that of $\nabla^s\nabla u,$  we  improve the decay rate of  $\|g\|_{L^2(\mathcal{L}^2)}$ to $(1+t)^{-1}$. Owing to $ \|\nabla^2(\rho,u)\|^2_{L^2}+\eta (\nabla u,\nabla^2\rho)\neq \|\nabla^2(\rho,u)\|^2_{L^2},$  we fail to obtain optimal decay rate of $\|\nabla^2(\rho,u)\|_{L^2}$ by applying the same method. To obtain the optimal decay rate of $\|\nabla^2(\rho,u)\|_{L^2}\leq C(1+t)^{-\frac 32},$ we need to use the improved Fourier
splitting method and the time weighted energy estimate.  Finally, we obtain the optimal decay rate of $\|\nabla g\|_{L^2(\mathcal{L}^2)}$ of $(1+t)^{-\frac 32}$.

For the special dimension  two, we fail to get the initial polynomial decay rate by using the bootstrap argument. However, we prove logarithmic decay rate by the method in \cite{Luo-Yin}. The similar proof of the following Lemma \ref{decayln} can be found in \cite{LZN.com.2021Global}, we thus omit the details here.
\begin{lemm}\cite{LZN.com.2021Global}\label{decayln}
	Under the assumptions of Theorem \ref{opdecay'}, for any $l\in \mathbb{N},$ it holds that
	\begin{align}\label{decaylnl}
		\mathcal{E}_0(t)\leq C\ln^{-l}(e+t),~~and ~~\mathcal{E}_1(t)\leq C(1+t)^{-1}\ln^{-l}(e+t).
	\end{align}
\end{lemm}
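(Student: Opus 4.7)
The strategy is Schonbek's Fourier splitting method, iterated in a bootstrap fashion against the logarithm of time; the same philosophy was used for the incompressible FENE model in \cite{Luo-Yin} and adapted to compressible fluids in \cite{LZN.com.2021Global}. The starting point is the a priori inequality already contained in Remark \ref{re1}, which after choosing $\eta$ small reads schematically
\begin{align*}
	\frac{d}{dt}\mathcal{E}_0(t)+c\,\mathcal{D}_0(t)\leq 0,
\end{align*}
since the nonlinear contributions were absorbed in the proof of global regularity. By Plancherel's theorem and Lemma \ref{Lemma1}, the dissipation controls $|\xi|^2$ times the energy density in Fourier space, so that for any time-dependent ball $S(t)=\{\xi:|\xi|\leq r(t)\}$ one has
\begin{align*}
	\mathcal{D}_0(t)\geq c\,r(t)^2\Bigl(\mathcal{E}_0(t)-\int_{S(t)}|\widehat{U}|^2\,d\xi\Bigr),\qquad U=(\rho,u,g).
\end{align*}

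The main technical step, and the principal obstacle, is to propagate the low-frequency information $U(t)\in\dot B^{-1}_{2,\infty}$ uniformly in time. Writing the system \eqref{eq1} in Duhamel form and commuting with $\dot\Delta_j$, one must show that the nonlinear terms $F$, $H$, $G$, together with $\mathrm{div}\,\tau$, stay bounded in $\dot B^{-1}_{2,\infty}$ (respectively $\dot B^{-1}_{2,\infty}(\mathcal{L}^2)$). Here the coupling through $\tau(g)$ is the delicate point: one exploits Lemma \ref{luoz} to convert $\tau$-estimates into $\mathcal{L}^2$-dissipation plus Besov-embedding estimates, using the uniform $H^2$ bound from Theorem \ref{global solution'} together with the product law $\dot B^{-1}_{2,\infty}\cdot H^2\hookrightarrow \dot B^{-1}_{2,\infty}$. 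The outcome is a uniform bound
\begin{align*}
	\sup_{t\geq 0}\bigl(\|\rho\|_{\dot B^{-1}_{2,\infty}}+\|u\|_{\dot B^{-1}_{2,\infty}}+\|g\|_{\dot B^{-1}_{2,\infty}(\mathcal{L}^2)}\bigr)\leq C,
\end{align*}
which yields $\int_{S(t)}|\widehat{U}|^2 d\xi\leq C\,r(t)^2$.

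With this in hand, choose $r(t)^2=\tfrac{lf'(t)}{f(t)}$ with $f(t)=\ln(e+t)$; then the splitting gives
\begin{align*}
	\frac{d}{dt}\mathcal{E}_0(t)+\frac{l f'(t)}{f(t)}\mathcal{E}_0(t)\leq C\,\frac{(f'(t))^{2}}{f(t)^{2}}.
\end{align*}
Multiplying by $f(t)^l$ and integrating produces $\mathcal{E}_0(t)\leq C\ln^{-l}(e+t)$ for any prescribed $l\in\mathbb{N}$; the index $l$ can be raised arbitrarily because the right-hand side never accumulates a nontrivial factor beyond what one can absorb by iterating the argument. To pass to $\mathcal{E}_1$, use Remark \ref{re2} in the form $\frac{d}{dt}\mathcal{E}_1(t)+c\mathcal{D}_1(t)\leq 0$ and apply the same Fourier splitting but now on the first derivatives; the low-frequency ball is controlled by $r(t)^2\mathcal{E}_0(t)$, so with $r(t)^2\sim (1+t)^{-1}$ and the logarithmic bound on $\mathcal{E}_0$ one concludes $\mathcal{E}_1(t)\leq C(1+t)^{-1}\ln^{-l}(e+t)$.

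The hardest ingredient is the uniform $\dot B^{-1}_{2,\infty}$ propagation in the presence of the Fokker--Planck-type coupling and the compressible dispersive/parabolic block for $(\rho,u)$; all other steps are standard once this is in place. The remaining computations (verifying the differential inequality, carrying out the Grönwall step with $f(t)^l$, and closing the bootstrap on $\mathcal{E}_1$) are routine and proceed exactly as in \cite{LZN.com.2021Global}, which is why the excerpt defers to that reference.
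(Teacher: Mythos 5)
There is a genuine gap, and it is precisely in the step you single out as the ``main technical step.'' You propose to first establish the uniform-in-time bound $\sup_{t\ge0}\bigl(\|\rho\|_{\dot B^{-1}_{2,\infty}}+\|u\|_{\dot B^{-1}_{2,\infty}}+\|g\|_{\dot B^{-1}_{2,\infty}(\mathcal{L}^2)}\bigr)\le C$ from a Duhamel/product-law argument using only the global $H^2$ bound, and then run the Fourier splitting. For large data this cannot be closed at this stage: the product law $\dot B^{-1}_{2,\infty}\cdot H^2\hookrightarrow\dot B^{-1}_{2,\infty}$ produces in the Gr\"onwall step coefficients such as $\|u\|_{H^2}$, $\|\nabla u\|_{H^2}$, $\|\nabla\nabla_R g\|_{L^2(\mathcal{L}^2)}$ that are merely bounded or square-integrable in time, not integrable, so Gr\"onwall yields at best exponential growth of $Y(t)$. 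This is exactly why the paper's Lemma \ref{leBe} only propagates $\dot B^{-\sigma}_{2,\infty}$ \emph{under the a priori decay hypothesis} \eqref{lea} with $\sigma<2\alpha$ (so $\sigma=1$ requires $\alpha>\tfrac12$), i.e.\ the uniform negative-Besov bound is a \emph{consequence} of decay already established, obtained only in Proposition \ref{opdecay} after Lemmas \ref{decayln} and \ref{decayp}. Your ordering is circular: if the uniform $\dot B^{-1}_{2,\infty}$ bound were available a priori, the splitting with $r(t)^2\sim(1+t)^{-1}$ would give polynomial decay of $\mathcal{E}_0$ directly and the logarithmic lemma would be superfluous; the logarithmic stage exists precisely because that bound is not yet available.

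The correct route (the one in \cite{Luo-Yin} and \cite{LZN.com.2021Global}, to which the paper defers) uses the assumption $(\rho_0,u_0,g_0)\in\dot B^{-1}_{2,\infty}\times\dot B^{-1}_{2,\infty}\times\dot B^{-1}_{2,\infty}(\mathcal{L}^2)$ only on the \emph{initial data}, as in \eqref{f00}--\eqref{f001}, and estimates the Duhamel contribution of the nonlinear terms over $S(t)$ crudely via $\|\hat F\|_{L^\infty_\xi}\le\|F\|_{L^1_x}$ together with the global energy bound of Theorem \ref{global solution'}; the resulting loss (time integrals of $E_0^{3/2}$, $E_0 D_0^{1/2}$, etc., which a priori only grow) is what forces the logarithmic radius $r(t)^2\sim f'(t)/f(t)$ and the iteration in $l$ (first $\ln^{-1}$, then feed back to get $\ln^{-2}$, and so on). Your final Gr\"onwall computation with $f(t)^l$ and the passage to $\mathcal{E}_1$ via $r(t)^2\mathcal{E}_0$ are fine as arithmetic, but they rest on a low-frequency bound $\int_{S(t)}|\widehat U|^2\,d\xi\le Cr(t)^2$ that you have not legitimately obtained at this point in the argument.
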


\begin{lemm}\label{decayp}
Under the conditions in Theorem \ref{opdecay'}, then there has a constant $C$ such that
	\begin{equation}\label{p1}
		\mathcal{E}_0(t)	\leq C(1+t)^{-\frac{1}{2}},~~and~~
		\mathcal{E}_1(t)\leq  C(1+t)^{-\frac{3}{2}}.
	\end{equation}
	Moreover, it holds
	\begin{align}\label{inte}
		\int_0^t(1+t')^{\alpha_1}\mathcal{D}_1(t')dt'\leq C,
	\end{align}
	with $\alpha_1\in [0,\frac32).$
\end{lemm}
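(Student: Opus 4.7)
The plan is to upgrade the logarithmic decay of Lemma \ref{decayln} to the polynomial rate $(1+t)^{-1/2}$ (for $\mathcal{E}_0$) and $(1+t)^{-3/2}$ (for $\mathcal{E}_1$) by means of Schonbek's Fourier splitting method combined with a Duhamel representation of the low-frequency part and a time-weighted energy estimate. As a starting point I would take the differential inequality coming from Remark \ref{re1},
\begin{align*}
\tfrac{d}{dt}\mathcal{E}_0(t)+\mathcal{D}_0(t)\leq 0,
\end{align*}
and note that $\mathcal{D}_0$ controls $\|\nabla(\rho,u)\|_{H^1}^2+\|\nabla_R g\|_{H^2(\mathcal{L}^2)}^2+\tfrac{\eta}{2}\|\nabla\rho\|_{H^1}^2$; via Lemma \ref{Lemma1} the $\mathcal{L}^2$ norm of $g$ is dominated by $\|\nabla_R g\|_{\mathcal{L}^2}^2$, so after inserting a cutoff $S(t)=\{\xi:|\xi|^2\le C_1/(1+t)\}$ in the $(\rho,u)$ frequency variable and applying Plancherel, the high-frequency portion absorbs into $\mathcal{D}_0$ to give
\begin{align*}
\tfrac{d}{dt}\mathcal{E}_0+\tfrac{C_1}{1+t}\mathcal{E}_0\leq \tfrac{C_1}{1+t}\int_{S(t)}\bigl(|\hat{\rho}|^2+|\hat{u}|^2+\|\hat{g}\|_{\mathcal{L}^2}^2\bigr)d\xi.
\end{align*}

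Next I would control the low-frequency integral on the right via Duhamel. The system \eqref{eq1} can be written in Fourier space as $\partial_t \hat{U}+\mathcal{A}(\xi)\hat{U}=\hat{\mathcal{N}}$, with $U=(\rho,u,g)$ and $\mathcal{N}=(F,H,G)$ the quadratic nonlinearities. On $S(t)$ the semigroup $e^{-t\mathcal{A}(\xi)}$ is uniformly bounded in $L^2(\mathcal{L}^2)$, so
\begin{align*}
\int_{S(t)}|\hat{U}(t)|^2d\xi \lesssim \int_{S(t)}|\hat{U}_0|^2d\xi+\int_{S(t)}\Bigl|\int_0^t \hat{\mathcal{N}}(s)\,ds\Bigr|^2d\xi.
\end{align*}
The first term is bounded by $|S(t)|^{0}\cdot\|U_0\|_{\dot{B}^{-1}_{2,\infty}}^2\cdot (C_1/(1+t))\lesssim(1+t)^{-1}$ using the Bernstein-type estimate built into the $\dot{B}^{-1}_{2,\infty}$ norm. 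For the nonlinear term I would write $\mathcal{N}=\partial_x \mathcal{Q}$ in divergence form (every term of $F,H,G$ has this structure) and use $\|\hat{\mathcal{Q}}(s)\|_{L^\infty}\lesssim \|\mathcal{Q}(s)\|_{L^1}\lesssim \mathcal{E}_0(s)+\mathcal{E}_0^{1/2}(s)\mathcal{D}_0^{1/2}(s)$; here the logarithmic decay from Lemma \ref{decayln} combined with the integrability of $\mathcal{D}_0$ in time (Remarks \ref{re1} and \ref{re2}) makes the time integral small enough to beat the $(1+t)^{-1}$ weight on $S(t)$, yielding $\int_{S(t)}|\hat U|^2\lesssim (1+t)^{-1}$ after absorbing lower-order error via another Gronwall. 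Multiplying the resulting inequality by $(1+t)^{C_1}$ and integrating gives $\mathcal{E}_0(t)\leq C(1+t)^{-1/2}$.

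For $\mathcal{E}_1$ I would apply the same Fourier splitting scheme to the differential inequality from Remark \ref{re2}, but this time using the freshly obtained polynomial decay of $\mathcal{E}_0$ in place of the logarithmic bound. The key improvement is that the nonlinear Duhamel contribution can now be controlled by $(1+t)^{-1/2}\mathcal{E}_0^{1/2}$ times an $L^1_t$ factor, while the cutoff ball gives an extra $(1+t)^{-1}$ weight because one derivative is being estimated. Together with the time-weighted device $\frac{d}{dt}\bigl[(1+t)^\sigma\mathcal{E}_1\bigr]+(1+t)^\sigma\mathcal{D}_1\leq \sigma(1+t)^{\sigma-1}\mathcal{E}_1$, one recovers $\mathcal{E}_1(t)\leq C(1+t)^{-3/2}$. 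The weighted dissipation estimate \eqref{inte} for $\alpha_1\in[0,3/2)$ then follows immediately by multiplying $\frac{d}{dt}\mathcal{E}_1+\mathcal{D}_1\leq 0$ by $(1+t)^{\alpha_1}$, integrating, and using the boundary/trace bound $(1+t)^{\alpha_1}\mathcal{E}_1(t)\leq C(1+t)^{\alpha_1-3/2}$, which is summable for $\alpha_1<3/2$.

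The main obstacle will be the low-frequency $L^1$ bound on the nonlinear flux $\mathcal{Q}$, specifically the stress-tensor coupling $h(\rho)\mathrm{div}\,\tau$ and the Fokker–Planck transport $\nabla_R\!\cdot(\sigma(u)R g\psi_\infty)$: these mix polymeric and fluid variables, and to obtain the right Duhamel bound one must interpolate carefully between the logarithmic $\mathcal{E}_0$ decay and the $\mathcal{L}^2$–dissipation structure encoded by Lemma \ref{luoz}. The cross term $2\eta(\nabla^\delta u,\nabla^{\delta+1}\rho)_{H^{1-\delta}}$ inside $\mathcal{E}_\delta$ requires $\eta$ to be small so that $\mathcal{E}_\delta$ is equivalent to the standard energy — this equivalence is what permits the Plancherel argument in the first place and must be invoked at each step.
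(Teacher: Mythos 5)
Your proposal follows essentially the same route as the paper: Schonbek's Fourier splitting on $S(t)=\{|\xi|^2\lesssim (1+t)^{-1}\}$, a mode-by-mode Duhamel/energy estimate for the low frequencies using the $\dot{B}^{-1}_{2,\infty}$ assumption on the data and $L^1\to L^\infty$ Fourier bounds on the quadratic nonlinearities, a Gronwall closure of $X(t)=\sup(1+t')^{1/2}\mathcal{E}_0(t')$ fed by the logarithmic decay of Lemma \ref{decayln}, the same bootstrap to $\mathcal{E}_1$ via the extra $|\xi|^2$ on $S(t)$, and the same time-weighted integration for \eqref{inte}. The only slight imprecision is your claim that every term of $F,H,G$ is in divergence form (e.g.\ $u\cdot\nabla u$ and $h(\rho)\mathrm{div}\Sigma(u)$ are not); the paper handles those terms by Cauchy--Schwarz with $|S(t)|^{1/2}\sim(1+t)^{-1/2}$ instead, which is exactly why the first-stage rate is $(1+t)^{-1/2}$ rather than $(1+t)^{-1}$, consistent with the rate you state.
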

\begin{proof}
	We define $S(t)=\{|\xi|: |\xi|^2<C_2(1+t)^{-1}\}$ with $C_2$ large enough. Making use of the Schonbek's strategy, we split the phase space into time-dependent domain  as follows:
	\begin{align*}
		\|\nabla u\|^2_{H^2}=\int_{S(t)}(1+|\xi|^4)|\xi|^2|\hat{u}(\xi)|^2d\xi+\int_{S^c(t)}(1+|\xi|^4)|\xi|^2|\hat{u}(\xi)|^2d\xi.
	\end{align*}
	It follows from the definition of  $S(t)$ and Theorem \ref{opdecay'} that
	\begin{align}\label{e00}
		\frac{d}{dt}\mathcal{E}_0(t)+\frac{C_2\mu}{1+t}\|u\|^2_{H^2}+\frac{\eta C_2}{2(1+t)}\|\rho\|^2_{H^2}+\frac{1}{2}\|\nabla_Rg\|^2_{H^2(\mathcal{L}^2)}\leq \frac{CC_2}{1+t} \int_{S(t)}|\hat{u}(\xi)|^2+|\hat{\rho}(\xi)|^2d\xi.
	\end{align}
	Next, we focus on the  estimate of $(\rho,u)$ over $S(t).$
	Applying Fourier transform to \eqref{eq1}, we infer that
	\begin{align}\label{fl}
		\left\{
		\begin{array}{ll}
			\hat{\rho}_t+i\xi_{k} \hat{u}^k=\hat{F},  \\[1ex]
			\hat{u}^{j}_t+\mu|\xi|^2 \hat{u}^j+(\mu+\mu')\xi_{j} \xi_{k} \hat{u}^k+i\xi_{j} \hat{\rho}-i\xi_k\hat{\tau}^{jk}=\hat{H}^j,  \\[1ex]
			\hat{g}_t+\mathcal{L}\hat{g}-i\xi_k\hat{u}^jR_j\partial_{R_k}\mathcal{U}+i\xi_k\hat{u}^k=\hat{G}. \\[1ex]
		\end{array}
		\right.
	\end{align}
 According to the above inequalit                                 ies and \eqref{fl}, we obtain the following estimate
	\begin{align}\label{ineq26}
		&\frac 1 2 \frac{d}{dt}(|\hat{\rho}|^2+|\hat{u}|^2+\|\hat{g}\|^2_{\mathcal{L}^2})+\mu|\xi|^2|\hat{u}|^2+(\mu+\mu')|\xi\cdot\hat{u}|^2+\frac{1}{2}\|\nabla_R\hat{g}\|^2_{\mathcal{L}^2}\\ \notag 
		&\leq C\Big(\mathcal{R}e[\hat{F}\bar{\hat{\rho}}]+\mathcal{R}e[\hat{H}\bar{\hat{u}}]+\mathcal{R}e[\hat{G}\bar{\hat{g}}]\Big) ,
	\end{align}
	where we use $\mathcal{R}[i\xi\cdot\hat{u}\bar{\hat{\rho}}]+\mathcal{R}[i\xi\hat{\rho}\cdot\bar{\hat{u}}]=\mathcal{R}_e[i\xi\otimes\bar{\hat{u}}:\hat{\tau}]+\mathcal{R}_e[i\xi\otimes\hat{u}:\bar{\hat{\tau}}]=0.$
	
	Therefore, we deduce that
	\begin{align}\label{f0}
		\int_{S(t)}|\hat{\rho}|^2+|\hat{u}|^2+\|\hat{g}\|^2_{\mathcal{L}^2}d\xi
		&\leq C\int_{S(t)}|\hat{\rho}_0|^2+|\hat{u}_0|^2+\|\hat{g}_0\|^2_{\mathcal{L}^2}d\xi+C\int_0^t\int_{S(t)}|\hat{F}\bar{\hat{\rho}}|+|\hat{H}\bar{\hat{u}}|d\xi ds\\ \notag &~~~+C\int_0^t\int_{S(t)}\int_{B}\psi_\infty|\mathcal{F}(u\cdot\nabla g)|^2+\psi_\infty|\mathcal{F}(\nabla uRg)|^2 dRd\xi dt'.
	\end{align}
It follows from Theorem \ref{opdecay'} that
	\begin{align}\label{f00}
		\int_{S(t)}|\hat{\rho}_0|^2d\xi
		&\leq C\sum_{j\leq \log_2[\frac {4} {3}C_2^{\frac 1 2 }{\frac{1}{\sqrt{1+t}}}]}\int_{\mathbb{R}^{2}} 2\varphi^2(2^{-j}\xi)|\hat{\rho}_0|^2d\xi\\ \notag 
		&\leq C\sum_{j\leq \log_2[\frac {4} {3}C_2^{\frac 1 2 }\frac{1}{\sqrt{1+t}}]}\int_{\mathbb{R}^{2}}\|\dot{\Delta}_j \rho_0\|^2_{L^2}d\xi\\ \notag
		&\leq C(1+t)^{-1}\|\rho_0\|^2_{\dot{B}^{-1}_{2,\infty}},
	\end{align}
	and
	\begin{align}\label{f001}
	\int_{S(t)}|\hat{u}_0|^2+|\hat{g}_0|^2_{\mathcal{L}^2}d\xi&\leq C(1+t)^{-1}(\|u_0\|^2_{\dot{B}^{-1}_{2,\infty}}+\|g_0\|^2_{\dot{B}^{-1}_{2,\infty}(\mathcal{L}^2)}).
	\end{align}
Thanks to $|\mathcal{F}({\rm div}(\rho u))\bar{\hat{\rho}}|\leq C|\xi
||\mathcal{F}((\rho u))\bar{\hat{\rho}}|,$  we have
\begin{align}\label{flf}
\int_{S(t)}\int_0^t|\mathcal{F}({\rm div}(\rho u))\bar{\hat{\rho}}|dt'd\xi
&\leq C\Big(\int_{S(t)}|\xi|^2d\xi\Big)^{\frac 12}\Big(\int_0^t\|\rho\|_{L^2}\|\rho u\|_{L^1}dt'\Big)\\ \notag
&\leq C(1+t)^{-1}\int_0^t E^{\frac32}_0(t')dt'.
\end{align}
According to the above estimate, we infer that
	\begin{align}\label{f1}
		\int_0^t\int_{S(t)}|\mathcal{F}(u\cdot\nabla u)\bar{\hat{u}}|d\xi dt'&\leq C\Big(\int_0^t \|u\|^2_{L^2}\|\nabla u\|_{L^2}dt'\Big)\Big(\int_{S(t)}d\xi\Big)^{\frac 12}\\ \notag
		&\leq C{{(1+t)^{-\frac12}}}\int_0^t
	(	E_0D^{\frac 12}_0)(t')dt',
	\end{align}
and
\begin{align}\label{f003}
&\int_{S(t)}\int_0^t|\mathcal {F}(h(\rho){\rm div}\Sigma(u)
)\bar{\hat{u}}|+|\mathcal{F}(h(\rho)\nabla\rho)\bar{\hat{u}}|+|\mathcal{F}(h(\rho){\rm div}\tau)\bar{\hat{u}}|dt'd\xi\\ \notag
&\leq C(1+t)^{-\frac 12}\int_0^tE_0\Big(D^{\frac 12}_0+D^{\frac 12}_1+\|\nabla \rho\|_{L^2}\Big)(t')dt'.
\end{align}
According to Lemma \ref{Lemma1}, one has
	\begin{align}
\int_0^t\int_{S(t)}\int_{B}\psi_\infty|\mathcal{F}({u\cdot\nabla g})|^2dRd\xi dt'
&\leq C\Big(\int_{S(t)}d\xi\Big)\Big(\int_0^t\|u\|^2_{L^2}\|\nabla\nabla_Rg\|^2_{L^2(\mathcal{L}^2)}dt'\Big) \\ \notag
&\leq C (1+t)^{-1}	\int_0^t
	E_0D_1
	dt'\leq C (1+t)^{-1},\\
	\int_0^t\int_{S(t)}\int_{B}\psi_\infty|\mathcal{F}(\nabla uRg)|^2 dRd\xi dt'
	&\leq C(1+t)^{-1}.\label{f3}
	\end{align}
	 Inserting \eqref{f00}-\eqref{f3} into \eqref{f0}, one can get
	\begin{align*}
	&\frac{d}{dt}\mathcal{E}_0(t)+\frac{\mu C_2}{2(1+t)}\|u\|^2_{H^2}+\frac{ \eta C_2}{2(1+t)}\|\rho\|^2_{H^2}+\frac{1}{2}\|\nabla_Rg\|^2_{H^2(\mathcal{L}^2)}\notag\\
	&\leq \frac{CC_2}{(1+t)^{2}}+\frac{CC_2}{(1+t)^2}\int_0^tE^{\frac32}_0(t')dt'+\frac{CC_2}{(1+t)^{\frac 3 2}} \int_0^t E_0
	\Big({D}_0^{
		\frac 12}+{D}_1^{
		\frac 12}(t')+
	\|\nabla\rho\|_{L^2}\Big)(t')
	dt'.
	\end{align*}
Therefore, taking $C_2$  large enough, we have
\begin{align*}
	&\frac{d}{dt}\Big((1+t)^{\frac32}\mathcal{E}_0(t)\Big)+\frac{\mu C_2(1+t)^{\frac 12}}{2}\|u\|^2_{H^2}+\frac{ \eta C_2(1+t)^{\frac 12}}{2}\|\rho\|^2_{H^2}+\frac{ (1+t)^{\frac 32}}{2}\|\nabla_Rg\|^2_{H^2(\mathcal{L}^2)}\notag\\
	&\leq C(1+t)^{-\frac 12}\Big(1+\int_0^tE^{\frac32}_0(t')dt'\Big)+C\int_0^t\mathcal{E}_0(t')\Big({D}_0^{
		\frac 12}(t')+{D}_1^{
		\frac 12}(t')+
	\|\nabla\rho\|_{L^2}\Big)dt',
\end{align*}
that is
\begin{align}\label{uu}
	(1+t)^{\frac32}\mathcal{E}_0(t)\leq C(1+t)^{\frac 12}+C(1+t)\int_0^t\mathcal{E}_0(t')	\Big({D}_0^{
		\frac 12}(t')+{D}_1^{
		\frac 12}(t')+
	\|\nabla\rho\|_{L^2}\Big)dt'.
\end{align}
	Define that $$X(t)=\sup\limits_{0\leq t'\leq t}(1+t')^{\frac 1 2}\mathcal{E}_0(t').$$  From \eqref{uu}, we obtain
	\begin{align*}
	X(t)	\leq C +C\int_{0}^{t}X(t')(1+t')^{-\frac 1 2}\Big(\mathcal{D}_0+\|\nabla\rho\|_{L^{2}}\Big)dt'.
	\end{align*}
	By virtue of  Gronwall's inequality and Lemma \ref{decayln} yields $X(t)\leq C$, which means that $$	\mathcal{E}_0(t)\leq C(1+t)^{-\frac 1 2}.$$
	Analogously,  from the above inequality and Remark \ref{re2}, we get
	\begin{align*}
		&\frac{d}{dt}\mathcal{E}_1(t)+\frac{C_2}{2(1+t)}\Big(\mu\|\nabla u\|^2_{H^1}+\eta\|\nabla\rho\|^2_{H^1}\Big)+\frac{1}{2}\|\nabla\nabla_Rg\|^2_{H^1(\mathcal{L}^2)}\\
		&\leq \frac{CC_2}{1+t}\int_{S(t)}|\xi|^2\Big(|\hat{u}(\xi)|^2+|\hat{\rho}(\xi)|^2\Big)d\xi\leq C(1+t)^{-{\frac 5 2}}.
	\end{align*}
Then, we have
\begin{align*}
	(1+t)^{\frac 52}\mathcal{E}_1(t)\leq C(1+t),
\end{align*}
which implies that
\begin{align*}
	\mathcal{E}_1(t)\leq C(1+t)^{-\frac 32}.
\end{align*}
	Hence, we arrive at \eqref{p1}.
	
	According to Remark \ref{re2} and the above estimate, we have
	\begin{align*}
		(1+t)^{\alpha_1}\mathcal{E}_1+\int_0^t(1+t')^{\alpha_1}\mathcal{D}_1dt
		&\leq C\mathcal{E}_1(0)+C\int_0^t(1+t')^{\alpha_1-1}\mathcal{E}_1dt'\\ \notag
		&\leq C\mathcal{E}_1(0)+C\int_0^t(1+t')^{\alpha_1-\frac 52}dt'\leq C.
	\end{align*}
	Therefore, the proof of Lemma \ref{decayp} is completed.
\end{proof}
The following  energy estimates with negative Besov spaces are crucial in our paper. To simplify the presentation, 
we give the following notation:  $$Y(t)=\sup\limits_{0\leq t'\leq t} (\|(\rho,u)\|_{\dot{B}^{-\sigma}_{2,\infty}}+\|g\|_{\dot{B}^{-\sigma}_{2,\infty}(\mathcal{L}^{2})})(t').$$
\begin{lemm}\label{leBe}
	Let $0<\alpha,\sigma\leq 1$ and $\sigma<2\alpha$. Assume that $(\rho_0,u_0,g_0)$ satisfy the conditions in Theorem \ref{opdecay'}. For any $t\in [0,+\infty)$, if
	\begin{align}\label{lea}
		\mathcal{E}_0(t)\leq C(1+t)^{-\alpha},~~~~\mathcal{E}_1(t)\leq C(1+t)^{-\alpha-1},
	\end{align}
	then we have
	\begin{align}\label{be1}
		(\rho,u)\in L^{\infty}(0,\infty;\dot{B}^{-\sigma}_{2,\infty}),~g\in L^{\infty}(0,\infty;\dot{B}^{-\sigma}_{2,\infty}(\mathcal{L}^2)).
	\end{align}	
\end{lemm}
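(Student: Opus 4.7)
The plan is to run a Littlewood--Paley block energy estimate for \eqref{eq1} in the negative Besov scale $\dot{B}^{-\sigma}_{2,\infty}$, and then control the nonlinear contributions by combining the pointwise-in-time decay bounds \eqref{lea} with the weighted dissipation estimate \eqref{inte}. Applying $\dot{\Delta}_j$ to \eqref{eq1} and taking the $L^2\times L^2\times L^2(\mathcal{L}^2)$ inner product with $\dot{\Delta}_j(\rho,u,g)$, the cancellations in \eqref{fa} persist block-wise because $\dot{\Delta}_j$ commutes with $\partial_x$, $\nabla_R$ and $\int_{B}\cdot\,\psi_{\infty}dR$. The polymer drift $\frac{1}{\psi_{\infty}}\nabla_R\!\cdot\!(\sigma(u)Rg\psi_{\infty})$ is handled by integration by parts in $R$ and $|R|\le 1$; Young's inequality produces a quantity bounded by $\tfrac12\|\nabla_R\dot{\Delta}_jg\|_{L^2(\mathcal{L}^2)}^{2}+C\|\dot{\Delta}_j(\sigma(u)g)\|_{L^2(\mathcal{L}^2)}^{2}$, the first piece being absorbed into the dissipation. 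Dropping the remaining non-negative dissipation, integrating in time, multiplying by $2^{-\sigma j}$ and taking the supremum in $j$ gives
\begin{align*}
Y(t)^{2}\le Y(0)^{2}+CY(t)\!\int_{0}^{t}\!\|(F,H,u\!\cdot\!\nabla g)\|_{\dot{B}^{-\sigma}_{2,\infty}(\mathcal{L}^2)}dt'+C\!\int_{0}^{t}\!\|\sigma(u)g\|_{\dot{B}^{-\sigma}_{2,\infty}(\mathcal{L}^2)}^{2}dt',
\end{align*}
where $Y(0)$ is finite because the given $\dot{B}^{-1}_{2,\infty}$ datum and the ambient $L^2$ bound interpolate into $\dot{B}^{-\sigma}_{2,\infty}$ for every $\sigma\in(0,1]$.

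To estimate the nonlinear source I use the $d=2$ embedding $L^{p'}\hookrightarrow\dot{B}^{-\sigma}_{2,\infty}$ with $p'=2/(1+\sigma)$. A generic quadratic product $\phi\psi$ is bounded in $L^{p'}$ by H\"older with exponents $(2/\sigma,2)$ combined with the Gagliardo--Nirenberg inequality $\|\phi\|_{L^{2/\sigma}}\le C\|\phi\|_{L^2}^{\sigma}\|\nabla\phi\|_{L^2}^{1-\sigma}$. Inserting \eqref{lea} and using the uniform $H^2$ bound on $(\rho,u,g)$, each of the terms $\mathrm{div}(\rho u)$, $u\!\cdot\!\nabla u$, $h(\rho)\nabla\rho$, $h(\rho)\mathrm{div}\,\Sigma(u)$, $u\!\cdot\!\nabla g$ and $\sigma(u)g$ satisfies
\[
\|\,\cdot\,\|_{\dot{B}^{-\sigma}_{2,\infty}(\mathcal{L}^2)}\le C(1+t)^{-(\alpha+1-\sigma/2)}.
\]
Because $\sigma<2\alpha$ the exponent $\alpha+1-\sigma/2$ strictly exceeds $1$, so the corresponding time integrals (and, in the squared case, the integral of the square) are finite. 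Young's inequality then absorbs the factor $CY(t)\cdot\int$ on the right-hand side and yields $Y(t)\le C$, which is \eqref{be1}.

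The main obstacle is the stress-divergence term $h(\rho)\mathrm{div}\,\tau$ in $H$: although $\|\mathrm{div}\,\tau\|_{L^2}\le C\|\nabla\nabla_Rg\|_{L^2(\mathcal{L}^2)}$, the right-hand side enjoys no pointwise-in-time decay, only the integrated bound from \eqref{inte}. I resolve this by treating this term in an integrated rather than pointwise fashion: Cauchy--Schwarz in $t$ yields
\[
\int_{0}^{t}\|\rho\|_{L^{2/\sigma}}\|\nabla\nabla_Rg\|_{L^2(\mathcal{L}^2)}dt'\le\Bigl(\int_{0}^{t}(1+t')^{-\beta}\|\rho\|_{L^{2/\sigma}}^{2}dt'\Bigr)^{1/2}\Bigl(\int_{0}^{t}(1+t')^{\beta}\mathcal{D}_1\,dt'\Bigr)^{1/2},
\]
where $\beta$ is chosen in the interval $(\max\{0,\sigma-\alpha\},\alpha+1)$, which is non-empty precisely because $\sigma<2\alpha$ forces $\sigma-\alpha<\alpha<\alpha+1$. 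The first factor is finite: the integrand decays like $(1+t')^{-\beta-(\alpha+1-\sigma)}$ with exponent strictly greater than $1$. The second factor is finite by the weighted version of \eqref{inte} extended to the decay rate $\alpha$. This closes the estimate and completes the plan.
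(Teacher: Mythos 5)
Your proposal is correct and follows essentially the same route as the paper's proof: a blockwise $\dot{B}^{-\sigma}_{2,\infty}$ energy estimate with the cancellations of \eqref{fa} preserved under $\dot{\Delta}_j$, product bounds via $L^{2/\sigma}\times L^2$ H\"older combined with Gagliardo--Nirenberg and the embedding into $\dot{B}^{-\sigma}_{2,\infty}$, pointwise decay of exponent $1+\alpha-\sigma/2>1$ for the quadratic sources, and an integrated Cauchy--Schwarz in time against the weighted dissipation bound for the non-decaying term $h(\rho)\,{\rm div}\,\tau$ (the paper's \eqref{j7}, which corresponds to your choice $\beta=\alpha$ in your admissible interval). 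No substantive difference in method or gaps worth noting.
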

\begin{proof} 
Applying $\dot{\Delta}_j$ to \eqref{eq1}, we get
	\begin{align}\label{j0}
		\left\{
		\begin{array}{ll}
			\dot{\Delta}_j\rho_t+{\rm div}\dot{\Delta}_j u=\dot{\Delta}_j F,  \\[1ex]
			\dot{\Delta}_j u_t-{\rm div}\Sigma(\dot{\Delta}_j u)+\nabla\dot{\Delta}_j \rho-{\rm div}\dot{\Delta}_j\tau=\dot{\Delta}_j H,  \\[1ex]
			\dot{\Delta}_j g_t+\mathcal{L}\dot{\Delta}_j g+{\rm div}\dot{\Delta}_j u-\nabla\dot{\Delta}_juR\nabla_R\mathcal{U}=\dot{\Delta}_j G, \\[1ex]
		\end{array}
		\right.
	\end{align}
	Multiplying both sides of \eqref{j0} by $2^{-2j\sigma}$ and taking $l^\infty$ norm yields
	\begin{align}\label{j1}
		&\frac d {dt} Y^2(t)+\mu\|\nabla u\|^2_{\dot{B}^{-\sigma}_{2,\infty}}+(\mu+\mu')\|{\rm div} u\|^2_{\dot{B}^{-\sigma}_{2,\infty}}+\|\nabla_Rg\|^2_{\dot{B}^{-\sigma}_{2,\infty}(\mathcal{L}^{2})}\\ \notag
		&
		\leq C\Big(\|F\|_{\dot{B}^{-\sigma}_{2,\infty}}\|\rho\|_{\dot{B}^{-\sigma}_{2,\infty}}+\|H\|_{\dot{B}^{-\sigma}_{2,\infty}}\|u\|_{\dot{B}^{-\sigma}_{2,\infty}}\Big)+ C\Big(\|\nabla uRg\|^2_{\dot{B}^{-\sigma}_{2,\infty}(\mathcal{L}^{2})}+\|u\nabla g\|^2_{\dot{B}^{-\sigma}_{2,\infty}(\mathcal{L}^{2})}\Big).
	\end{align}
	By using Lemmas \ref{intef}, \ref{decayp}, we arrive at the following two inequalities
	\begin{align}\label{j2}
		&\|u\cdot\nabla u\|_{\dot{B}^{-\sigma}_{2,\infty}}+\|h(\rho){\rm div}\tau\|_{\dot{B}^{-\sigma}_{2,\infty}}\\ \notag
		&\leq C\Big(\|u\|_{L^\frac {2} {\sigma}}\|\nabla u\|_{L^2}+\|\rho\|_{L^{\frac{2}{\sigma}}}\|\nabla\tau\|_{L^2}\Big)\\ \notag
		&\leq C\Big(\|u\|^{\sigma}_{L^2}\|\nabla u\|^{2-\sigma}_{L^2}+\|\rho\|^{\sigma}_{L^{2}}\|\nabla\rho\|^{1-\sigma}_{L^2}\|\nabla\nabla_Rg\|_{L^2(\mathcal{L}^2)}\Big) \\ \notag
			&\leq C\Big((1+t)^{-(1+\alpha-\frac \sigma 2)}+(1+t)^{-\frac12(1+\alpha-\sigma)}\|\nabla\nabla_Rg\|_{L^2(\mathcal{L}^2)}\Big) ,
	\end{align}
	and 
	\begin{align}\label{j3} 
		&\|\nabla uRg\|^2_{\dot{B}^{-\sigma}_{2,\infty}(\mathcal{L}^{2})}+\|u\nabla g\|^2_{\dot{B}^{-\sigma}_{2,\infty}(\mathcal{L}^{2})}\\ \notag
		&\leq C\Big(\|\nabla u\|^2_{L^2}\|g\|^2_{L^{\frac {2} {\sigma}}(\mathcal{L}^{2})}+\|\nabla g\|^2_{L^2(\mathcal{L}^{2})}\|u\|^2_{L^{\frac {2} {\sigma}}}\Big)\leq C(1+t)^{-(2+2\alpha-\sigma)}.
	\end{align}
From \eqref{j2},\eqref{j3}, we have
\begin{align}\label{j5}
	&\|F\|_{\dot{B}^{-\sigma}_{2,\infty}}\leq C(1+t)^{-(1+\alpha-\frac \sigma 2)},\\\label{j6}
	&\|H\|_{\dot{B}^{-\sigma}_{2,\infty}}\leq C\Big((1+t)^{-(1+\alpha-\frac \sigma 2)}+(1+t)^{-\frac12(1+\alpha-\sigma)}\|\nabla\nabla_Rg\|_{L^2(\mathcal{L}^2)}\Big).
\end{align}
	Plugging the estimates of \eqref{j3}-\eqref{j6} into \eqref{j1}, one has
	\begin{align*}
		\frac d {dt} Y^2(t)&\leq C\Big((1+t)^{-(1+\alpha-\frac \sigma 2)}+(1+t)^{-(2+2\alpha- \sigma )}\Big)Y(t)\\ \notag
		&~~~+C(1+t)^{-\frac12(1+\alpha-\sigma)}\|\nabla\nabla_Rg\|_{L^2(\mathcal{L}^2)}+C(1+t)^{-(2+2\alpha-\sigma)}.
	\end{align*}  
	By using \eqref{inte}, Lemma \ref{luoz} and the above estimate, we have
	\begin{align}\label{j7}
		&\int_0^T(1+t')^{-\frac12(1+\alpha-\sigma)}\|\nabla\nabla_Rg\|_{L^2(\mathcal{L}^2)}dt'\\ \notag
		&\leq C\Big(\int_0^t(1+t')^{-(1+2\alpha-\sigma)}dt'\Big)^{\frac12}\Big(\int_0^t(1+t')^{\alpha}\|\nabla\nabla_Rg\|^2_{L^2(\mathcal{L}^2)}dt'\Big)^{\frac12}\leq C.
	\end{align}
	 Integrating the above inequality with respect to $t'$ over $[0,t]$ and using \eqref{j7} yields that
	$$Y^2(t)\leq CY^2(0)+CY(t)+C,$$
	from which one can deduce that $Y(t)\leq C$.
	
	Then, we complete the proof of Lemma \ref{leBe}.
\end{proof}
The following lemma enables us obtain optimal decay rates. Moreover, after obtaining optimal decay rate of $(\rho,u)$ in $H^1$, we can use it to obtain optimal decay rate of $g$ in $L^2(\mathcal{L}^2)$ and the time weighted integrability result of $\mathcal{D}_1(t)$ with
$$\int_0^t(1+t')^{\alpha_2}\mathcal{D}_1(t')dt'\leq C,$$ which plays an important role in proving decay rate for the highest derivative of the solutions to \eqref{eq1}.
\begin{prop}\label{opdecay}
	Let  $(\rho_0,u_0,g_0)$ satisfy the conditions in Theorem \ref{opdecay'}. For any $s\in[0,1]$, there holds
	\begin{align*}
		&\|\nabla^s\rho\|_{L^2}+\|\nabla^su\|_{L^2}\leq C(1+t)^{-\frac {1+s} {2}},~~~\|\nabla^sg\|_{L^2(\mathcal{L}^2)}\leq C(1+t)^{-1}.
	\end{align*}
	Moreover, for any $\alpha_2\in[0,2)$, we have
	\begin{align}\label{integ}
		\int_0^t(1+t')^{\alpha_2}\mathcal{D}_1(t')dt'\leq C.
	\end{align}
\end{prop}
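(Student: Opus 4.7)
The plan is to bootstrap from the polynomial decay in Lemma \ref{decayp} to the optimal rates by alternating between the negative-Besov boundedness of Lemma \ref{leBe} and the Schonbek-type Fourier splitting scheme that produced Lemma \ref{decayp}. First, I would apply Lemma \ref{leBe} with $\alpha=1/2$ (supplied by Lemma \ref{decayp}) and any $\sigma\in(0,1)$ to obtain the uniform bound
\begin{align*}
\sup_{t\ge 0}\bigl(\|(\rho,u)(t)\|_{\dot{B}^{-\sigma}_{2,\infty}}+\|g(t)\|_{\dot{B}^{-\sigma}_{2,\infty}(\mathcal{L}^2)}\bigr)\le C.
\end{align*}
Inserting this into the low-frequency estimate from Lemma \ref{decayp} via
\begin{align*}
\int_{S(t)}\bigl(|\hat\rho|^2+|\hat u|^2+\|\hat g\|^2_{\mathcal{L}^2}\bigr)\,d\xi\le Cr(t)^{2\sigma}Y^2(t)\le C(1+t)^{-\sigma},
\end{align*}
the Fourier-splitting ODE upgrades the decay of $\mathcal{E}_0$ to $C(1+t)^{-\sigma}$ for any $\sigma<1$, and the time-weighted variant upgrades $\mathcal{E}_1$ correspondingly.

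Next, with the improved exponent $\alpha$ arbitrarily close to $1$, Lemma \ref{leBe} now applies at the critical endpoint $\sigma=1$ since $\sigma<2\alpha$ holds. This yields $(\rho,u,g)\in L^\infty(0,\infty;\dot{B}^{-1}_{2,\infty})$. Running the Schonbek scheme once more with this stronger low-frequency control gives the optimal $\mathcal{E}_0(t)\le C(1+t)^{-1}$, and the analogous argument on the $\nabla$-level (via Remark \ref{re2}) delivers $\mathcal{E}_1(t)\le C(1+t)^{-2}$. Intermediate $s\in(0,1)$ follows from the Gagliardo-Nirenberg interpolation $\|\nabla^s f\|_{L^2}\le \|f\|_{L^2}^{1-s}\|\nabla f\|_{L^2}^s$.

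To obtain the sharper bound $\|g\|_{L^2(\mathcal{L}^2)}\le C(1+t)^{-1}$, I would exploit the spectral gap of $\mathcal{L}$ by testing the $g$-equation against $g\psi_\infty$ and using Lemma \ref{Lemma1} to get
\begin{align*}
\frac{d}{dt}\|g\|^2_{L^2(\mathcal{L}^2)}+c\|g\|^2_{L^2(\mathcal{L}^2)}\le C\|\nabla u\|^2_{L^2}+\mathcal{R}(t),
\end{align*}
where $\mathcal{R}(t)$ collects nonlinear remainders absorbed by the a priori bounds. Since $\|\nabla u\|^2_{L^2}\le C(1+t)^{-2}$ from the previous step, Gronwall's inequality produces the stated decay, and interpolation with $\|\nabla g\|_{L^2(\mathcal{L}^2)}\le C(1+t)^{-1}$ covers $s\in(0,1]$. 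The time-weighted bound \eqref{integ} follows by multiplying Remark \ref{re2}'s inequality by $(1+t)^{\alpha_2}$, integrating by parts, and observing that $\int_0^t(1+t')^{\alpha_2-1}\mathcal{E}_1(t')\,dt'$ is finite precisely when $\alpha_2<2$ thanks to $\mathcal{E}_1\le C(1+t)^{-2}$.

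The main obstacle is closing the first bootstrap: the constraint $\sigma<2\alpha$ only permits $\sigma<1$ at the initial $\alpha=1/2$, so one must iterate through intermediate $\sigma$ values strictly less than $1$ before reaching the critical endpoint $\sigma=1$. A secondary technical difficulty is controlling the polymer stress contributions $\|h(\rho)\,{\rm div}\,\tau\|_{\dot{B}^{-\sigma}_{2,\infty}}$ and $\|\nabla u\,R\,g\|_{\dot{B}^{-\sigma}_{2,\infty}(\mathcal{L}^2)}$ at low frequencies, so that the Besov estimate supplied by Lemma \ref{leBe} stays uniform in $t$ throughout the iteration.
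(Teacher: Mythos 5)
Your proposal follows essentially the same route as the paper: bootstrap from Lemma \ref{decayp} ($\alpha=\tfrac12$) through Lemma \ref{leBe} at an intermediate $\sigma<1$ (the paper uses exactly one step, $\sigma=\tfrac34$, before reaching $\sigma=1$), rerun the Schonbek splitting with the improved low-frequency Besov bound to get $\mathcal{E}_0\lesssim(1+t)^{-1}$ and $\mathcal{E}_1\lesssim(1+t)^{-2}$, then use the spectral gap of $\mathcal{L}$ plus Gronwall for the extra decay of $g$, and Remark \ref{re2} weighted by $(1+t)^{\alpha_2}$ for \eqref{integ}. The argument is correct and matches the paper's proof in all essential respects.
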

\begin{proof}
	The combination of Lemmas \ref{decayp}, \ref{leBe}  with $\sigma=\frac34$ and $\alpha=\frac 1 2$ implies that
	\begin{align}\label{b1}
		(\rho,u,g)\in L^{\infty}(0,\infty;\dot{B}^{-\frac 34}_{2,\infty})\times L^{\infty}(0,\infty;\dot{B}^{-\frac 34}_{2,\infty})\times L^{\infty}(0,\infty;\dot{B}^{-\frac 34}_{2,\infty}(\mathcal{L}^2)).
	\end{align}
Recall that
		\begin{align}\label{e00'}
		\frac{d}{dt}\mathcal{E}_0(t)+\frac{\mu C_2}{1+t}\|u\|^2_{H^2}+\frac{\eta C_2}{2(1+t)}\|\rho\|^2_{H^2}+\frac{1}{2}\|\nabla_Rg\|^2_{H^2(\mathcal{L}^2)}\leq \frac{CC_2}{1+t} \int_{S(t)}|\hat{u}(\xi)|^2+|\hat{\rho}(\xi)|^2d\xi.
	\end{align}
It follows from \eqref{b1} that
\begin{align}\label{f00'}
	\int_{S(t)}\Big(|\hat{u}|^2+|\hat{\rho}|^2\Big)
	d\xi
	&\leq C\sum_{j\leq \log_2[\frac {4} {3}C_2^{\frac 1 2 }{\frac{1}{\sqrt{1+t}}}]}\int_{\mathbb{R}^{2}} 2\varphi^2(2^{-j}\xi)\Big(|\hat{u}|^2+|\hat{\rho}|^2
	\Big)d\xi\\ \notag 
	&\leq C\sum_{j\leq \log_2[\frac {4} {3}C_2^{\frac 1 2 }\frac{1}{\sqrt{1+t}}]}\int_{\mathbb{R}^{2}}\|\dot{\Delta}_j u\|^2_{L^2}+\|\dot{\Delta}_j \rho\|^2_{L^2}
	d\xi\\ \notag
	&
	\leq C(1+t)^{-\frac34}\Big(\|u\|^2_{\dot{B}^{-\frac34}_{2,\infty}}+\|\rho\|^2_{\dot{B}^{-\frac34}_{2,\infty}}\Big).
\end{align}
Plugging the above estimate into \eqref{e00'}, one can deduce that
\begin{align}
		\frac{d}{dt}\mathcal{E}_0(t)+\frac{\mu C_2}{1+t}\|u\|^2_{H^2}+\frac{\eta C_2}{1+t}\|\rho\|^2_{H^1}+\frac{1}{2}\|\nabla_Rg\|^2_{H^2(\mathcal{L}^2)}\leq C(1+t)^{-\frac 74}.
\end{align}
According to Lemma \ref{decayp}, we conclude that
	\begin{align*}
		\mathcal{E}_0(t)\leq C(1+t)^{-\frac 34}.
	\end{align*}
On the other hand, by using \eqref{f00'}, we have the following estimate
	\begin{align}\label{e000}
	&\frac d {dt} \mathcal{E}_1(t)+\frac { C_2} {1+t}\Big(\mu\|\nabla u\|^2_{H^{1}}+\frac{\eta}{2}\|\nabla \rho\|^2_{H^1}\Big)
	+\frac{1}{2}\|\nabla \nabla_R g\|^2_{H^{1}(\mathcal{L}^{2})}  \\ \notag
	&\leq \frac {CC_2} {1+t}\int_{S(t)}|\xi|^2\Big(|\hat{u}(\xi)|^2+|\hat{\rho}(\xi)|^2\Big) d\xi\leq C(1+t)^{-2-\frac34},
\end{align}
from which one can deduce that
\begin{align*}
	\mathcal{E}_1(t)\leq C(1+t)^{-\frac 34-1}.
\end{align*}
	Lemma \ref{leBe} with $\sigma=1$ and $\alpha=\frac 34$  ensures that
	\begin{align*}
		(\rho,u,g)\in L^{\infty}(0,\infty;\dot{B}^{-1}_{2,\infty})\times L^{\infty}(0,\infty;\dot{B}^{-1}_{2,\infty})\times L^{\infty}(0,\infty;\dot{B}^{-1}_{2,\infty}(\mathcal{L}^2)).
	\end{align*}
Similar to \eqref{e00'} and \eqref{e000}, we have
\begin{align}\label{decaypug}
	\mathcal{E}_0(t)\leq C(1+t)^{-1},~~~~\mathcal{E}_1(t)\leq C(1+t)^{-2}.
\end{align}
On the other hand, Theorem \ref{global solution'} and Lemma \ref{Lemma1} yields
\begin{align*}
	\frac{d}{dt}\|g\|^2_{L^2(\mathcal{L}^2)}+c\|g\|^2_{L^2(\mathcal{L}^2)}\leq C\|\nabla u\|^2_{L^2},
\end{align*}
from which one can deduce that
\begin{align}\label{g1}
	\frac{d}{dt}\Big(e^{ct}\|g\|^2_{L^2(\mathcal{L}^2)}\Big)\leq Ce^{ct}\|\nabla u\|^2_{L^2}.
\end{align}
It follows from \eqref{decaypug} and \eqref{g1} that
\begin{align}
	\|g\|^2_{L^2(\mathcal{L}^2)}&\leq Ce^{-ct}\|g_0\|^2_{L^2(\mathcal{L}^2)}+\int_0^te^{-c(t-t')}\|\nabla u\|^2_{L^2}dt'\\ \notag
	&\leq Ce^{-ct}\|g_0\|^2_{L^2(\mathcal{L}^2)}+\int_0^te^{-c(t-t')}(1+t')^{-2}dt'\\ \notag
	&\leq C(1+t)^{-2}.
\end{align}
	According to Remark \ref{re2} and \eqref{decaypug}, for any $\alpha_2\in [0,2),$ we obtain
	\begin{align*}
		(1+t)^{\alpha_2}\mathcal{E}_1(t)+\int_0^t(1+t')^{\alpha_2}\mathcal{D}_1(t')dt'
		&\leq C\mathcal{E}_1(0)+C\int_0^t(1+t)^{\alpha_2-1}\mathcal{E}_1(t')dt'\\ \notag
		&\leq C\mathcal{E}_1(0)+\int_0^t(1+t')^{\alpha_2-3}\leq C\mathcal{E}_1(0)+C,
	\end{align*}
which plays an important role in proving the highest order decay rate.

Therefore, we complete the proof of Proposition \ref{opdecay}.
\end{proof}
Now we are devoting to studying   the optimal decay rate  for the highest derivative of $(\rho,u)$ in $L^2$ and the optimal decay rate of $\nabla g$ in $L^2(\mathcal{L}^2)$. Since the energy estimate  is unclosed,  we cannot obtain optimal decay rate for the highest derivative by using the same method in $H^1.$ To overcome this difficulty, we adopt a new method which flexibly combines the Fourier splitting
method and the time weighted energy estimate.  Therefore, we  obtain optimal decay rate of $\|\nabla^2(\rho,u)\|_{L^2}$. Finally, using the optimal rate of highest derivative for $(\rho,u)$ in $L^2$, we conclude that the optimal decay rate of $\nabla g$ in $L^2(\mathcal{L}^2)$.
\begin{prop}\label{highdecay}
	Under assumptions of Theorem \ref{opdecay'}, then  it holds 
	\begin{align}\label{h1}
		\|\nabla^2\rho\|_{L^2}+\|\nabla^2u\|_{L^2}\leq C(1+t)^{-\frac 32},~~~~~\|\nabla g\|_{L^2(\mathcal{L}^2)}\leq C(1+t)^{-{\frac 32}}.
	\end{align}
\end{prop}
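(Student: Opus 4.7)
The plan is to close a time-weighted Fourier-splitting estimate on the corrected energy $\bar E_2(t):=E_2(t)+\eta(\nabla u,\nabla^2\rho)$, which by Lemma \ref{lemma3} (summing \eqref{lemma3'} with $\eta$ times \eqref{lemma3''}, choosing $\eta$ small enough and using \eqref{initial1}) satisfies
\begin{align*}
\frac{d}{dt}\bar E_2(t)+\tfrac12 D_2(t)+\tfrac{\eta}{4}\|\nabla^2\rho\|_{L^2}^2\le C\mathcal N(t),
\end{align*}
where the nonlinear remainder $\mathcal N(t)$ is made of terms whose coefficients $(E_0E_2)^{1/4},(E_1E_2)^{1/4},E_1^{3/8}E_2^{1/8},E_1^{1/2}$ are all decay factors by Proposition \ref{opdecay} and the global bound of Theorem \ref{global solution'}. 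Note that $\bar E_2\not\simeq E_2$, but the cross term is harmless at the target rate since $|\eta(\nabla u,\nabla^2\rho)|\le C\mathcal E_1(t)\le C(1+t)^{-2}$ is strictly lower order than $(1+t)^{-3}$.

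To turn dissipation into coercivity on $\bar E_2$, I run Fourier splitting with $S(t):=\{\xi:|\xi|^2<C_2/(1+t)\}$ for a large constant $C_2$. On $S^c(t)$ the inequality $|\xi|^4\le\tfrac{1+t}{C_2}|\xi|^6$ gives
\begin{align*}
\|\nabla^2 u\|_{L^2}^2\le\int_{S(t)}|\xi|^4|\hat u|^2\,d\xi+\tfrac{1+t}{C_2}\|\nabla^3 u\|_{L^2}^2,
\end{align*}
so $\tfrac{cC_2}{1+t}\|\nabla^2 u\|_{L^2}^2$ is absorbed by a low-frequency piece plus $\tfrac{\mu}{2}\|\nabla^3 u\|_{L^2}^2\subset\tfrac12 D_2$. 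For $g$, Lemma \ref{Lemma1} applied to $\nabla^2 g$ yields $\|\nabla^2 g\|_{L^2(\mathcal L^2)}^2\le C\|\nabla^2\nabla_R g\|_{L^2(\mathcal L^2)}^2\subset D_2$ directly; for $\rho$, the term $\tfrac{\eta}{4}\|\nabla^2\rho\|_{L^2}^2$ is already on the left. The low-frequency piece is estimated via $(\rho,u)\in L^\infty_t(\dot B^{-1}_{2,\infty})$, which follows from Lemma \ref{leBe} with $\sigma=1$, $\alpha=3/4$ after invoking the decay of Proposition \ref{opdecay}, giving
\begin{align*}
\int_{S(t)}|\xi|^4\bigl(|\hat\rho|^2+|\hat u|^2\bigr)d\xi\le C(1+t)^{-3}.
\end{align*}
Assembling,
\begin{align*}
\frac{d}{dt}\bar E_2(t)+\frac{cC_2}{1+t}\bar E_2(t)\le C(1+t)^{-4}+C\mathcal N(t),
\end{align*}
and multiplying by the integrating factor $(1+t)^{cC_2}$ with $cC_2>3$, integrating in $t$, and using pointwise decay from Proposition \ref{opdecay} together with the time-weighted bound $\int_0^t(1+t')^{\alpha_2}\mathcal D_1(t')dt'\le C$ (valid for $\alpha_2\in[0,2)$ by \eqref{integ}) to control the contribution of $\mathcal N(t)$, I obtain $\bar E_2(t)\le C(1+t)^{-3}$. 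The lower-order correction then transfers this to $\|\nabla^2(\rho,u)\|_{L^2}\le C(1+t)^{-3/2}$.

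For $\nabla g$ I apply $\nabla$ to the third equation of \eqref{eq1}, take the $L^2(\mathcal L^2)$ inner product with $\nabla g$, integrate by parts in $R$, and use Lemma \ref{Lemma1} on $\nabla g$ (whose integral against $\psi_\infty$ still vanishes) to produce the coercivity $c\|\nabla g\|_{L^2(\mathcal L^2)}^2\le\|\nabla\nabla_R g\|_{L^2(\mathcal L^2)}^2$, giving
\begin{align*}
\frac{d}{dt}\|\nabla g\|_{L^2(\mathcal L^2)}^2+c\|\nabla g\|_{L^2(\mathcal L^2)}^2\le C\|\nabla^2 u\|_{L^2}^2+C\mathcal R(t),
\end{align*}
where $\mathcal R(t)$ collects the transport and stretching remainders, controlled by Proposition \ref{opdecay} as $\mathcal R(t)\le C(1+t)^{-3}$. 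Inserting the just-proved $\|\nabla^2 u\|_{L^2}^2\le C(1+t)^{-3}$ and a Duhamel/Gr\"onwall argument identical to the one closing the $\|g\|_{L^2(\mathcal L^2)}$ bound in Proposition \ref{opdecay} yield $\|\nabla g\|_{L^2(\mathcal L^2)}\le C(1+t)^{-3/2}$.

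The hard part is the first step: at the $\dot H^2$ level the dissipation $D_2$ contains no $\rho$-derivative, so the only source of $\|\nabla^2\rho\|_{L^2}^2$-coercivity is the $\eta(\nabla u,\nabla^2\rho)$ cross term, whose inclusion destroys the equivalence $\bar E_2\simeq E_2$ and rules out a direct Gr\"onwall argument on $E_2$. One must calibrate $\eta,C_2,\epsilon$ jointly---$\eta$ small so the cross term's nonlinear cost absorbs, $C_2$ large so $cC_2>3$ and the Fourier-splitting loss $\tfrac{1+t}{C_2}\|\nabla^3 u\|_{L^2}^2$ fits inside $\tfrac12 D_2$, and $\epsilon$ small so every $\epsilon^{1/4}+\epsilon^{1/2}$ coefficient inherited from Lemma \ref{lemma3} absorbs---and rely essentially on the time-weighted integrability \eqref{integ} rather than on any pointwise bound $\mathcal N(t)\le C(1+t)^{-4}$.
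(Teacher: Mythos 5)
Your plan founders at exactly the obstruction the paper flags: the recovery of $E_2$ from the corrected functional. You work with $\bar E_2=E_2+\eta(\nabla u,\nabla^2\rho)$ for a \emph{constant} small $\eta$, and even granting that you can prove $\bar E_2(t)\le C(1+t)^{-3}$, you cannot transfer this to $E_2$. Indeed
\begin{align*}
E_2=\bar E_2-\eta(\nabla u,\nabla^2\rho)\le C(1+t)^{-3}+\eta\|\nabla u\|_{L^2}\|\nabla^2\rho\|_{L^2}\le C(1+t)^{-3}+C\eta(1+t)^{-1}E_2^{\frac12},
\end{align*}
and Young's inequality only yields $E_2\le C(1+t)^{-2}$, i.e.\ $\|\nabla^2(\rho,u)\|_{L^2}\le C(1+t)^{-1}$ --- no better than what Proposition \ref{opdecay} already provides. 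Your own estimate $|\eta(\nabla u,\nabla^2\rho)|\le C\mathcal E_1\le C(1+t)^{-2}$ makes the point: $(1+t)^{-2}$ decays \emph{more slowly} than the target $(1+t)^{-3}$, so calling it ``strictly lower order'' and ``harmless'' is backwards; it is precisely the dominant error. A bootstrap does not help either: if $\|\nabla^2\rho\|_{L^2}\le C(1+t)^{-a}$, the cross term gives $E_2\le C(1+t)^{-\min(3,1+a)}$, and the iteration $a\mapsto\min(3,1+a)/2$ has the fixed point $a=1$. The paper's proof escapes this trap by an asymmetric weighting: it takes $\mathcal E_2(t)=\kappa(1+t)E_2(t)+(\nabla u,\nabla^2\rho)$, with the growing weight $\kappa(1+t)$ on $E_2$ only. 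Then $\mathcal E_2\le C(1+t)^{-2}$ suffices, since $\kappa(1+t)E_2\le C(1+t)^{-2}+C(1+t)^{-1}E_2^{1/2}$ and Young's inequality against the weight $\kappa(1+t)$ now returns $E_2\le C(1+t)^{-3}$. This extra factor of $(1+t)$ is the whole point of the ``improved Fourier splitting / time-weighted energy'' combination and is absent from your functional.

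A secondary problem is the closing of your differential inequality itself. The remainders inherited from \eqref{lemma3'} contain terms of the form $(\text{decaying coefficient})\times D_1$, and your integrating factor is $(1+t)^{cC_2}$ with $cC_2>3$; but the only available weighted dissipation bound is \eqref{integ}, $\int_0^t(1+t')^{\alpha_2}\mathcal D_1\,dt'\le C$ for $\alpha_2<2$, so $\int_0^t(1+t')^{cC_2}\mathcal N(t')\,dt'$ cannot be controlled as you claim once $cC_2>3$ (the coefficients decay like a small negative power of $(1+t)$, not like $(1+t)^{-(cC_2-2)}$). In the paper this issue does not arise because the final weight applied is only $(1+t)^3$ acting on $\mathcal E_2\sim\kappa(1+t)E_2$, and the lone $D_1$-type leftover, $C\int_0^t(1+t')\|\nabla u\|^2_{L^2}\,dt'$, sits safely inside the range $\alpha_2<2$. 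Your treatment of the $\nabla g$ decay is fine and matches the paper's, but it is conditional on the $\dot H^2$ bound that the first part fails to deliver.
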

\begin{proof}
For convenience, we firstly give the following notations:
\begin{align*}
	&\mathcal{E}_{2}(t)=\kappa(1+t)\Big(\|\nabla^2\rho\|^2_{L^2}+\|\nabla^2u\|^2_{L^2}+\|\nabla^2g\|^2_{L^2(\mathcal{L}^2)}\Big)+(\nabla u,\nabla^2\rho),\\ \notag 
	&\mathcal{D}_{2}(t)=\kappa(1+t)\Big(\mu\|\nabla^3u\|^2_{L^2}+(\mu+\mu')\|\nabla^2{\rm div}u\|^2_{L^2}+\|\nabla^2\nabla_{R}g\|^2_{L^2(\mathcal{L}^2)}\Big)+\frac1 4 \|\nabla^2\rho\|^2_{L^2},
\end{align*}
with $\kappa>0$ is sufficiently small.

We firstly deduce that
\begin{align}\label{34in1}
	\frac 1 2 \frac{d}{dt} \mathcal{E}_2(t)
	&=\kappa\Big(\|\nabla^2\rho\|^2_{L^2}+\|\nabla^2u\|^2_{L^2}+\|\nabla^2\nabla_Rg\|^2_{L^2(\mathcal{L}^2)}\Big) \\\notag
	&~~~+\kappa(1+t)\frac{d}{dt} {E}_2(t)+\frac{d}{dt}(\nabla u,\nabla^2\rho).
\end{align}
Making use of  \eqref{ineq14}-\eqref{ineq16''}, \eqref{decaypug} and Proposition  \ref{opdecay}, one can deduce that
\begin{align}\label{h0}
	\kappa(1+t)(J''_1+J''_2)
	&
	\leq C\kappa(1+t)\|\nabla u\|^{\frac 12}_{L^2}\|\nabla^2\rho\|^{\frac 12}_{L^2}\|\nabla^3 u\|^{\frac 12}_{L^2}\|\nabla^2\rho\|^{\frac 32}_{L^2}\\\notag
	&\leq C\kappa \|\nabla^3 u\|^{\frac 12}_{L^2}\|\nabla^2\rho\|^{\frac 32}_{L^2}\leq C{\kappa^4(1+t)}\|\nabla^3 u\|^{2}_{L^2}+\frac{1}{100}\|\nabla^2\rho\|^{2}_{L^2},
\end{align}
and
\begin{align}\label{h0'}
		\kappa(1+t)J''_3&\leq C\kappa(1+t)\|\rho\|^{\frac 12}_{L^2}\|\nabla^2\rho\|^{\frac 12}_{L^2}\|\nabla^3u\|_{L^2}\|\nabla^2\rho\|_{L^2}\\\notag
		&\leq C\kappa (1+t)^{\frac 14}\|\nabla^2\rho\|_{L^2}\|\nabla^3u\|_{L^2}\leq C\kappa^2(1+t)\|\nabla^3u\|^2_{L^2}+\frac{1}{100}\|\nabla^2\rho\|^{2}_{L^2},
\end{align} 
where we have used that $$\|\nabla u\|^{\frac 12}_{L^2}\|\nabla^2\rho\|^{\frac 12}_{L^2}\leq C\mathcal{E}^{\frac 12}_1\leq C(1+t)^{-1},~\|\rho\|^{\frac 12}_{L^2}\|\nabla^2\rho\|^{\frac 12}_{L^2}\leq C\mathcal{E}^{\frac 14}_0\mathcal{E}^{\frac 14}_1\leq C(1+t)^{-\frac 34}.$$

In order to obtain optimal decay rate of solutions in $\dot{H}^2$ with large data, we now give another the estimates of $J''_4, J''_5$. Using Lemma \ref{intef} with $\|\nabla^2u\|_{L^4}\leq C\|\nabla u\|^{\frac14}_{L^2}\|\nabla^3 u\|^{\frac34}_{L^2}$ and Proposition \ref{opdecay}, we have
\begin{align*}
	J''_4+J''_5\leq C\|\nabla\rho\|^{\frac 12}_{L^2}\|\nabla u\|^{\frac14}_{L^2}\|\nabla^3 u\|^{\frac34}_{L^2}\|\nabla^2\rho\|^{\frac 32}_{L^2}\leq C(1+t)^{-\frac34}\|\nabla^3 u\|^{\frac34}_{L^2}\|\nabla^2\rho\|^{\frac 32}_{L^2}.
\end{align*}
Hence, using the above estimate yields 
\begin{align}\label{h1'}
	\kappa(1+t)(J''_4+J''_5)\leq C\kappa(1+t)^{\frac 14}\|\nabla^3 u\|^{\frac34}_{L^2}\|\nabla^2\rho\|^{\frac 32}_{L^2}\leq C\kappa^2(1+t)\|\nabla^3u\|^2_{L^2}+C\kappa^{\frac25}\|\nabla^2\rho\|^2_{L^2},
\end{align}
and
\begin{align}
	\kappa(1+t)J''_6&\leq C\kappa(1+t)^{\frac 34}\|\nabla^3u\|^2_{L^2}\leq \frac{\kappa(1+t)} {100}\|\nabla^3u\|^2_{L^2}.
\end{align}
Similar to  the estimate of $\kappa(1+t)J^{''}_3$, we infer that
\begin{align}
		\kappa(1+t)(J''_7+J^{''}_8)\leq C\kappa^2(1+t)\Big(\|\nabla^3u\|^2_{L^2}+\|\nabla^2\nabla_Rg\|^2_{L^2(\mathcal{L}^2)}\Big)+\frac{1}{100}\|\nabla^2\rho\|^{2}_{L^2}.
\end{align}
For the estimates of $	\kappa(1+t)(\sum\limits_{i=9}^{12}J''_i)$, one has
\begin{align}
	\kappa(1+t)(J''_9+J''_{10}+J''_{11})
	&\leq C\kappa(1+t)^{\frac 12}\|\nabla^3u\|_{L^2}\Big(\|\nabla^2u\|_{L^2}+\|\nabla^2\rho\|_{L^2}\Big)\\\notag
	&\leq\Big(C\kappa+\frac 1{100}\Big)\mathcal{D}_2+C\|\nabla^2u\|^2_{L^2},\\ 
	\kappa(1+t)J''_{12}\leq C\kappa(1+t)^{\frac 34}
	&\|\nabla^3u\|^{\frac 32}_{L^2}\|\nabla^2u\|^{\frac 12}_{L^2}\leq C\kappa^{\frac 13}\mathcal{D}_2+C\|\nabla^2u\|^2_{L^2}.
	\label{o} 
\end{align}
Making use of \eqref{decaypug}, we get $$\|\nabla\rho\|^{\frac 12}_{L^2}+\|\nabla^2\rho\|^{\frac 14}_{L^2}\|\nabla g\|^{\frac 14}_{L^2(\mathcal{L}^2)}\leq C\mathcal{E}_1^{\frac 14}\leq C(1+t)^{-\frac 12}.$$ Consequently, using H{\"o}lder's inequality, we arrive at 
\begin{align}\label{o0}
		\kappa(1+t)J''_{13}
	&\leq C\kappa(1+t)\|\nabla\rho\|^{\frac 12}_{L^2}\|\nabla^2\rho\|^{\frac 12}_{L^2}\|\nabla g\|^{\frac 14}_{L^2(\mathcal{L}^2)}\|\nabla \nabla_Rg\|^{\frac 14}_{L^2(\mathcal{L}^2)}\|\nabla^2\nabla_Rg\|^{\frac 12}_{L^2(\mathcal{L}^2)}\|\nabla^3u\|_{L^2}\\ \notag
	& \leq C\kappa(1+t)^{\frac 12}\|\nabla^2\rho\|^{\frac 12}_{L^2}\|\nabla g\|^{\frac 14}_{L^2(\mathcal{L}^2)}\|\nabla \nabla_Rg\|^{\frac 14}_{L^2(\mathcal{L}^2)}\|\nabla^2\nabla_Rg\|^{\frac 12}_{L^2(\mathcal{L}^2)}\|\nabla^3u\|_{L^2}\\ \notag
	& \leq \frac{\kappa(1+t)}{100}\|\nabla^3u\|^2_{L^2}+C\kappa \|\nabla^2\rho\|_{L^2}\|\nabla g\|^{\frac 12}_{L^2(\mathcal{L}^2)}\|\nabla \nabla_Rg\|^{\frac 12}_{L^2(\mathcal{L}^2)}\|\nabla^2\nabla_Rg\|^{\frac 12}_{L^2(\mathcal{L}^2)}\\ \notag
	& \leq \frac{\kappa(1+t)}{100}\|\nabla^3u\|^2_{L^2}+\frac{\kappa(1+t)}{100}\|\nabla^2\nabla_Rg\|^2_{L^2(\mathcal{L}^2)}\\ \notag
	&~~~+C\kappa \|\nabla^2\rho\|^2_{L^2}\|\nabla g\|_{L^2(\mathcal{L}^2)}\|\nabla \nabla_Rg\|_{L^2(\mathcal{L}^2)}\\ \notag
	& \leq \frac{\kappa(1+t)}{100}\|\nabla^3u\|^2_{L^2}+\frac{\kappa(1+t)}{100}\|\nabla^2\nabla_Rg\|^2_{L^2(\mathcal{L}^2)}+\frac{1}{100}\|\nabla^2\rho\|^2_{L^2}\\ \notag
	&~~~+C\|\nabla^2\rho\|^2_{L^2}\|\nabla g\|^2_{L^2(\mathcal{L}^2)}\|\nabla \nabla_Rg\|^2_{L^2(\mathcal{L}^2)}\\ \notag
	&\leq \frac{1}{100}\mathcal{D}_2(t)+C(1+t)^{-4}\|\nabla \nabla_Rg\|^{2}_{L^2(\mathcal{L}^2)}.
\end{align}
where we have used the inequality $$\|\nabla^2\rho\|^{2}_{L^2}\|\nabla g\|^{2}_{L^2(\mathcal{L}^2)}\|\nabla \nabla_Rg\|^{2}_{L^2(\mathcal{L}^2)}\leq C(1+t)^{-4}\|\nabla \nabla_Rg\|^{2}_{L^2(\mathcal{L}^2)}.$$
According to  \eqref{decaypug}, we deduce rhat $$\|\nabla g\|^{\frac 12}_{L^2(\mathcal{L}^2)}\|\nabla^2u\|^{\frac 12}_{L^2}\leq C\mathcal{E}_1^{\frac 12}\leq C(1+t)^{-1},~~\|g\|_{L^{\infty}(\mathcal{L}^2)}\leq C\mathcal{E}_0^{\frac 14}\mathcal{E}_1^{\frac 14}\leq C(1+t)^{-\frac 34},$$  
from which one can have
\begin{align}
	\kappa(1+t)(J''_{14}+J''_{15})&\leq C\kappa\Big(\|\nabla^3u\|^{\frac12}_{L^2}\|\nabla^2\nabla_Rg\|^{\frac32}_{L^2(\mathcal{L}^2)}\Big)\\ \notag
	&\leq \frac{\kappa(1+t)}{100}\Big(\|\nabla^3u\|^{2}_{L^2}+\|\nabla^2\nabla_Rg\|^{2}_{L^2(\mathcal{L}^2)} \Big),
\end{align}
and
\begin{align}
		\kappa(1+t)J''_{16}\leq C\kappa(1+t)^{\frac 14}\|\nabla^3u\|_{L^2}\|\nabla^2\nabla_Rg\|_{L^2(\mathcal{L}^2)}\leq \frac{\kappa(1+t)}{100}\Big(\|\nabla^3u\|^{2}_{L^2}+\|\nabla^2\nabla_Rg\|^{2}_{L^2(\mathcal{L}^2)} \Big).
\end{align}
For the estimates of $	\kappa(1+t)(\sum\limits_{i=17}^{19}J''_i)$, one can deduce that
\begin{align}\label{h89}
	\kappa(1+t)(\sum\limits_{i=17}^{19}J''_i)&\leq C\kappa\|\nabla^3u\|^{\frac 12}_{L^2}\|\nabla^2\nabla_Rg\|^{\frac 32}_{L^2(\mathcal{L}^2)}\\ \notag
	&\leq \frac{\kappa(1+t)}{100}\Big(\|\nabla^3u\|^{2}_{L^2}+\|\nabla^2\nabla_Rg\|^{2}_{L^2(\mathcal{L}^2)} \Big).
\end{align}
Combining \eqref{h0}-\eqref{h89}, we obtain 
\begin{align*}
	\kappa(1+t)\frac{d}{dt}E_2(t)+\kappa(1+t)D_2\leq
	&\Big(C\kappa+C\kappa^{\frac13}+\frac14\Big)\mathcal{D}_2+C\|\nabla^2u\|^2_{L^2}+C(1+t)^{-4}\|\nabla \nabla_Rg\|^{2}_{L^2(\mathcal{L}^2)}.
\end{align*}
According to \eqref{b7}, we have
\begin{align}\label{o7}
	(\nabla^3u,F)
	&\leq CE^{\frac 12}_0\|\nabla^3u\|_{L^2}(\|\nabla^2u\|^{\frac 14}_{L^2}\|\nabla^2\rho\|^{\frac34}_{L^2}+\|\|\nabla^2u\|^{\frac 34}_{L^2}\|\nabla^2\rho\|^{\frac14}_{L^2})\\ \notag
	&\leq C{\kappa^2(1+t)}\|\nabla^3u\|^2_{L^2}+\frac{C\kappa}{4}\|\nabla^2\rho\|^2_{L^2}+C\|\nabla^2u\|^2_{L^2}\leq C\kappa\mathcal{D}_2+C\|\nabla^2u\|^2_{L^2}.
\end{align}
 According to \eqref{b7'} and \eqref{b7''}, for $t$ large enough, we deduce that
\begin{align}
	\|\nabla^2\rho\|_{L^2}\|\rho\|_{L^{\infty}}\Big(\|\nabla^3 u\|_{L^2}+\|\nabla^2\rho\|_{L^2}\Big)
	&\leq C\kappa^2(1+t)\|\nabla^3 u\|^2_{L^2}+C\|\rho\|_{L^{\infty}}\|\nabla^2\rho\|^2_{L^2}\\ \notag
	&\leq C\kappa\mathcal{D}_2+C\kappa\|\nabla^2\rho\|^2_{L^2},\\ 
	\|\nabla^2\rho\|_{L^2}(\|\rho\|_{L^{\infty}}\|\nabla^2 \tau\|_{L^2}+\|u\|_{L^{\infty}}\|\nabla^2u\|_{L^2})
	&\leq \frac18\|\nabla^2\rho\|^2_{L^2}+C\kappa\mathcal{D}_2+C\|\nabla^2u\|^2_{L^2},\label{o6}
\end{align}
and
\begin{align}\label{o5}
	(\nabla{\rm div}\tau,\nabla^2\rho)+(\nabla{\rm div}\Sigma(u),\nabla^2\rho)
	&\leq \|\nabla^2\rho\|_{L^2}\Big(\|\nabla^2\tau\|_{L^2}+\|\nabla^3u\|_{L^2}\Big)\leq \frac18 \|\nabla^2\rho\|^2_{L^2}+ C\kappa\mathcal{D}_2.
\end{align}
According to \eqref{o7}-\eqref{o5}, we have
\begin{align}\label{ineq90}
	\frac{d}{dt}(\nabla u,\nabla^2\rho)+\frac12 \|\nabla^2\rho\|^2_{L^2}\leq C\kappa\mathcal{D}_2+C\|\nabla^2u\|^2_{L^2}.
\end{align}
Making use of Lemma \ref{intef} and Proposition \ref{decayp}, we obtain
\begin{align*}
	\|\nabla^2 u\|^2_{L^2}\leq C\|u\|^{\frac 2 3}_{L^2}\|\nabla^3u\|^{\frac 4 3}_{L^2}\leq C(1+t)^{-2}\|u\|^2_{L^2}+\frac{\kappa(1+t)}{4}\|\nabla^3u\|^2_{L^2}.
\end{align*}
Therefore, we get
\begin{align}\label{h11}
	\frac{d}{dt}\mathcal{E}_2(t)+2\mathcal{D}_2(t)&\leq
	\kappa\Big(\|\nabla^2\rho\|^2_{L^2}+\|\nabla^2u\|^2_{L^2}+\|\nabla^2\nabla_Rg\|^2_{L^2(\mathcal{L}^2)}\Big)\\ \notag
	&~~~ +\Big(C\kappa+C\kappa^{\frac13}+\frac12\Big)\mathcal{D}_2+C(1+t)^{-3}+C(1+t)^{-4}\|\nabla \nabla_Rg\|^{2}_{L^2(\mathcal{L}^2)},
\end{align}
from which one has
\begin{align*}
	\frac{d}{dt} \mathcal{E}_2(t)+\mathcal{D}_2(t) &\leq\kappa\Big(\|\nabla^2\rho\|^2_{L^2}+\|\nabla^2u\|^2_{L^2}+\|\nabla^2\nabla_Rg\|^2_{L^2(\mathcal{L}^2)}\Big)\\ \notag
	&~~~+ C(1+t)^{-3}+C(1+t)^{-4}\|\nabla \nabla_Rg\|^{2}_{L^2(\mathcal{L}^2)}.
\end{align*}
By using the Fourier splitting method and using Proposition \ref{opdecay}, we have
\begin{align}\label{hh}
	&(1+t)^3\mathcal{E}_2+\int_0^t\kappa (1+t')^3\Big(\|\nabla^2(\rho,u)\|^2_{L^2}+\kappa(1+t')\|\nabla^2\nabla_Rg\|^2_{L^2(\mathcal{L}^2)}\Big)dt'\\ \notag
	&\leq C(1+t)+C\int_0^t(1+t')^{-1}\|\nabla \nabla_Rg\|^{2}_{L^2(\mathcal{L}^2)}dt'+C\int_0^t(1+t')^2(\nabla u,\nabla^2\rho)dt'\\ \notag 
	&\leq C(1+t)+\frac{\kappa}{2}\int_0^t(1+t')^3\|\nabla^2\rho\|^2_{L^2}dt'+C\int_0^t(1+t')\|\nabla u\|^2_{L^2}dt'\\\notag 
	&\leq C(1+t) +\frac{\kappa}{2}\int_0^t(1+t')^3\|\nabla^2\rho\|^2_{L^2}dt',
\end{align}
from which one can deduce that
\begin{align*}
	(1+t)^3\mathcal{E}_2(t)+\int_0^t\kappa (1+t')\Big(\|\nabla^2(\rho,u)\|^2_{L^2}+\kappa(1+t')^3\|\nabla^2\nabla_Rg\|^2_{L^2(\mathcal{L}^2)}\Big)dt'\leq C(1+t).
\end{align*}
That is
\begin{align}\label{H}
	\|\nabla^2(\rho,u)\|^2_{L^2}+\|\nabla^2g\|^2_{L^2(\mathcal{L}^2)}\leq C(1+t)^{-3}.
\end{align}
Next we give the decay rate of $\|\nabla g\|^2_{L^2(\mathcal{L}^2)}.$ It follows  from  Lemma \ref{lemma2} that
\begin{align*}
	\frac{d}{dt}\|\nabla g\|^2_{L^2(\mathcal{L}^2)}+\|\nabla\nabla_R g\|^2_{L^2(\mathcal{L}^2)}&\leq C\|\nabla^2u\|^2_{L^2}+\|\nabla u\|^2_{L^4}\|\nabla g\|^2_{L^4(\mathcal{L}^2)}\\ \notag
	&~~~+\|g\|^2_{L^{\infty}(\mathcal{L}^2)}\|\nabla^2u\|^2_{L^2}+\|u\|^2_{L^\infty}\|\nabla g\|^2_{L^2(\mathcal{L}^2)}.
\end{align*} 
By applying  Lemmas \ref{intef}, \ref{Lemma1} yields
\begin{align*}
	\frac{d}{dt}\Big(e^{ct}\|\nabla g\|^2_{L^2(\mathcal{L}^2)}\Big)\leq Ce^{ct}\Big(\|\nabla^2u\|^2_{L^2}+E_1E_2+E^{\frac 12}_0E^{\frac 32}_2+E^{\frac 12}_0E^{\frac 32}_2\Big).
\end{align*} 
It can deduce from \eqref{H} and Proposition \ref{opdecay} that
\begin{align*}
	\|\nabla g\|^2_{L^2(\mathcal{L}^2)}\leq Ce^{-ct}\|\nabla g_0\|^2_{L^2(\mathcal{L}^2)}+C\int_0^te^{-c(t-t')}(1+t')^{-3}dt',
\end{align*}
from which one can arrive at
\begin{align*}
	\|\nabla g\|^2_{L^2(\mathcal{L}^2)}\leq C(1+t)^{-3}.
\end{align*}
Therefore, we complete the proof of  Proposition \ref{highdecay}.
\end{proof}
We now come to the proof of optimal decay estimates of solutions for \eqref{eq1}.\\
\textbf{Proof of Theorem \ref{opdecay'}:}
\begin{proof}
	Combining Propositions \ref{opdecay} and  \ref{highdecay}, we finish the proof of Theorem \ref{opdecay'}.
\end{proof}
\smallskip
\noindent\textbf{Acknowledgments} This work was partially supported by the National Natural Science Foundation of China (No. 12171493), the China Postdoctoral Science Foundation (No. 2022TQ0077 and No. 2023M730699) and Shanghai Post-doctoral Excellence Program (No. 2022062).

\noindent\textbf{Data Availability.}
The data that support the findings of this study are available on citation. The data that support the findings of this study are also available
from the corresponding author upon reasonable request.

	\phantomsection
	\addcontentsline{toc}{section}{\refname}
	\bibliographystyle{abbrv} 
	\bibliography{Feneref00.1}
\end{document}